\documentclass[letterpaper,11pt]{amsart}

\usepackage[T1]{fontenc}
\usepackage[utf8]{inputenc}
\usepackage[english]{babel}
\usepackage{a4wide}
\usepackage{amsfonts}
\usepackage{amssymb,,amsmath}
\usepackage[dvipsnames]{xcolor}
\usepackage{pgfplots}
\usepackage{graphicx}
\usepackage{stmaryrd}
\usepackage{bbm} 
\usepackage{amsthm}
\usepackage{float} 
\usepackage{hyperref}
\usepackage{comment}
\hypersetup{
    colorlinks=true,       
    linkcolor=red,          
    citecolor=blue,        
    filecolor=magenta,      
    urlcolor=cyan           
}

\newtheoremstyle{note}{} {}{\itshape}{-6pt}{\bf}{. --}{ }{}

\theoremstyle{note}
\newtheorem{theorem}{Theorem}
\newtheorem{proposition}{Proposition}[section]
\newtheorem{definition}[proposition]{Definition}
\newtheorem{lemma}[proposition]{Lemma}

\numberwithin{equation}{section}

\newenvironment{remark}{{\noindent\bf Remark }}{}
\newenvironment{remarks}{{\noindent\bf Remarks }}{}

\usepackage{tikz}
\usetikzlibrary{calc}

\DeclareMathOperator{\diam}{diam}
\DeclareMathOperator{\dist}{dist}
\DeclareMathOperator{\Div}{div}

\DeclareMathOperator{\Int}{Int}

\DeclareMathOperator{\Ran}{Ran}

\DeclareMathOperator{\Ric}{Ric}

\DeclareMathOperator{\vol}{vol}

\title[Spectral estimates on hyperbolic surfaces]{Spectral estimates on hyperbolic surfaces and a necessary condition for observability of the heat semigroup on manifolds}
\author{Alix Deleporte, Marc Rouveyrol}
\address{Laboratoire de Math\'ematiques d'Orsay, Universit\'e Paris-Saclay, CNRS, B\^atiment~307, 91405 Orsay Cedex}
\email{alix.deleporte@universite-paris-saclay.fr}
\email{marc.rouveyrol@universite-paris-saclay.fr}

\begin{document}

\begin{abstract}
This article is a continuation of \cite{rouveyrol2023spectralH2}. We study the concentration properties of spectral projectors on manifolds, in connection with the uncertainty principle.
In \cite{rouveyrol2023spectralH2}, the second author proved an optimal uncertainty
principle for the spectral projector of the Laplacian on the
hyperbolic half-plane. The aim of the present work is to generalize
this condition to surfaces with hyperbolic ends. In particular, we
tackle the case of cusps, in which the volume of balls of fixed radius
is not bounded from below. We establish that spectral estimates hold
from sets satisfying a thickness condition, with a proof based on
propagation of smallness estimates of Carleman and
Logunov--Malinnikova type. We also prove the converse, namely the necessary
character of the thickness condition, on any smooth manifold with Ricci
curvature bounded from below.
\end{abstract}

\keywords{spectral estimates, propagation of smallness, hyperbolic surfaces, thick sets, heat equation}

\maketitle
\tableofcontents

\newpage

\section{Introduction and main result}

\subsection{Main result}

\subsubsection{Notation and main results}

On a non-compact Riemannian manifold $(M,g)$ without boundary, endowed with a smooth metric $g$, consider the Laplace--Beltrami operator $-\Delta_g$ with domain $C^\infty_c(M)$. The corresponding quadratic form is $q: D(q) \rightarrow \mathbb{R}$ given by $$D(q) = H^1(M) = \{u \in L^2(M), \nabla_g u \in L^2(M)\}, \quad q(u) = \int_M |\nabla_g u|_g^2 d\vol_g.$$ We denote by $-\Delta_g$ (with the convention $\Delta_g \leq 0$) the Friedrichs extension of $(-\Delta_g, C^\infty_c(M))$, with domain $D(-\Delta_g) = \{u \in H^1(M), \Delta_g u \in L^2(M)\}$. We refer to \cite{reedsimon1980methodsvol1} and \cite{lewin2022livrespectral} for spectral-theoretic considerations.

The Laplace--Beltrami operator is non-negative and self-adjoint, so
that its square root $\sqrt{-\Delta_g}$ is well-defined and admits a
spectral measure $dm_\Lambda$. Any function $u \in L^2(M)$ can be decomposed against this spectral measure and $\sqrt{-\Delta_g}$ admits a functional calculus: \begin{multline}
\label{eq: spectral measure}
u = \int_0^{+\infty} dm_\lambda u, \quad \phi(\sqrt{-\Delta_g})u =
\int_0^{+\infty} \phi(\lambda)dm_\lambda u\\
\langle \phi(\sqrt{-\Delta_g})u,
\varphi(\sqrt{-\Delta_g})u\rangle_{L^2(M)} = \int_0^{+\infty}
\phi(\lambda) \varphi(\lambda) \langle dm_\lambda u, u
\rangle\end{multline} for any $u \in L^2(M)$, $\phi, \varphi \in \mathcal{L}^\infty(\mathbb{R}^+)$. We then define the spectral projector on the frequency window $[0,\Lambda]$, $\Lambda > 0$, by \begin{equation}
    \label{eq: spectral projector} \Pi_\Lambda u = \mathbbm{1}_{\sqrt{-\Delta_g} \leq \Lambda} u = \int_0^\Lambda dm_\lambda u,
\end{equation} so that \begin{equation}
    \label{eq: functional calculus bound}
    \|\phi(\sqrt{-\Delta_g})\Pi_\Lambda u\|_{L^2(M)} \leq \sup_{\lambda \in [0,\Lambda]} |\phi(\lambda)| \|\Pi_\Lambda u\|_{L^2(M)}.
\end{equation}

The present article aims at studying uncertainty principles for spectrally localized functions in $\Ran(\Pi_\Lambda)$, in the specific case where $M$ is a hyperbolic surface, or a surface with hyperbolic ends. It is a natural sequel of the article \cite{rouveyrol2023spectralH2} by the second author, which dealt with the case of the hyperbolic half-plane $M = \left(\mathbb{H}, \frac{dx^2 + dy^2}{y^2}\right)$, for hyperbolic surfaces are quotients of $\mathbb{H}$ by suitable isometry groups. To avoid diluting the main theorems in geometric discussions, we next state the results of the article, and postpone definition of the precise geometric framework to the next subsection.

The uncertainty principle we prove takes the form of an inequality in the spirit of Logvinenko and Sereda's spectral projector estimates \cite{logvinenkosereda1974equivalent,kovrizhkin2000fourier,kovrizhkin2001relatedtoLog-Ser}. Precisely, we say that a Borel subset $\omega \subset M$ satisfies a \textit{spectral estimate} if there exists some constant $C(\omega)$, independent of $u$ and of the frequency threshold $\Lambda$, such that \begin{equation}
    \label{eq: spec} \tag{Spec}
    \forall u \in L^2(M), \forall \Lambda > 0, \quad \|\Pi_\Lambda u\|_{L^2(M)} \leq C e^{C\Lambda} \|\Pi_\Lambda u\|_{L^2(\omega)}.
\end{equation}

In \cite{rouveyrol2023spectralH2}, we proved that in the case $M = \mathbb{H}$, the spectral estimate is equivalent to the so-called \textit{thickness} condition:

\begin{definition}
\label{ass: renormalized thickness condition}
Let $R > 0$. We say that $\omega \subset M$ is \emph{thick at scale $R$} (or $R$-thick) if there exists some volume ratio $\delta(R) > 0$ such that \begin{equation}
    \label{eq: thickness} \tag{Thick}
    \forall z \in M, \frac{\vol(B_z(R) \cap \omega)}{\vol(B_z(R))} \geq \delta.
\end{equation} We shall say that $\omega$ is \emph{thick} if there exists some $R > 0$ such that $\omega$ is $R$-thick.
\end{definition}

The first of our results states that thickness of $\omega$ is sufficient for a spectral estimate to hold:

\begin{theorem}
    \label{theo: main result, sufficient part}
    Assume $M$ is a surface with hyperbolic ends. Take $\omega$ a Borel subset of $M$. If $\omega$ satisfies \eqref{eq: thickness} for some parameters $(R,\delta)$, then $\omega$ satisfies \eqref{eq: spec} for some constant $C(\omega)$.
\end{theorem}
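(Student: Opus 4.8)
The plan is to reduce the estimate \eqref{eq: spec} on a surface with hyperbolic ends to a combination of (i) interior propagation-of-smallness estimates valid on any manifold of bounded geometry, and (ii) a separate treatment of the cuspidal ends, where the volume of balls degenerates. The overall scheme mimics the structure of \cite{rouveyrol2023spectralH2}, but the non-compactness and, above all, the cusps are the new difficulties.

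\textbf{Step 1: Reduction to an elliptic problem in one extra dimension.} As is classical for Logvinenko--Sereda-type spectral estimates, I would lift $u_\Lambda := \Pi_\Lambda u$ to the function $v(x,t) = \frac{\sinh(t\sqrt{-\Delta_g})}{\sqrt{-\Delta_g}} u_\Lambda$ (or $\cosh(t\sqrt{-\Delta_g})u_\Lambda$), which solves the elliptic equation $(\partial_t^2 + \Delta_g) v = 0$ on $M \times (-T,T)$ with $v|_{t=0} = 0$, $\partial_t v|_{t=0} = u_\Lambda$. The functional calculus bound \eqref{eq: functional calculus bound} controls $\|v(\cdot,t)\|_{L^2(M)}$ and its $t$-derivatives by $e^{C\Lambda}\|u_\Lambda\|_{L^2(M)}$, so a lower bound on $v$ (or $\partial_t v$) near $t=0$ in terms of $\|u_\Lambda\|_{L^2(\omega)}$, combined with a three-balls / doubling inequality for the elliptic operator $\partial_t^2 + \Delta_g$, will propagate smallness from $\omega \times (-\varepsilon,\varepsilon)$ to all of $M \times \{0\}$ and yield \eqref{eq: spec}. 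The key quantitative inputs are Carleman estimates and the Logunov--Malinnikova propagation-of-smallness machinery for solutions of second-order elliptic equations, which hold with uniform constants as long as the metric has bounded geometry (bounded curvature and positive injectivity radius).

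\textbf{Step 2: Covering and the cusps.} A surface with hyperbolic ends decomposes into a compact core plus finitely many ends, each isometric to a standard funnel or cusp. On the core and on the funnel ends, the geometry is uniformly of bounded type, balls of radius $R$ have volume bounded above and below, and \eqref{eq: thickness} directly feeds a uniform chain-of-balls argument propagating the lower bound across the whole region with an exponential-in-$\Lambda$ loss. The genuinely new point is the cusp $\{(x,y) : y \geq y_0\}$ with metric $\frac{dx^2+dy^2}{y^2}$ and $x \in \mathbb{R}/\mathbb{Z}$, where $\vol(B_z(R)) \to 0$ as $y \to \infty$: here \eqref{eq: thickness} forces $\omega$ to occupy a fixed \emph{fraction} of each horocyclic slab, and I would exploit the explicit structure (separation of variables in $x$, so that a spectrally localized function has only $O(\Lambda)$ Fourier modes in $x$ that are not exponentially damped in $y$) to prove a one-dimensional Logvinenko--Sereda / Turán-type inequality in the $x$-variable, uniformly in $y$. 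This replaces the volume-lower-bound step that fails in the cusp.

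\textbf{Step 3: Patching.} Finally I would glue the core/funnel estimate and the cusp estimate with a partition of unity adapted to the decomposition, absorbing commutator terms (which are first order, hence lower order relative to the $e^{C\Lambda}$ scale) into the right-hand side, and concluding \eqref{eq: spec} with a constant $C(\omega)$ depending only on $(R,\delta)$, the geometry of $M$, and the number of ends. The main obstacle I anticipate is Step 2 in the cusp: making the propagation of smallness uniform as $y \to \infty$ despite the shrinking injectivity radius, which is why one cannot use the generic bounded-geometry argument there and must instead use the exact hyperbolic structure of the cusp together with the spectral localization to reduce to a clean one-variable statement.
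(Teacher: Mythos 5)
Your Step 1 (pass to the harmonic extension solving $(\partial_t^2+\Delta_g)v=0$ and run Logunov--Malinnikova on the resulting elliptic problem) is exactly the paper's starting point. Your treatment of the core and funnels is also broadly in line, though the paper actually handles the compact core not by a chain-of-balls argument fed by thickness inside the core, but by a Carleman-type propagation-of-smallness \emph{from an open collar of the boundary} (Lemma \ref{lem:prop_smallness_compact}, using Theorems \ref{theo: Carleman-based propagation of smallness} and \ref{theo: Carleman boundary}), so that the estimate on the core is driven purely by information already obtained in the ends. Your gluing step also differs in flavour: the paper never introduces a partition of unity with commutators; it simply sums the $L^2$ three-balls estimates over a bounded-overlap cover and applies H\"older, which is safer given the multiplicative form of the estimates.

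The genuine divergence is in the cusps, and it matters. The paper's key new technical move (Lemma \ref{lem:propag_smallness_cusps}) stays entirely in the "unroll to $\mathbb{H}$, cover by balls" framework: the fundamental domain of a cusp is covered by hyperbolic balls of fixed radius, these balls spill over an unbounded number $N$ of copies of the domain, and the proof proceeds by showing that the $L^2$ norm of the periodic lift over a ball in $\mathbb{H}$ is comparable, \emph{with constants linear in $N$ on both sides}, to the $L^2$ norm of the original function on a slightly smaller ball in the cusp; the $N$'s then cancel, and the thickness hypothesis (now with the $\vol(B_z(R))$ denominator) feeds the mass lower bound. Your proposal instead exploits separation of variables in the circle direction and a one-dimensional Logvinenko--Sereda/Tur\'an inequality, arguing that only $O(\Lambda)$ Fourier modes in $\theta$ survive far into the cusp. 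This is a plausible but substantially different and unfinished route: the spectral localization is a global statement on $M$, so controlling the $\theta$-Fourier content of $\Pi_\Lambda u$ (or of its harmonic extension) on a horocycle is not immediate; one would have to push the elliptic equation through the Fourier decomposition and establish quantitative Bessel-type decay of the nonzero modes, uniformly over the spectral window and over possibly infinite-area surfaces where the spectral measure is continuous. That machinery would have to be built; the paper deliberately sidesteps it via the periodic-lift/ball-counting argument, which recycles the single local Lemma \ref{lemma: propagation of smallness ball} uniformly. So your outline identifies the right obstruction (shrinking injectivity radius, vanishing $\vol(B_z(R))$) but replaces the paper's elementary geometric fix with a harder harmonic-analytic program that is not carried out.
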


Section \ref{sec: proof sufficient condition} is dedicated to
the proof of Theorem \ref{theo: main result, sufficient part}. The next theorem states the converse of Theorem \ref{theo: main result, sufficient part}, namely that the thickness assumption is also necessary for a spectral estimate to hold. Only a Ricci curvature lower bound is required, making the assumptions on $M$ much weaker.

\begin{theorem}
    \label{theo: main result, necessary part}
    Let $(M, g)$ be a complete, boundaryless, smooth
    Riemannian manifold, not necessarily compact, of arbitrary dimension, with Ricci curvature bounded from below: \begin{equation}
        \label{eq: Ricci curvature bound, introduction}
        \Ric \geq -Kg, \quad K > 0.
    \end{equation} in the sense of quadratic forms. If $\omega$ satisfies \eqref{eq: spec}, then $\omega$ is thick at scale $R$ for some sufficiently large $R > 0$.
\end{theorem}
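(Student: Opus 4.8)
We argue by contraposition. By Bishop--Gromov an $R$-thick set is $R'$-thick for every $R'\ge R$ (the ratio $\vol(B_z(R'))/\vol(B_z(R))$ is bounded $z$-independently), so the negation of the conclusion reads: \emph{for every $R>0$}, $\inf_{z\in M}\vol(B_z(R)\cap\omega)/\vol(B_z(R))=0$. Fix a heat time $t_0>0$ (say $t_0=1$), set $r_0=\sqrt{t_0}$, let $p_t(x,y)$ be the heat kernel of $-\Delta_g$, and for $z\in M$ take the test function $u_z:=p_{t_0}(z,\cdot)=e^{t_0\Delta_g}\delta_z\in L^2(M)$, for which $\|u_z\|_{L^2(M)}^2=p_{2t_0}(z,z)>0$. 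The plan is to feed $u_z$ into \eqref{eq: spec} and show that if $z$ is chosen so that $B_z(Nr_0)$ meets $\omega$ in small enough relative volume, with $N$ a large integer, the inequality fails.

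Three inputs are needed, all drawn from the Li--Yau two-sided on-diagonal and Gaussian heat kernel bounds valid under $\Ric\ge-Kg$, combined with Bishop--Gromov comparison; crucially these are applied only at the \emph{fixed} times $t_0,\tfrac32 t_0,2t_0$, so that all curvature corrections $e^{C(K)t}$ they carry are harmless constants $C(K,n,t_0)$. (i) \emph{Spectral tail.} Writing $u_z=e^{t_0\Delta_g/2}p_{t_0/2}(z,\cdot)$ and using the functional calculus of \eqref{eq: spectral measure}, one finds $\|(1-\Pi_\Lambda)u_z\|_{L^2(M)}^2\le e^{-t_0\Lambda^2/2}\,p_{3t_0/2}(z,z)\le C(K,n,t_0)\,e^{-t_0\Lambda^2/2}\,\|u_z\|_{L^2(M)}^2$, uniformly in $z$; hence $\|(1-\Pi_\Lambda)u_z\|_{L^2(M)}\le\beta(\Lambda)\|u_z\|_{L^2(M)}$ with $\beta(\Lambda)\to0$ like $e^{-t_0\Lambda^2/4}$. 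In particular $\|\Pi_\Lambda u_z\|\ge\tfrac12\|u_z\|$ for $\Lambda$ large, while $\|\Pi_\Lambda u_z\|_{L^2(\omega)}\le\|u_z\|_{L^2(\omega)}+\beta(\Lambda)\|u_z\|$. (ii) \emph{Near part.} For $x\in B_z(Nr_0)$, the Gaussian upper bound and Bishop--Gromov give $p_{t_0}(z,x)^2\le C(K,n,N,t_0)\vol(B_z(r_0))^{-2}$, $\vol(B_z(Nr_0))\le C(K,n,N,t_0)\vol(B_z(r_0))$, and $\|u_z\|^2=p_{2t_0}(z,z)\ge c(K,n,t_0)\vol(B_z(r_0))^{-1}$, so that
\[
\int_{\omega\cap B_z(Nr_0)}p_{t_0}(z,x)^2\,dx\;\le\;C(K,n,N,t_0)\,\frac{\vol(\omega\cap B_z(Nr_0))}{\vol(B_z(Nr_0))}\,\|u_z\|_{L^2(M)}^2 .
\]
(iii) \emph{Far part.} Splitting $M\setminus B_z(Nr_0)$ into annuli $A_k=\{kr_0\le d(z,\cdot)<(k+1)r_0\}$, $k\ge N$, estimating $\int_{A_k}\vol(B_x(r_0))^{-1}\,dx$ by the covering number of $A_k$ at scale $r_0$ (controlled by Bishop--Gromov by $C(K,n)e^{C(K,n)kr_0}$) and using that $p_{t_0}(z,x)^2$ carries the Gaussian factor $e^{-2k^2/5}$ on $A_k$, one gets $\int_{M\setminus B_z(Nr_0)}p_{t_0}(z,x)^2\,dx\le\varepsilon_N\|u_z\|_{L^2(M)}^2$ with $\varepsilon_N\to0$ as $N\to\infty$, uniformly in $z$ — the Gaussian decay defeating the at-most-exponential volume growth.

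Putting these together, \eqref{eq: spec} applied to $u_z$ gives
\[
\tfrac12\|u_z\|\;\le\;\|\Pi_\Lambda u_z\|\;\le\;Ce^{C\Lambda}\Big(C'(K,n,N,t_0)\,\Big(\tfrac{\vol(\omega\cap B_z(Nr_0))}{\vol(B_z(Nr_0))}\Big)^{1/2}+C''(K,n,t_0)\,\varepsilon_N^{1/2}+\beta(\Lambda)\Big)\|u_z\| .
\]
We fix the parameters in order. First choose $\Lambda$ large, depending on the constant $C$ of \eqref{eq: spec} and on $K,n,t_0$, so that $Ce^{C\Lambda}\beta(\Lambda)<\tfrac18$ (possible since $e^{C\Lambda-t_0\Lambda^2/4}\to0$). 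With this $\Lambda$ fixed, choose $N$ large so that $Ce^{C\Lambda}C''(K,n,t_0)\varepsilon_N^{1/2}<\tfrac18$. With $N$ fixed, invoke the failure of thickness at scale $Nr_0$ to pick $z$ with $\vol(\omega\cap B_z(Nr_0))/\vol(B_z(Nr_0))$ small enough that the first term is $<\tfrac18$. The displayed inequality then gives $\tfrac12<\tfrac38$, a contradiction; hence $\omega$ is thick at the scale $R=Nr_0$.

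\textbf{Expected main obstacle.} The delicate point is the far estimate (iii): on a surface with hyperbolic ends the volume of balls grows exponentially, and in a cusp balls of fixed radius have arbitrarily small volume, so one cannot simply assert that the $L^2$-mass of $p_{t_0}(z,\cdot)$ sits inside a bounded ball around $z$ — this must be made quantitative, weighing the Gaussian decay against Bishop--Gromov's exponential bound on covering numbers, uniformly in $z$. Keeping the curvature corrections inert is precisely why one tests against $p_{t_0}(z,\cdot)$ at a single fixed heat time rather than rescaling the heat time with the thickness scale; the cost is the non-sharp spatial localization, which is swallowed by the order of quantifiers ($\Lambda$, then $N$, then the depth of non-thickness) so that the fixed exponential loss $Ce^{C\Lambda}$ in \eqref{eq: spec} is absorbed. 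One may equivalently first reduce \eqref{eq: spec} to observability of $e^{t\Delta_g}$ in some time $T$ via the Lebeau--Robbiano--Miller argument and run the same computation with $u_0=p_{s_0}(z,\cdot)$, $s_0$ fixed — the viewpoint reflected in the paper's title.
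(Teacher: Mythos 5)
Your proposal is correct and lands on the same core ingredients as the paper's proof: testing with a heat kernel at a fixed small time, two-sided Li--Yau Gaussian heat-kernel bounds, Bishop--Gromov volume doubling, and a near/far split in which the Gaussian decay of $p_{t_0}(z,\cdot)^2$ defeats the exponential growth of ball volumes. The genuine difference is the reduction step. The paper first turns \eqref{eq: spec} into a final-time observability inequality for $e^{t\Delta_g}$ (Lemma~\ref{lemma: observability}, via the Lebeau--Robbiano--Miller / Phung--Wang machinery) and then feeds $u_0=p(\cdot,z_0,t_K/2)$ into it, so that no spectral tail ever appears; you instead apply \eqref{eq: spec} directly to $u_z=p_{t_0}(z,\cdot)$ and replace the observability reduction by the smoothing bound $\|(1-\Pi_\Lambda)u_z\|\lesssim e^{-ct_0\Lambda^2}p_{t_0}(z,z)^{1/2}$ together with the on-diagonal comparison $p_{t_0}(z,z)\simeq p_{2t_0}(z,z)$, then fix $\Lambda$, then $N$, then the witness $z$. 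Both routes work; yours bypasses the observability constant $C_{obs}(T)$ entirely, at the cost of the contraposition/order-of-quantifiers bookkeeping, while the paper's route is more streamlined once Lemma~\ref{lemma: observability} is in hand and also yields $\delta$ explicitly in terms of $(R,n,K)$ rather than only its existence. Your near-part estimate, which uses $\vol(B_z(Nr_0))\le C(N)\vol(B_z(r_0))$ to pass from $\vol(B_z(r_0))$ to $\vol(B_z(Nr_0))$ in the denominator, is exactly the same use of volume doubling as in the paper's derivation of $M'_K$.

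One point to adjust: you fix $t_0=1$ and assert that the curvature corrections in the heat-kernel bounds are of the form $e^{C(K)t}$ and hence inert at fixed time. That is true for the standard Li--Yau Gaussian upper bound, but the version the paper quotes (\eqref{eq: heat kernel upper bound}) carries an additional factor $e^{C_2 d(z,z_0)^2}$, so the effective exponent is $-\tfrac{1}{5t_0}+C_2$ and must be strictly negative --- indeed $<-2C_D$, so that the Gaussian wins against the exponential-in-$d$ volume growth in your far-part estimate~(iii). This forces $t_0$ to be small, depending only on $(n,K)$; the paper takes $t_K=\tfrac{1}{10(C_2+2C_D)}$. Under the standard Gaussian bound your $t_0=1$ is fine, but under the bound as stated in the paper it is not; your phrase ``a single fixed heat time'' should read ``a single fixed \emph{small enough} heat time depending only on $(n,K)$''. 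With that adjustment the argument is complete.
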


Theorem \ref{theo: main result, necessary part} is proved in Section
\ref{sec: proof necessary condition}. A complete
bibliographical discussion of previous similar results and of elements
of proof will come in the next subsections of this introduction,
but we already make some comments about our main results. They are the object of the following remarks.

\bigskip

\begin{remarks}
    \begin{enumerate}
        \item As explained in the next subsection, surfaces with hyperbolic ends do satisfy the Ricci curvature lower bound $\Ric \geq -Kg$, $K > 0$. Consequently, Theorem \ref{theo: main result, necessary part} implies that the converse of Theorem \ref{theo: main result, sufficient part} holds.
        \item In both Theorem \ref{theo: main result, sufficient part}
          and Theorem \ref{theo: main result, necessary part}, the
          metric $g$ is assumed to be smooth. It is likely that this
          assumption could be weakened in Theorem \ref{theo: main
            result, necessary part}, as the natural framework in which
          the assumption \[\Ric(g) + Kg \geq 0\] is well-defined is
          that of Lipschitz metrics (with a distributional definition
          of the Ricci tensor). $\Ric(g) + Kg$ would then be a
          positive measure, implying no gain of regularity on $g$
          even in harmonic charts. However, the proof of Theorem \ref{theo: main result, necessary part} relies on heat kernel estimates which have only been proved for smooth metrics, to the best of the authors' knowledge.
        \item One interesting feature of surfaces with hyperbolic ends is that the denominator $\vol(B_z(R))$ which appears in the denominator of \eqref{eq: thickness} can tend to zero, in the asymptotic in which $z$ is sent to the end of a cusp. Thus, unlike in preceding works \cite{burqmoyano2021spectralestimates,rouveyrol2023spectralH2}, there is no uniform lower bound for the volume of $\omega$ inside a given ball of radius $R$.
        \item In a recent article \cite{lebalchmartin2024quantitative}, Kévin Le Balc'h and Jérémy Martin prove an equivalent of Theorem \ref{theo: main result, sufficient part} for operators of the form $-\Div(A(x) \nabla) + V(x), V \in L^\infty$, on the full flat space. Their proof, like ours, is based on Logunov--Malinnikova estimates for the gradients of solutions to elliptic equations \cite{logunovmalinnikova2018qtttve_propag_smallness}, in the spirit of Burq and Moyano \cite{burqmoyano2021spectralestimates,burqmoyano2023propagationheat}.
        It would be interesting to combine the two techniques to
        treat operators of the form $-\Delta_g + V$, with $V \in L^\infty(M)$.
        \item The following volume doubling estimate holds on any
        Riemannian manifold with Ricci curvature bounded from below
        and is a consequence of the Bishop--Gromov inequality        \cite{cheegergromovtaylor1982kernelestimates,gallothulinlafontaine2004book}:
        there exists a constant $C_D > 0$ such that for any $z \in
        M, R > 0$, \begin{equation}\label{eq: volume doubling intro}\vol(B_z(2R)) \leq C_D e^{C_D R}
        \vol(B_z(R)).\end{equation} The doubling constant $C_D$ depends only on
        the dimension of the manifold and the curvature lower
        bound. The $e^{C_D R}$ factor is obtained by
        comparison theorems between the growth of balls on manifolds
        with Ricci curvature bounded from below and on the
        hyperbolic space with constant curvature equal to the lower
        bound, on which it is the correct asymptotic for large $R$. Since the critical mass ratio $\delta$ of Definition \ref{ass: renormalized thickness condition} is allowed to depend on $R$, $R_0$-thickness of $\omega$ for some $R_0 > 0$ implies $R$-thickness for any $R > R_0$.
        \item In a manifold with infinite diameter, a thick set cannot be bounded.
        \item By the volume doubling estimate above, if $\omega$ is $R$-thick and $K$ is a compact subset of $\omega$, then $\omega \setminus K$ remains thick (typically at scale $\diam(K) + 2R$).
    \end{enumerate}
\end{remarks}

\subsubsection{Hyperbolic surfaces and hyperbolic ends}
\label{sec: explanations hyperbolic}

We now give all the relevant definitions related to the geometry of hyperbolic surfaces and surfaces with hyperbolic ends. The key elements are the classification of hyperbolic ends given in Theorem \ref{theo: classification of hyperbolic ends} and the realizations of funnels and cusps as quotients of appropriate subsets of the hyperbolic half-plane (depicted in Figure \ref{fig: funnel and cusp in H2}). The presentation follows Chapter 2 of David Borthwick's excellent book on the topic \cite{borthwick2016book}, keeping only information relevant to the article. For complementary material on spectral theory of hyperbolic surfaces, we refer to \cite{borthwick2016book,bergeron2011livre} and the references therein.

The hyperbolic upper half-plane will be denoted $\mathbb{H} = \{(x,y) \in \mathbb{R}^2, y > 0 \}$. To alleviate notations, we will often denote elements of $\mathbb{H}$ as $z=x + iy$. The hyperbolic metric on $\mathbb{H}$ is $g_{\mathbb{H}} = \frac{dx^2 + dy^2}{y^2}$. The corresponding Riemannian measure is $d\vol_{\mathbb{H}} = \frac{dxdy}{y^2}$ and the Laplace--Beltrami operator on $\mathbb{H}$ is given by $$\Delta_{\mathbb{H}} = y^2 \left(\partial_x^2 + \partial_y^2\right).$$ Geodesics for the metric $g_{\mathbb{H}}$ are vertical straight lines and arcs of circles centered on the $x$-axis. Geodesic balls are Euclidean balls with center shifted upwards with respect to the hyperbolic center. Precisely, given a point $z = x+iy \in \mathbb{H}$ and a radius $R > 0$, we have \begin{equation}
    \label{eq: balls in hyperbolic half-plane}
    B_z^{\mathbb{H}}(R) = B^{euc}_{x+ i y\cosh(R)}(y\sinh(R)).
\end{equation} where $B^{\mathbb{H}}$ denotes a geodesic ball and $B^{euc}$ denotes a Euclidean ball.

The group of isometries of $(\mathbb{H}, g_{\mathbb{H}})$, known as Möbius transformations, is isomorphic to the group of $2 \times 2$ real matrices with determinant 1, quotiented by $\{\pm I_2\}$. The common notation for this group is $PSL_2(\mathbb{R})$. Given a matrix $$T = \begin{pmatrix}
    a & b \\ c & d
\end{pmatrix}$$ in $PSL_2(\mathbb{R})$, the corresponding isometry is the complex function $$z = x+iy \mapsto \frac{az + b}{cz + d}.$$ Discrete subgroups of $PSL_2(\mathbb{R})$ are called \textit{Fuchsian}. They are exactly the groups of isometries acting properly discontinuously on $\mathbb{H}$.

We call a \textit{hyperbolic surface} any complete, 2-dimensional Riemannian manifold with Gaussian curvature constant equal to $-1$. As a consequence of Hopf's theorem on the classification of manifolds with constant sectional curvature \cite[Theorem 11.12 and Corollary 11.13]{lee1997bookriemannian}, any hyperbolic surface $X$ is isometric to a quotient of $\mathbb{H}$ by some Fuchsian group $\Gamma$.

In \cite{rouveyrol2023spectralH2}, the second author proved the equivalence between \eqref{eq: thickness} and \eqref{eq: spec} for $\Delta_\mathbb{H}$. A main contribution of the present work is to generalize this to geometrically finite hyperbolic surfaces, the definition of which we now give. A \textit{fundamental domain} $\mathcal{F} \subset \mathbb{H}$ for a Fuchsian group $\Gamma$ is a closed subset satisfying \[\bigcup_{T \in \Gamma} T\mathcal{F} = \mathbb{H}, \quad \forall T \in \Gamma \setminus \{I\}, \Int(\mathcal{F}) \cup \Int(T\mathcal{F}) = \varnothing.\] We say that a Fuchsian group is geometrically finite if it admits a fundamental domain which is a geodesically convex polygon with a finite number of sides. This is equivalent to the hyperbolic surface $\mathbb{H}/\Gamma$ having finite genus (number of holes) and a finite number of ends.

\begin{figure}[t]
    \centering
	\includegraphics[width=0.9\textwidth]{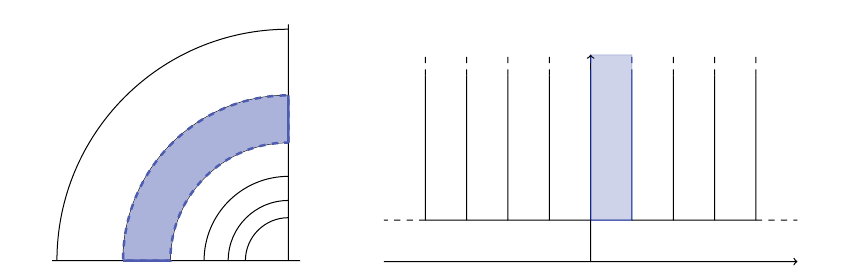}
    \caption{Left: fundamental domain for a funnel (delimited by three geodesic arcs, the vertical one of length $l$) in the hyperbolic half-plane $\mathbb{H}$ and its images by the cyclic Fuchsian group $\langle z \mapsto e^l z \rangle$. Right: fundamental domain $[0,1]~\times~[a,+\infty[$ for a cusp in the hyperbolic half-plane $\mathbb{H}$ and its images by the Fuchsian group $\langle z \mapsto z+1\rangle$.}
    \label{fig: funnel and cusp in H2}
\end{figure}

The structure of geometrically finite hyperbolic surfaces is
well-understood.
$\mathbb{H}$ and its quotients by cyclic Fuchsian groups are called \textit{elementary} surfaces. The classification result for non-elementary geometrically finite hyperbolic surfaces is the following \cite[Theorem 2.23]{borthwick2016book}:
 \begin{theorem}[Classification of hyperbolic ends]
    \label{theo: classification of hyperbolic ends}
    Any non-elementary, geometrically finite hyperbolic surface $M$ admits a compact region $K$ such that $M \setminus K$ is a finite, disjoint union of cusps and funnels. 
\end{theorem}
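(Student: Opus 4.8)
The plan is to exploit the structure of $M$ as a quotient $\mathbb{H}/\Gamma$ by a geometrically finite Fuchsian group $\Gamma$, torsion-free since $M$ is a manifold, and to organize the ends around the \emph{limit set} $\Lambda = \Lambda(\Gamma) \subset \partial\mathbb{H} \cong \mathbb{R}\cup\{\infty\}$, i.e. the set of accumulation points of an orbit $\Gamma z_0$; since $\Gamma$ is non-elementary, $\Lambda$ is infinite. One forms the hyperbolic convex hull $\mathcal{C}(\Lambda) \subset \mathbb{H}$, which is closed, convex, nonempty and $\Gamma$-invariant, so that its quotient $N := \mathcal{C}(\Lambda)/\Gamma$ --- the convex core, or Nielsen region, of $M$ --- is a well-defined closed subsurface. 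The scheme is then: (i) identify $M \setminus N$ as a finite disjoint union of funnels; (ii) show that $N$ becomes compact after removing finitely many cusp neighbourhoods; (iii) collect the pieces.

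For (i): $\partial\mathbb{H} \setminus \Lambda$ is a disjoint union of open arcs, and the topological boundary of $\mathcal{C}(\Lambda)$ in $\mathbb{H}$ is precisely the union of the complete geodesics joining the endpoints of those arcs; finiteness of a fundamental polygon forces only finitely many $\Gamma$-orbits of such geodesics, so $\partial N$ is a finite disjoint union of simple closed geodesics, one for each component of $M \setminus N$. Lifting one such component, it is the image in $M$ of the half-plane of $\mathbb{H}$ bounded by a geodesic $\gamma \subset \partial\mathcal{C}(\Lambda)$ on the side away from $\mathcal{C}(\Lambda)$, modulo the stabilizer of $\gamma$ in $\Gamma$, which (by torsion-freeness) is the infinite cyclic group generated by the hyperbolic element with axis $\gamma$ and translation length $\ell$ equal to the length of the closed geodesic. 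After conjugating $\gamma$ to the imaginary axis and passing to Fermi coordinates $(\rho,t)$ about it, an explicit computation realizes this quotient as the standard funnel $\bigl((0,\infty)_\rho \times (\mathbb{R}/\ell\mathbb{Z})_t,\ d\rho^2 + \cosh^2(\rho)\,dt^2\bigr)$.

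For (ii): the parabolic fixed points of $\Gamma$ belonging to $\Lambda$ split into finitely many $\Gamma$-conjugacy classes, each realized as an ideal vertex of the finite-sided fundamental polygon at which two sides meet at angle zero. Around a representative $p$, Shimizu's lemma (or, equivalently, a Margulis-type argument from the discreteness of $\Gamma$) furnishes a horoball $H_p$ based at $p$ that is precisely invariant --- $T H_p \cap H_p = \varnothing$ for every $T \in \Gamma \setminus \Gamma_p$, with $H_p$ invariant under the cyclic parabolic stabilizer $\Gamma_p$ --- and these may be chosen pairwise disjoint across the finitely many classes. Conjugating $p$ to $\infty$ with $\Gamma_p = \langle z \mapsto z+1 \rangle$, the quotient $H_p / \Gamma_p$ embeds in $M$ as the standard cusp $\{y > a\}/\langle z\mapsto z+1\rangle$, isometric to $\bigl((0,\infty)_\rho \times (\mathbb{R}/\mathbb{Z})_t,\ d\rho^2 + e^{-2\rho}\,dt^2\bigr)$. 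Intersecting $N$, with one such cusp neighbourhood deleted per parabolic class, against a fundamental polygon $\mathcal{F}$, what remains is the part of $\mathcal{F} \cap \mathcal{C}(\Lambda)$ obtained by truncating each of its finitely many ideal vertices by either a horocycle (at a parabolic vertex) or the funnel already removed in (i); since every ideal vertex is of one of these two types, this truncated finite-sided polygon is a compact subset of $\mathbb{H}$, whence $K := N \setminus (\text{cusps})$ is compact.

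I expect the crux to be (ii), and specifically the claim that the convex core is compact after removing finitely many cusps: this is exactly where geometric finiteness is used essentially, both through the dichotomy for the ideal vertices of the fundamental polygon (parabolic cusp points versus endpoints of funnel-bounding geodesics, and nothing else) and through the precise invariance of the horoball cusp neighbourhoods, which makes the removed cusps embedded and mutually disjoint. Granting (i) and (ii), step (iii) is routine: the funnels lie in $M \setminus N$ and the cusps lie inside $N$, so all pieces are pairwise disjoint, finitely many of each, and their union with $K$ is $M$.
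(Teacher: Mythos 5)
The paper does not prove this theorem itself but quotes it as \cite[Theorem 2.23]{borthwick2016book}; your outline follows exactly the standard argument given there (and in most treatments): decompose $M$ via the convex core $N=\mathcal{C}(\Lambda)/\Gamma$, identify $M\setminus N$ with finitely many funnels through the boundary geodesics of the convex hull, carve out precisely invariant horoball cusp neighbourhoods at the finitely many parabolic classes, and observe that the truncated Nielsen region is compact because every ideal vertex of the finite-sided fundamental polygon is either parabolic or lies in $\partial\mathbb{H}\setminus\Lambda$. This is essentially the same proof as the cited source and is correct at the level of detail you give, with the two places you flag (the hyperbolic stabilizer of each boundary geodesic, and the parabolic-vs-ordinary dichotomy for ideal vertices) being precisely where geometric finiteness enters.
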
 
Namely, (non-elementary, geometrically finite) hyperbolic surfaces 
 can
be decomposed as a disjoint union of elements of the three
  following type:
  \begin{itemize}
  \item a compact core, known as the
truncated Nielsen region;
\item funnels;
\item cusps.
  \end{itemize}
Here, given a hyperbolic cylinder $\mathbb{H} / \langle z
\mapsto e^l z \rangle$, where $l > 0$ is the length
of its delimiting geodesic $\{z\in \mathbb{H},{\rm Im}(z)=0\}$, a \emph{funnel} is a half-cylinder, namely
a connected component of the cylinder minus the geodesic.

Given a parabolic cylinder $\mathbb{H}/\langle z \mapsto z + 1 \rangle$, a \emph{cusp} is ``the small end'' of the cylinder. More
precisely, denoting $\mathbb{S}^1_\theta = \mathbb{R}/\mathbb{Z}$ the circle of length 1, cusps are isometric to
$\mathbb{S}^1_\theta \times (a,+\infty)_y$ for some $a > 0$ depending
on the cusp, endowed with the metric $\frac{d\theta^2 + dy^2}{y^2}$.

We depict fundamental domains for funnels and cusps in Figure
\ref{fig: funnel and cusp in H2}. Note that the local injectivity
radius at a point (the largest radius of a geodesic ball centered on
that point and onto which the exponential map is injective) goes to
zero asymptotically at the end of a cusp but grows to infinity in a
funnel. In particular, the volume of balls of fixed radius centered at
a given point tends to $0$ towards the end of a cusp but is
asymptotically constant towards the end of a funnel.

In the article, funnels will be denoted $F$ or $F_j$ and cusps will be denoted $\mathcal{C}$ or $\mathcal{C}_j$.

From now on, we will always imply that the geometric finiteness property holds when mentioning hyperbolic surfaces.

The nature of the compact core does not intervene in the proof of our
results. Thus, though we are motivated by hyperbolic surfaces, our
main result is stated for \emph{surfaces with hyperbolic ends}. The precise framework for Theorem \ref{theo: main result, sufficient part} is thus the following.

\begin{definition}
    \label{def: surface with hyperbolic ends}
    We say that a two-dimensional Riemannian manifold $M$ endowed with a smooth metric $g$ is a surface with hyperbolic ends if there exists some compact region $K \subset M$, still called the compact core, such that $M \setminus K$ is a finite union of funnels and cusps.
  \end{definition}

We insist on the fact that in this definition, no geometric assumption is made in the compact core $K$.

We end this overview by recalling the behavior of the spectrum of the Laplace--Beltrami $\Delta_g$ on a hyperbolic surface. Note that cusps have finite volume while the volume of a funnel is infinite. For compact hyperbolic surfaces, like on any compact manifold, the spectrum is made of discrete eigenvalues and there exists an orthonormal basis of eigenvectors in $L^2(M)$. We shall not discuss this case further as it was covered by Burq and Moyano in \cite{burqmoyano2023propagationheat}, in the case of Dirichlet or Neumann boundary conditions, $W^{2,\infty}$ manifolds and Lipschitz metrics. For non-compact, finite-volume hyperbolic surfaces (meaning they have no funnels and a finite number of cusps), the Laplace--Beltrami operator has absolutely continuous spectrum $[\frac{1}{4},\infty)$, a discrete spectrum consisting of finitely many eigenvalues in $[0, \frac{1}{4})$, and may have up to infinitely many embedded eigenvalues in $[\frac{1}{4},\infty)$ \cite[Theorem 3.12]{borthwick2016book}. These embedded eigenvalues are very unstable, in the sense that they are destroyed by certain deformations of $\Gamma$ \cite{phillipssarnak1985cusps}. There are no such embedded eigenvalues in the infinite-area case \cite[Proposition 7.5]{borthwick2016book}.

Lastly, we emphasize that Theorem \ref{theo: main result, sufficient part} for hyperbolic surfaces cannot be deduced immediately from the main result of \cite{rouveyrol2023spectralH2} on the full half-plane $\mathbb{H}$, as the $\Gamma$-periodic extension of a function in $L^2(\mathbb{H}/\Gamma) \setminus \{0\}$ does not belong to $L^2(\mathbb{H})$. Should there exist a strategy based on spectral inequalities on the full half-plane and periodic extensions from $\mathbb{H}/\Gamma$ to $\mathbb{H}$, we believe that our local-to-global, chart-based method for proving Theorem \ref{theo: main result, sufficient part} is more robust in the perspective of generalizations to a larger class of manifolds.

\subsection{Context}

This bibliographical overview is divided into two parts. First, we present a hopefully not too partial history of spectral estimates and their applications, with an attempt to cover the most recent advances in the field. Next, we cover some bibliography on the propagation of smallness inequality toolbox that is used throughout the article and its connection to spectral estimates.

\subsubsection{Spectral estimates on manifolds, motivations and applications}

The study of spectral estimates of the form \eqref{eq: spec} from
thick sets goes back at least to Logvinenko--Sereda and Kacnel'son \cite{logvinenkosereda1974equivalent,kacnelson1973equivalentnorms}, in connection with uncertainty principles for the Fourier transform. Their methods are complex-analytic and were improved by Nazarov and Kovrizhkin, among others \cite{nazarov1994local,kovrizhkin2000fourier,kovrizhkin2001relatedtoLog-Ser,kovrizhkin2003uncertainty}. In particular, Kovrizhkin made explicit the dependence of the constant $Ce^{C\Lambda}$ appearing in \eqref{eq: spec}, in terms of the thickness parameters $\delta$ and $R$. 

In their seminal article \cite{lebeaurobbiano1995controlechaleur}, Lebeau and Robbiano establish that spectral estimates allowed to prove controllability results for the heat equation on compact manifolds, both with and without boundary. In the boundary-free case, they prove exact null-controllabilty of the heat equation for $L^2$ initial data and arbitrarily short time when the sensor set $\omega$ is open. The equivalence between controllability and observability and the link with spectral estimates are developed in an abstract setting by Miller \cite{miller2010directlebeaurobbiano}, with explicit dependence of the controllability and observability costs. Unlike in \cite{lebeaurobbiano1995controlechaleur}, where exact controllability is shown directly and observability is given as a corollary, most recent controllability results for the heat equation in the spirit of Lebeau and Robbiano now prove observability directly. The telescoping series method of Phung and Wang \cite{phungwang2013observability} provides a robust proof that spectral estimates imply observability from positive-measure time sets via appropriate interpolation estimates. Subsequent developments of this method for Borel sensor sets $\omega$ on the flat space include \cite{apraizescauriaza2013nullcontrol,apraizescauriazawangzhang2014observability}. The sharp character of the thickness condition for observability of the heat equation on the flat space $\mathbb{R}^d$ has been proved simultaneously and independently in \cite{egidiveselic2018controlheat} and \cite{wangwangzhangzhang2019spectralineq_heat}. For additional results on spectral estimates and controllability of parabolic equation from thick sets in non-compact subdomains of the flat space we refer to \cite{egidiseelmann2021Logv-Seredatype}, the review article \cite{egidinakicseelmann+2020heatequation} and the references therein.

The Lebeau--Robbiano strategy for controllability generalizes to other parabolic equations beyond heat, including linear thermoelasticity systems \cite{lebeauzuazua1998thermoelasticity,benabdallahnaso2002thermoelasticplate}, the Kolmogorov equation \cite{lerousseaumoyano2016kolmogorov}, and parabolic equations involving quadratic (possibly non-self-adjoint) operators \cite{beauchardjamingpravda-starov2021spectral,martinpravda-starov2023spectral}. The explicit dependence in $\delta$ of the constant appearing in \eqref{eq: spec} provided by Kovrizhkin \cite{kovrizhkin2001relatedtoLog-Ser} also allows to prove controllability results for weakly dissipative parabolic equations, typically involving fractional Laplacians with exponents smaller than $\frac{1}{2}$ \cite{alphonsekoenig2022nullcont_weaklydissheat}. The thickness condition on $\omega$ required for controllability of weakly dissipative equations involves thickness at asymptotically small scales (see \cite[Definition 3]{alphonsekoenig2022nullcont_weaklydissheat}). In a recent article \cite{bombachtautenhahn2024logvinenkosereda}, Kovrizhkin's spectral estimate with explicit dependence of the constant is generalized to Banach-space valued functions, opening up possible applications to parabolic systems and parabolic equations with a parameter.

Propagation of smallness (typically Carleman inequality-based) and spectral inequalities have also proved key to controlling the $[0,\Lambda]$ frequency range of Schrödinger \cite{miller2004howviolentfastcontrols,susunyuan2023obs-1d-schrod} and wave \cite{burqjoly2016expdecaydampedwaves} equations for arbitrarily large thresholds $\Lambda$. Controllability in the high-frequency asymptotic regime is then provided by other methods, typically of semiclassical nature.

Since the original article by Lebeau and Robbiano \cite{lebeaurobbiano1995controlechaleur}, the theory of spectral estimates on manifolds has encountered numerous developments, with applications to both controllability results and spectral geometry. In \cite{jerisonlebeau1999nodal}, the authors provide a spectral estimate from open sensor sets on compact manifolds, both with and without boundary, and use it to obtain bounds on the $(d-1)$-Hausdorff measure of nodal sets of sums of eigenfunctions of the Laplacian in the spirit of Donnelly and Fefferman \cite{donnellyfefferman1988nodalRiemannian,donnellyfefferman1990nodalRiemannianboundary}. Like in \cite{lebeaurobbiano1995controlechaleur}, the proof of the spectral inequality is based on interior and boundary Carleman estimates. In recent years, the spectral estimates of \cite{lebeaurobbiano1995controlechaleur,jerisonlebeau1999nodal} have been generalized to the case where $\omega$ is merely a Borel subset of the interior of the compact manifold, on the sphere \cite{dickeveselic2022sphericallogvinenko} and on general compact manifolds with or without boundary \cite{burqmoyano2021spectralestimates}. In the latter, Burq and Moyano use Logunov and Malinnikova's propagation of smallness estimates for gradients of harmonic functions \cite[Theorem 5.1]{logunovmalinnikova2018qtttve_propag_smallness} as a replacement for previously Carleman-based propagation of smallness to derive controllability properties from Borel and zero-measure sets on compact Riemannian manifolds with $W^{2,\infty}$ metrics. The passage from the boundary-free case to the case of Dirichlet or Neumann boundary conditions is performed using a double-manifold reflection argument.

The present article is part of a program aiming at generalizing spectral estimate results from thick sets to non-compact geometric settings by adapting the strategy first introduced by Burq and Moyano. Miller \cite{miller2004nullcontrolheatunbounded,miller2005uniquecontinuation} was the first to tackle that problem using the Lebeau--Robbiano method and to provide a necessary condition for spectral estimates on non-compact manifolds with Ricci curvature bounded below. The next steps in that direction focused on the full space $\mathbb{R}^d$ endowed with analytic \cite{lebeaumoyano2019spectralinequalities} and bounded \cite{burqmoyano2023propagationheat} perturbations of the flat metric. The first instance of a result on non-compact, non-flat manifolds was provided by Rose and Tautenhahn \cite{rosetautenhahn2023unique}, using different, Brownian-motion and heat-kernel based methods, in a general geometric framework (Ricci curvature bounded from below, namely the assumption of Theorem \ref{theo: main result, necessary part}). Their result holds for frequency ranges $[0,\Lambda]$ below a frequency threshold constrained by the curvature lower bound, thus not allowing to control the heat equation. The first high-frequency result for spectral estimates from thick (Borel) sensor sets on a non-compact, non-flat Riemannian manifold was provided by the second author in the specific case of the hyperbolic half-plane $\mathbb{H}$ \cite{rouveyrol2023spectralH2}. In that article, the spectral estimate is deduced from \eqref{eq: thickness} by adapting the approach of Burq and Moyano \cite{burqmoyano2023propagationheat,burqmoyano2021spectralestimates} on a suitably defined covering of the manifold, while the necessary character of the thickness condition is shown using heat kernels in the spirit of \cite{wangwangzhangzhang2019spectralineq_heat}. The present article can be seen as a continuation of \cite{rouveyrol2023spectralH2}. The main contributions made here and absent from \cite{rouveyrol2023spectralH2} are the following: \begin{enumerate}
    \item in Section \ref{sec: funnel}, we tackle the fact that the denominator $\vol(B_z(R))$ (with $R$ fixed) can go to zero on a manifold with null global injectivity radius (in our case, when $z$ tends to the end of a cusp). In $\mathbb{H}$, that denominator was constant with respect to $z$ and included in the $\delta(R)$ parameter. We hope that the periodic lifting argument provided may be generalized to a larger class of manifolds, relaxing the constant-curvature assumption.
    \item in Lemma \ref{lem:prop_smallness_compact}, we prove that controllability of the heat equation on a manifold with ends only requires a sensor set $\omega$ active in the ends. The idea of that section is to use Carleman-based propagation of smallness estimates to propagate smallness from any end to the compact core.
    \item in Section \ref{sec: proof necessary condition}, we generalize the idea of combining observability estimates and heat kernel bounds on manifolds developed in \cite[Section 2.3]{rouveyrol2023spectralH2} to manifolds with smooth metric and Ricci curvature bounded from below. Again, attention must be paid to recovering the small denominator $\vol(B_z(R))$ which could be ignored on $\mathbb{H}$. 
\end{enumerate}

We now mention two questions  which we deem interesting but chose not to address in the article. The first one concerns the dependence of the constant $Ce^{C\Lambda}$ in \eqref{eq: spec} on the geometry and on the thickness parameters $(R, \delta)$, which has applications of its own as discussed earlier. On compact manifolds, this (more specifically, the dependence of the observability cost for the heat equation) was studied by Miller \cite{miller2004geometricbounds}. The results were refined by Laurent and Léautaud \cite{laurentleautaud2021conjlucmiller} in the case where the sensor set $\omega$ is a ball of decreasing radius. Concerning the behavior of observability and control costs for the heat equation as a function of the constants appearing in the spectral estimate, we refer to \cite{wangwangzhangzhang2019spectralineq_heat,nakictaufertautenhahnveselic2020homogenizationcontrolcostheat} and the references therein. Lastly, Theorem 5.1 of \cite{rosetautenhahn2023unique} gives an explicit dependence of the spectral estimate constant in terms of the Ricci curvature lower bound and the thickness parameters for frequencies smaller than a specific (and again, explicit) threshold. Note that the thickness parameters used in \cite{rosetautenhahn2023unique} are not exactly those of the present article: the $(R,\rho)$-thickness condition used in \cite{rosetautenhahn2023unique}, under which the sensor set $\omega$ must contain a ball of radius $\rho$ inside every ball of radius $R$, is similar to \eqref{eq: thickness} in spirit but strictly implies it. 

The other main question that we chose not to tackle in the present
article is that of sensitivity of the thickness condition to
potentials. Understanding spectral estimates for the operator $-\Delta
+ V$ opens up applications to controllability of the heat equation
with a potential (see for example
\cite{beauchardjamingpravda-starov2021spectral,martinpravda-starov2023spectral}
in the harmonic case) and random Schrödinger operators
\cite{rojas-molinaveselic2013uniquecont_rdmschop,klein2013UC_rdmschr_wegnerestimates}. On
the full space $\mathbb{R}^d$,
\cite{lebalchmartin2024quantitative} deals with spectral estimates from sets
satisfying \eqref{eq: thickness} for $-{\rm div}(A(x)\nabla u) + V$, with $V$ an $L^\infty$ potential and $A$ uniformly elliptic with Lipschitz entries. Like that of Theorem \ref{theo: main result, sufficient part}, their proof is based on the Logunov--Malinnikova propagation of smallness estimate for gradients. Previous results in the case of bounded potentials used an analytic framework \cite{lebeaumoyano2019spectralinequalities} or a decay assumption at infinity \cite[Theorem 2]{zhu2024spineq_schvarpot}. In the case of unbounded, polynomially growing potentials of the full space, it has been shown that spectral estimates hold from sets of decaying density \cite{alphonseseelmann2023quantitativespectralinequalitiesanisotropic,dickeseelmannveselic2024spectralineq,wang(yunlei)2024quantitativepropsmallness,zhu2024spineq_schvarpot}. The idea in that case is to combine propagation of smallness arguments with frequency-quantitative decay (Agmon) estimates for eigenfunctions \cite{agmon1982book,alphonse2024null-controllabilityShubinAgmon}. We hope to study Agmon estimates on non-compact manifolds and decaying thickness conditions in presence of a potential in forthcoming work. 

\subsubsection{About propagation of smallness inequalities and their application on manifolds}
\label{sec: bibliography propagation of smallness}

We conclude this bibliographical overview by covering the propagation of smallness tools used to prove Theorem \ref{theo: main result, sufficient part}. Precise statements of the corresponding theorems can be found in Appendix \ref{sec: appendix propagation of smallness results}. Because the proof of Theorem \ref{theo: main result, necessary part} is of a different, geometric-analytic nature, discussion of the tools used to prove that result is postponed to Section \ref{sec: proof necessary condition}.

The main propagation of smallness tool used in the article is the result by Logunov and Malinnikova \cite[Theorem 5.1]{logunovmalinnikova2018qtttve_propag_smallness} for gradients of harmonic functions. The proof is a combination of estimates for the critical sets of solutions of elliptic equations \cite{cheegernabervaltorta2015criticalsetselliptic}, properties of the doubling index of these solutions and combinatorial lemmas, which were used by the same authors to obtain results on nodal set bounds \cite{logunovmalinnikova2018nodaldim23,logunov2018nodalpolynomial,logunov2018nodalnadirashvili}. The estimate holds on subdomains of the Euclidean space and the constant involved depends on the Lipschitz and ellipticity constants of the metric as well as the Hausdorff measure $\delta$ of the set from which smallness is propagated. For these reasons, we need to apply the Logunov--Malinnikova estimate in charts satisfying at least the following properties:
\begin{enumerate}
    \item the charts must contain a ball of radius $R$ so that the thickness assumption \eqref{eq: thickness} may allow to recover a uniform mass of the sensor set $\omega$ inside the chart;
    \item the atlas is locally uniformly finite, meaning each point of the manifold lies at the intersection of a uniformly bounded number of charts;
    \item the domain of the charts must be uniform, and the metric must be controlled in terms of ellipticity and Lipchitz constants in the corresponding coordinate system.
\end{enumerate}

In the way it is used in the present article, the Logunov--Malinnikova estimate can be thought of as a replacement for Carleman-estimate based propagation of smallness (see for example \cite[Lemmas 14.4, 14.5]{jerisonlebeau1999nodal}). One major improvement in the former is that it requires only very mild assumptions on the set from which smallness propagates (positivity of some $(d-1-\epsilon)$ Hausdorff content), while the Carleman method requires open sets -- or open sets of the boundary, allowing for propagation of smallness from a $(d-1)$-dimensional set. The usual proof of Carleman estimates relies on pseudodifferential calculus with a large parameter and a commutator estimate based on the G\aa rding inequality. This gives a weighted unique continuation estimate (the Carleman inequality), and a good choice of the weight function allows to deduce propagation of smallness results (see \cite[Section 3]{lebeaurobbiano1995controlechaleur} or \cite[Sections 14.3, 14.4]{jerisonlebeau1999nodal}). A comprehensive overview of Carleman estimates and their applications to control problems is the book of Le Rousseau, Lebeau and Robbiano \cite{lerousseaulebeaurobbiano2022carleman1,lerousseaulebeaurobbiano2022carleman2}. For a shorter introduction (in French), we refer the reader to \cite{lebeau2015carlemanSMF}.

\subsection{Strategy of proof}

\begin{figure}[b]
    \centering
    \begin{tikzpicture}[scale=2.7]
        \draw[domain=-2.8:2.8, smooth, variable=\x, black, very thick] plot ({\x}, {0.8*exp(-\x*\x/2)}); 
        \draw[domain=-2.8:2.8, smooth, variable=\x, black, very thick] plot ({\x}, {-0.8*exp(-\x*\x/2)}); 
        \draw[domain=-0.5:0.5, smooth, variable=\x, black, very thick] plot ({\x}, {0.5*(\x-0.4)*(\x+0.4)}); 
        \draw[domain=-0.4:0.4, smooth, variable=\x, black, very thick] plot ({\x}, {-0.5*(\x-0.4)*(\x+0.4)}); 
        
        \draw[RoyalBlue, thick] (-1.5,0.) ellipse (0.06 and 0.26);
        \draw[RoyalBlue, thick] (1.5,0.) ellipse (0.06 and 0.26);
        \draw[RoyalPurple, thick, dashed] (-1.3,0.) ellipse (0.06 and 0.34);
        \draw[RoyalPurple, thick, dashed] (1.3,0.) ellipse (0.06 and 0.34);

        \draw[RoyalPurple, thick, dashed] (-1.3,-0.8) -- (-1.3,-0.9) -- (1.3,-0.9) -- (1.3,-0.8);
        \draw[RoyalPurple, thick, dashed] (0.,-0.9) node[below] {$K$};
        \draw[RoyalPurple, thick, dashed] (-2.8,-0.8) -- (-2.8,-0.9) -- (-1.3,-0.9) -- (-1.3,-0.8);
        \draw[RoyalPurple, thick, dashed] (-2.05,-0.9) node[above] {$\mathcal{C}_1$};
        \draw[RoyalPurple, thick, dashed] (2.8,-0.8) -- (2.8,-0.9) -- (1.3,-0.9) -- (1.3,-0.8);
        \draw[RoyalPurple, thick, dashed] (2.05,-0.9) node[above] {$\mathcal{C}_2$};

        \draw[RoyalBlue, thick] (-1.7,0.95) -- (-1.7,1.05) -- (1.7,1.05) -- (1.7,0.95);
        \draw[RoyalBlue, thick] (0.,1.05) node[above] {$\Tilde{K}$};
        \draw[RoyalBlue, thick] (-2.8,0.8) -- (-2.8,0.9) -- (-1.5,0.9) -- (-1.5,0.8);
        \draw[RoyalBlue, thick] (-2.15,0.9) node[below] {$\Tilde{\mathcal{C}}_1$};
        \draw[RoyalBlue, thick] (2.8,0.8) -- (2.8,0.9) -- (1.5,0.9) -- (1.5,0.8);
        \draw[RoyalBlue, thick] (2.15,0.9) node[below] {$\Tilde{\mathcal{C}}_2$};
    \end{tikzpicture}
    \caption{Sketch of a hyperbolic surface of genus 1 with two cusps. In purple dashed lines, the decomposition into its compact core and cusps M = $K \sqcup \mathcal{C}_1 \sqcup \mathcal{C}_2$. In blue full lines, the regions $\Tilde{\mathcal{C}}_1$, $\Tilde{\mathcal{C}}_2$ where we prove propagation of smallness estimates, and the compact region $\Tilde{K}$ overlapping $\Tilde{\mathcal{C}}_1$ and $\Tilde{\mathcal{C}}_2$.}
    \label{fig: example hyperbolic surface}
\end{figure}
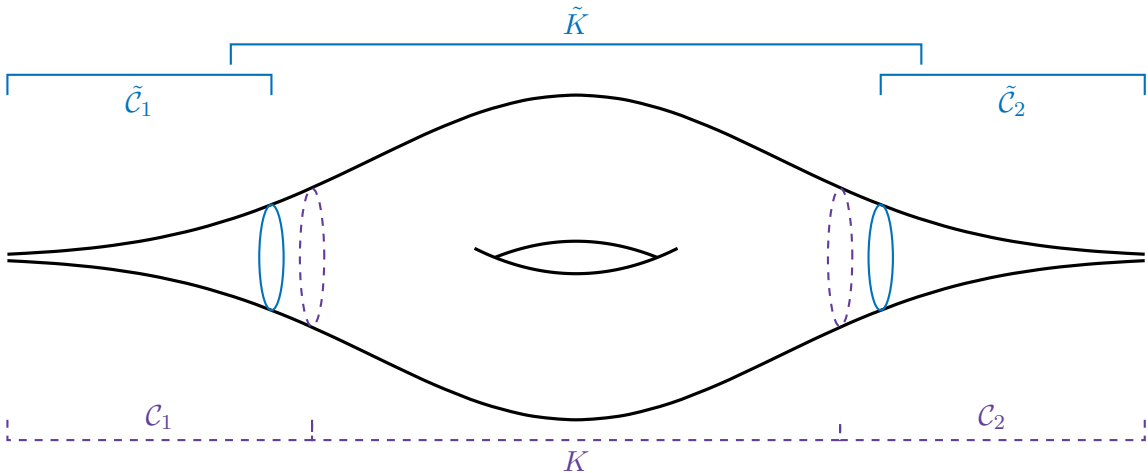

The proof of the sufficient condition consists in applying propagation of smallness inequalities in charts, to a harmonic function which contains all relevant information on $\Pi_\Lambda u$. To that end, we add an auxiliary variable $t$ and introduce the harmonic extension
 \begin{equation}
    \label{eq: harmonic extension}
    v_\Lambda(t, \cdot) = \int_{\lambda = 0}^\Lambda \frac{\sinh(\lambda t)}{\lambda} dm_\lambda u,
\end{equation} which satisfies \begin{equation}
    \label{eq: harmonic extension properties}
    v_\Lambda(t=0, \cdot) = 0, \quad \partial_t v_\Lambda(t=0, \cdot) = \Pi_\Lambda u, \quad \nabla_g v_\Lambda (t=0, \cdot) = 0, \quad \partial_t^2 + \Delta_g v_\Lambda = 0.
\end{equation}

The proof of Theorem \ref{theo: main result, sufficient part} adapts Burq and Moyano's strategy for Lipschitz metrics on the full space $\mathbb{R}^d$ \cite{burqmoyano2021spectralestimates} (itself inspired by the pioneering work of Jerison and Lebeau \cite{jerisonlebeau1999nodal}). The general idea is to apply propagation of smallness results for the harmonic extension $v_\Lambda$ in local charts and to derive \eqref{eq: spec} from summing these estimates together. We now describe the steps of the proof of Theorem \ref{theo: main result, sufficient part} and how they differ from \cite{burqmoyano2023propagationheat,burqmoyano2021spectralestimates} and the second author's previous work \cite{rouveyrol2023spectralH2}.

The first step is to apply Logunov and Malinnikova's propagation of
smallness inequality
\cite{logunovmalinnikova2018qtttve_propag_smallness} for gradients in
a suitably chosen ball of the hyperbolic half-plane $\mathbb{H}$. We refer to section \ref{sec: bibliography propagation of smallness} for a quick bibliographical overview of this result. This is essentially the only step where the explicit expression of the metric is used, as it allows to construct charts appropriate for applying the Logunov--Malinnikova estimate with uniform constants. The inequality obtained (Lemma \ref{lemma: propagation of smallness ball}) and its proof are close to those used in \cite{rouveyrol2023spectralH2}.

We then decompose the surface with hyperbolic ends into its cusps $\mathcal{C}_j$, funnels $F_j$, and a compact core $K$. The propagation of smallness inequality for balls in $\mathbb{H}$ is extended to funnels and cusps using periodic lifting arguments based on their respective structure (see Figure \ref{fig: funnel and cusp in H2}). Specifically, the surface is decomposed as depicted in Figure \ref{fig: example hyperbolic surface}: we prove propagation of smallness from the intersection of $\omega$ with slightly smaller subsets $\Tilde{F}_j$ , $\Tilde{\mathcal{C}}_j$ of the funnels and cusps, respectively.

We then denote $\Tilde{K}$ an open, relatively compact enlargement of $K$ that intersects every $\Tilde{F}$, $\Tilde{\mathcal{C}}$. In that relatively compact set, we use Carleman-based propagation of smallness results from \cite{lerousseaulebeaurobbiano2022carleman2} to reduce the question of the compact core to propagation of smallness inequalities in the ends. We finish by patching together estimates in every tilded subset to obtain the spectral estimate.

The proof of Theorem \ref{theo: main result, necessary part} is
inspired by a similar result from
\cite{wangwangzhangzhang2019spectralineq_heat} on the flat
space. Indeed, the spectral estimate implies an observability
inequality for solutions of the free heat equation as stated in Lemma
\ref{lemma: observability}. The thickness inequality can then be
recovered by testing this observability inequality for short times
with a well-chosen heat kernel. Key elements of the proof are heat
kernel bounds and volume doubling estimates that hold on manifolds
with smooth metrics and Ricci curvature bounded from below. As
remarked earlier, we hope that the regularity assumption on the metric
$g$ can be weakened, but we believe that a lower bound on Ricci
  curvature is a crucial condition in Theorem \ref{theo: main result, necessary part}. Indeed, there are examples of ill-behaved heat kernels when that assumption is dropped.

The plan of the article is the following. In Section \ref{sec: proof sufficient condition}, we prove Theorem \ref{theo: main result, sufficient part}. The proof is divided into three subsections. In the
first, we prove a propagation of smallness result for ``basic
pieces'': balls in hyperbolic space (with uniformity with respect to the controlled
set $\omega$) and on the compact core (without uniformity, but we are
dealing with a \emph{fixed} hyperbolic surface in this article). In subsection \ref{sec: funnel}, we show how this result extends to funnels and cusps using periodic lifting arguments. We finish Section \ref{sec: proof sufficient condition} by showing propagation of smallness from an end to the compact core and patching estimates on funnels, cusps and the compact core together to deduce the spectral estimate. Section \ref{sec: proof necessary condition} is dedicated to the proof of Theorem \ref{theo: main result, necessary part}. The Appendix contains statements and comments on the propagation of smallness inequalities used throughout Section \ref{sec: proof sufficient condition}.

\subsection*{Notation}

Throughout the rest of the article, we denote by $z$ a generic element in the manifold $M$, $d_g(z,z')$ the geodesic distance between two points $z, z' \in M$ and $d\vol_g$ the Riemannian measure corresponding to the metric $g$. $B_z(R)$ denotes a geodesic ball of center $z$ and radius $R$.

The symbol $\lesssim$ means that the left-hand-side is bounded by the right-hand-side up to an implicit multiplicative constant which is independent of relevant quantities. The dependence of implicit constants is specified as an index when deemed necessary. $A \simeq B$ means that both $A \lesssim B$ and $B \lesssim A$ hold.

\subsection*{Acknowledgements}

M.R. would like to thank Ruoyu P.T. Wang for pointing out the interest of studying spectral estimates on surfaces with cusps, and Nicolas Burq and Luc Robbiano for interesting discussions about the topic of this article. M.R. acknowledges funding from a CDSN PhD grant of ENS Paris-Saclay, via the Hadamard Doctoral School of Mathematics.
A.D. acknowledges funding by the ERC-2022-ADG grant GEOEDP (project 101097171).

\section{Thickness of $\omega$ implies the spectral estimate}
\label{sec: proof sufficient condition}

\subsection{Local estimates for propagation of smallness}
\label{sec: propagation of smallness}

As explained in the introduction, we aim to obtain propagation of
smallness estimates on both funnels and cusps. These estimates will in
turn derive from an inequality localized on balls of the
hyperbolic half-plane, the proof of which is borrowed from
\cite{rouveyrol2023spectralH2}. These estimates will be applied with
$w$ and $\mathfrak{u}$ equal to appropriate periodic liftings to
$\mathbb{H}$ of $v_\Lambda$ and $\Pi_\Lambda u$, respectively. The
statement is made precise by the following lemma.

Unless explicitly noted, in this article, $L^2$ norms of functions on
Riemannian manifolds and gradients, are considered with respect to the
Riemannian structures.

For example, given $V\subset
\mathbb{H}$ open and
$u:V\to \mathbb{C}$, we denote
\[
  \|u\|_{L^2(V)}=\int_V |u(x,y)|^2\frac{{\rm d}x{\rm d}y}{y^2}
\]
and
\[
  \nabla u:(x,y)\mapsto (y^{-1}\partial_xu, y^{-1}\partial_yu).
\]
We recall that, in the half-plane model $\mathbb{H}$ of the hyperbolic
space, the geodesic ball $B_{\mathbb{H}}((x,y),R)$ centred at a point
$(x,y>0)$ and of radius $R$ coincides with the Euclidean ball of
centre $(x,y\cosh(R))$ and radius $\sinh(R))$.

The following local lemma will be used to control the non-compact,
hyperbolic ends of the manifold. It consists in a propagation of
smallness estimate, on hyperbolic balls of fixed radius, where the
constants do not depend on the geometry of the controlled set but only
on its volume.
\begin{lemma}
    \label{lemma: propagation of smallness ball}
    Let $R,\eta,T,\delta$ be positive constants. There exists $C>0$ and
    $\alpha\in (0,1)$ such that the following is true.

    Let $z\in \mathcal{H}$ and let $\mathfrak{u} : B(z,e^{\eta}R) \rightarrow
    \mathbb{R}$ and $w : \mathbb{R}_t \times B(z,e^{\eta}R)
    \rightarrow \mathbb{R}$ satisfy $$(\partial_t^2 +
    \Delta_\mathbb{H}) w = 0, \quad \partial_t w(t=0) = \mathfrak{u},
    \quad \nabla_\mathbb{H} w(t=0) = 0.$$ Consider a subset
    $\hat{\omega} \subset B(z,R)$ satisfying
    $\vol_\mathbb{H}(\widehat{\omega}) \geq \delta$.

    Then
    \begin{equation}
        \label{eq: propagation of smallness, ball}
        \|\mathfrak{u}\|_{L^2(B(z,R))}+\|\partial_t
        w\|_{L^2((-\frac{T}{2},\frac{T}{2}) \times B(z,R))}\leq C \|\mathfrak{u} \|_{L^2(\omega)}^{\alpha}\| w\|_{H^3((-T,T) \times B(z,e^{\eta}R))}^{1-\alpha}.
      \end{equation}
\end{lemma}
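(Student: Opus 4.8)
The proof adapts the scheme of \cite{burqmoyano2021spectralestimates,rouveyrol2023spectralH2}: one views $w$ as a harmonic function for the product metric $\mathrm{d}t^2\oplus g_\mathbb{H}$ on $\mathbb{R}_t\times\mathbb{H}$ (whose Laplace--Beltrami operator is exactly $\partial_t^2+\Delta_\mathbb{H}$), uses the Cauchy data to relate $\mathfrak{u}$ to the gradient of $w$ restricted to the slice $\{t=0\}$, and invokes the Logunov--Malinnikova propagation of smallness inequality for gradients recalled in Appendix~\ref{sec: appendix propagation of smallness results}. The first step removes the dependence on $z$: since $PSL_2(\mathbb{R})$ acts transitively by isometries on $\mathbb{H}$, any such isometry extends trivially in $t$ to an isometry of the product, and the equation $(\partial_t^2+\Delta_\mathbb{H})w=0$, the Cauchy conditions $\partial_t w(0,\cdot)=\mathfrak{u}$, $\nabla_\mathbb{H}w(0,\cdot)=0$, the $L^2$ and $H^3$ norms (taken with respect to the Riemannian structures) and the bound $\vol_\mathbb{H}(\widehat{\omega})\geq\delta$ are all invariant; I may thus assume $z=i$. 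Everything then takes place on the \emph{fixed} bounded Euclidean domain $\mathcal{Q}:=(-T,T)_t\times B(i,e^\eta R)\subset\mathbb{R}^3$, and this is precisely what makes the forthcoming constants depend only on $(R,\eta,T,\delta)$.

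On $B(i,e^\eta R)$ the coordinate $y$ stays in a fixed compact subinterval of $(0,\infty)$, so the Laplace equation reads $\Div(\widehat A\nabla w)=0$ with $\widehat A=\mathrm{diag}(y^{-2},1,1)$, a smooth uniformly elliptic matrix field whose ellipticity and Lipschitz constants are controlled by $R,\eta$; likewise the hyperbolic and Euclidean measures are comparable there, so $\mathrm{Leb}_2(\widehat\omega)\gtrsim\delta$ and $\|\mathfrak u\|_{L^2(\widehat\omega,\,\mathrm{d}x\mathrm{d}y)}\simeq\|\mathfrak u\|_{L^2(\widehat\omega)}$. The Cauchy data say that $\nabla_{t,x,y}w(0,\cdot)=(\mathfrak u,0,0)$, hence $|\nabla w|=|\mathfrak u|$ pointwise on the hyperplane slice $\Sigma:=\{0\}\times B(i,e^\eta R)$. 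Consequently the set $E:=\{0\}\times\widehat\omega\subset\Sigma$ carries all the information about $\mathfrak u$ on $\widehat\omega$, and since $\widehat\omega$ has Lebesgue measure $\gtrsim\delta$ inside a set of fixed diameter, $E$ has $(n-1-\epsilon)$-dimensional Hausdorff content bounded below in terms of $\delta,R,\eta$, for the $\epsilon=\epsilon(n,\widehat A)>0$ furnished by the Logunov--Malinnikova theorem (here $n=3$, $n-1=2$).

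Now apply that estimate to the $\widehat A$-harmonic function $w$ on $\mathcal Q$, propagating smallness of $|\nabla w|$ from $E$ to the region $(-T/2,T/2)\times B(i,R)$ (by a standard covering of this region by small balls and chaining of three-ball inequalities). Because $\sup_E|\nabla w|=\|\mathfrak u\|_{L^\infty(\widehat\omega)}$ is an $L^\infty$ quantity while the target norm is $\|\mathfrak u\|_{L^2(\widehat\omega)}$, I run the argument with $E$ replaced by $\{0\}\times(\widehat\omega\cap\{|\mathfrak u|\leq\lambda\})$, where $\lambda^2:=2\,\mathrm{Leb}_2(\widehat\omega)^{-1}\|\mathfrak u\|_{L^2(\widehat\omega)}^2$: by Chebyshev this truncated set still has content $\gtrsim\delta$, and $|\nabla w|\leq\lambda\lesssim\|\mathfrak u\|_{L^2(\widehat\omega)}$ on it. This gives $\|\nabla w\|_{L^\infty((-T/2,T/2)\times B(i,R))}\leq C\|\mathfrak u\|_{L^2(\widehat\omega)}^{\alpha}\|\nabla w\|_{L^\infty((-3T/4,3T/4)\times B(i,R'))}^{1-\alpha}$ for a fixed $R<R'<e^\eta R$. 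On the left, both terms of \eqref{eq: propagation of smallness, ball} are $\lesssim_{R,T}\|\nabla w\|_{L^\infty}$ over that region, since $\mathfrak u=\partial_t w(0,\cdot)$ and $|\partial_t w|\leq|\nabla w|$ pointwise; on the right, interior elliptic regularity together with the Sobolev embedding $H^2(\mathbb{R}^3)\hookrightarrow L^\infty$ give $\|\nabla w\|_{L^\infty((-3T/4,3T/4)\times B(i,R'))}\lesssim\|\nabla w\|_{H^2(\mathcal Q)}\leq\|w\|_{H^3(\mathcal Q)}$, which is the right-hand side of \eqref{eq: propagation of smallness, ball} once the isometry of Step~1 is undone. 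This yields the claim with the same exponent $\alpha$.

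The main obstacle is the correct deployment of the Logunov--Malinnikova estimate. Two points require care: smallness must be propagated \emph{from a hyperplane slice} $E\subset\{t=0\}$, so one needs the form of the theorem valid for sets of positive $\mathcal H^{n-1-\epsilon}$-Hausdorff content rather than for open sets; and that theorem naturally controls $\sup_E|\nabla w|$, so upgrading it to the $L^2(\widehat\omega)$-norm of $\mathfrak u$ appearing in \eqref{eq: propagation of smallness, ball} requires the Chebyshev truncation above (which costs an extra power of $\mathrm{Leb}_2(\widehat\omega)$, harmless since it is $\gtrsim\delta$). Everything else — the isometry reduction, the elliptic reformulation, the interior regularity and Sobolev steps — is routine, but one must check throughout that constants depend only on $(R,\eta,T,\delta)$, which is exactly what the reduction to $z=i$ secures.
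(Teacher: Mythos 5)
Your proof is correct and follows essentially the same route as the paper: reduce to $z=i$ by a hyperbolic isometry (extended trivially in $t$), recast the equation as $\Div(\widehat A\nabla w)=0$ for a fixed uniformly elliptic Lipschitz matrix $\widehat A=\mathrm{diag}(y^{-2},1,1)$, apply Theorem~\ref{theo: logunov-malinnikova} with $E=\{0\}\times\widehat\omega$ (using that the Cauchy data force $\nabla_{t,x,y}w(0,\cdot)=(\mathfrak u,0,0)$), upgrade the resulting $L^\infty$ inequality to the $L^2$ bound via a level-set argument, and close with Sobolev embedding into $L^\infty$ and interior regularity.

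The one place where you deviate in substance is the $L^\infty\to L^2$ upgrade. You pick the explicit Chebyshev threshold $\lambda^2=2\,\mathrm{Leb}_2(\widehat\omega)^{-1}\|\mathfrak u\|_{L^2(\widehat\omega)}^2$ and keep the sub-level set, which automatically has measure $\geq\mathrm{Leb}_2(\widehat\omega)/2\gtrsim\delta$ and on which $|\nabla w|\leq\lambda\lesssim_\delta\|\mathfrak u\|_{L^2(\widehat\omega)}$; this also silently disposes of the degenerate case $\mathfrak u\equiv 0$ on $\widehat\omega$. The paper instead defines a level $a$ in terms of $\sup_{\mathcal K}|\nabla W|$, $\sup_\Omega|\nabla W|$, and an auxiliary parameter $\epsilon$ calibrated against the Logunov--Malinnikova constant, and argues by contradiction that the sub-level set has small measure; this gives the same conclusion but requires the reader to unwind why the choice of $\epsilon$ does not secretly depend on the solution. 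Your version is a marginally more direct packaging of the same idea and loses nothing, since the dependence on $\mathrm{Leb}_2(\widehat\omega)^{-1}\lesssim\delta^{-1}$ is allowed.

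One small inefficiency: the parenthetical ``covering of this region by small balls and chaining of three-ball inequalities'' is unnecessary. Theorem~\ref{theo: logunov-malinnikova} already propagates smallness from $E$ directly to any compact $\mathcal K\subset\subset\Omega$ in a single application, with constants depending only on $\delta$, the ellipticity and Lipschitz data, and the separation from $\partial\Omega$; no chaining is required.
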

\begin{proof}
In this proof, we will use both the intrinsic Riemannian structure on
$\mathbb{H}$, as well as the one coming from the Euclidean structure on the
plane. To distinguish between the two, we will use the subscripts
$\mathbb{H}$ and $\mathbb{R}^2$ on $L^2$ spaces, gradients, and balls.

We first apply an isometry $\phi$ of $\mathbb{H}$ which sends $B(z,R)$ to
$B(i,R)$; by design, $\phi$ preserves $\Delta_{\mathbb{H}}$ and $L^2_{\mathbb{H}}$ norms.

Accordingly, we define $W(t, X, Y) = w(t,
\phi^{-1}(X,Y))$, which satisfies $$(\partial_t^2 + Y^2
\Delta_{\mathbb{R}^2}) W(t, X, Y) = (\partial_t^2 +
\Delta_\mathbb{H}) w(t, x, y) = 0, \quad \forall (t,X, Y) \in \mathbb{R}\times
B_{\mathbb{H}}(i,R).$$

Multiplying by $\frac{1}{Y^2}$, we get $$(\partial_t \frac{1}{Y^2}
\partial_t + \Delta_{\mathbb{R}^2})W = 0 \text{ over }
\mathbb{R}\times B_{\mathbb{H}}(i,R),$$ which in turn can be written in the form
\[
  \Div(A
\nabla_{t,X,Y} W) = 0 \qquad \qquad A(t,X,Y) = \begin{pmatrix}
    \frac{1}{Y^2} & 0 & 0 \\
    0 & 1 & 0 \\
    0 & 0 & 1
\end{pmatrix}.\] For any $T > 0$, $A$ is uniformly elliptic with
Lipschitz coefficients over the domain \[(-T,
  T)~\times~B_{\mathbb{H}}(i,R)~=~(-T, T)_t~\times~B_{\mathbb{R}^2}(i\cosh(R),\sinh(R)),\] and its ellipticity and
Lipschitz constants are independent of $T$ and $z$: in the sense
of quadratic forms one has
\[
  e^{-2R}\leq A\leq e^{2R}
\]
and (say in operator norm)
\[
  \|A(z_1)-A(z_2)\|\leq e^{3R}|z_1-z_2|_{\infty}.
\]

Fix $T_1=\frac T2>0$ and $T_2>T_1$. We denote
\begin{equation}
    \label{eq: logunov-malinnikova sets (t, X, Y)}
    \left\{\begin{array}{ll}\mathcal{K} = (-T_1, T_1) \times
             B_{\mathbb{H}}(i,R)\\ \Omega = (-T_2, T_2) \times
             B_{\mathbb{H}}(i,e^{\eta}R)\\ F =
             \phi(\widehat{\omega})\\ E = \{0\} \times
             F.
           \end{array}\right.
         \end{equation}

  Then by the assumption on $\widehat{\omega}$, $$\int_{\widehat{\omega}} \frac{dxdy}{y^2} = \int_{\phi(\widehat{\omega})} \frac{dXdY}{Y^2} \geq \delta.$$ Thus, \begin{equation}
\label{eq: lower bound after scaling}
\vol_{\mathbb{R}^3}(\phi(\widehat{\omega})) \geq \frac{\delta}{e^{2R}}=:\tilde{\delta}.\end{equation} By Logunov and Malinnikova's theorem \cite[Theorem 5.1]{logunovmalinnikova2018qtttve_propag_smallness}, the statement of which is recalled in Appendix \ref{sec: appendix propagation of smallness results}, we get \begin{equation}
    \label{eq: logunov malinnikova L infty bound}
    \sup_{\mathcal{K}} |\nabla_{\mathbb{R}^3} W| \leq C \sup_{E} |\nabla_{\mathbb{R}^3} W|^\alpha \sup_{\Omega} |\nabla_{\mathbb{R}^3} W|^{1-\alpha}.
\end{equation}  Here, the constants $C > 0$ and $\alpha\in (0,1)$ depend on
$\delta,R,\eta,T_1,T_2$, but do not depend on $\mathfrak{u}$ and $z$. 

Our next goal is to transform inequality \eqref{eq: logunov
  malinnikova L infty bound} into an
inequality of the same form (with different constants) involving
$L^2$ norms instead of $\sup$ norms. To this end we adapt the strategy used in
\cite[Section 2]{burqmoyano2021spectralestimates}.

First of all, we claim that either $\mathfrak{u}=0$ on $\mathcal{B}_{\mathbb{H}}(z,R)$, or
\[
  {\rm vol}(\{(X,Y)\in
  F,\nabla_{\mathbb{R}^3}W(t=0,X,Y)=0\})=0.
\]
Indeed, letting $Z=\{(X,Y)\in
  F,\nabla_{\mathbb{R}^3}W(t=0,X,Y)=0\}$, if the
  volume of $Z$ is non-empty, then the Logunov-Mallinikova estimate
  yields, for some $\alpha(Z)\in (0,1)$,
  \[
    \sup_{\mathcal{K}} |\nabla_{\mathbb{R}^3} W| \leq C(Z)
    \sup_{Z} |\nabla_{\mathbb{R}^3} W|^{\alpha(Z)} \sup_{\Omega}
    |\nabla_{\mathbb{R}^3} W|^{1-\alpha(Z)}=0.
  \]
  In turn, given the definition of $w$ and $W$, if
  $W$ vanishes on $K$ then $\mathfrak{u}$ vanishes on
  $B_{\mathbb{H}}(z,R)$, and then the initial claim is immediate. From
  now on we suppose that $\mathfrak{u}$ is not identically zero on $\mathcal{B}_{\mathbb{H}}(z,R)$.

  Let now $\epsilon>0$
small enough. Let $$a = \left( \epsilon \frac{\sup_{\mathcal{K}}
    |\nabla_{\mathbb{R}^3} W|}{\sup_{\Omega} |\nabla_{\mathbb{R}^3}
    W|^{1-\alpha}} \right)^\frac{1}{\alpha}, \quad \text{where}\quad F' =
\{(X,Y) \in F, |\nabla_{\mathbb{R}^3} W(t=0, X, Y)| \leq a\}.$$
Let $E' = \{0\} \times F'$. For $\epsilon>0$ small
enough one has $\vol_{\mathbb{R}^3}(F') \leq \frac{\delta}{2}$, since
$F'$ increases with $\epsilon$ and the intersection over
$\epsilon\in (0,1)$ of the $F'(\epsilon)$ is $Z$. 
Thus \begin{equation} \label{eq: L2 bound from below}\begin{aligned}
    \int_{F} |\nabla_{\mathbb{R}^3} W(t=0, X, Y)|^2 dX
    dY &\geq \int_{F \setminus F'} |\nabla W(t=0,
    X, Y)|^2 dXdY \\ &\geq a^2 \frac{\delta}{2}\\ &= \frac{\delta}{2}
    \left( \epsilon \frac{\sup_{\mathcal{K}} |\nabla_{\mathbb{R}^3}
        W|}{\sup_{\Omega} |\nabla_{\mathbb{R}^3}
        W|^{1-\alpha}} \right)^\frac{2}{\alpha}.
  \end{aligned}
\end{equation}

Now, by definition $\nabla_{\mathbb{R}^3} W(t=0) = (\mathfrak{u} \circ
\phi^{-1},0,0)$ so that,
denoting $U = \mathfrak{u} \circ \phi^{-1}$, we get 
\begin{equation}
\label{eq: burq--moyano trick}
\begin{aligned}\int_{\mathcal{R}_1} |U(X,Y)|^2 dXdY &\leq {\rm vol}_{\mathbb{R}^3}(\mathcal{R}_1) \sup_{\mathcal{K}} |\nabla_{\mathbb{R}^3} W|^2 \\
&\lesssim \left( \int_{F} |\nabla_{\mathbb{R}^3} W(t=0, X, Y)|^2 dX dY\right)^\alpha \sup_{\Omega}|\nabla_{\mathbb{R}^3} W|^{2-2\alpha} \\
& \lesssim \left( \int_{F} |U(X, Y)|^2 dX dY\right)^\alpha \sup_{\Omega}|\nabla_{\mathbb{R}^3} W|^{2-2\alpha}. \end{aligned}\end{equation}
The implicit multiplicative constant in the last inequality depends
only on $\delta, \epsilon, T_1, T_2, R, \eta$, but does not depend on $z$ and $\mathfrak{u}$.

Since the change of coordinates $\phi$ preserves the Riemannian structure on $\mathbb{H}$,
 for every $f\in L^2(B_{\mathbb{H}}(i,R)))$,
\[
  e^{-R}\|f\|_{L^2_{\mathbb{R}^2}(B_{\mathbb{H}}(i,R))}\leq
  \|f\|_{L^2_{\mathbb{H}}(B_{\mathbb{H}}(i,R))}=\|f\circ
  \phi\|_{L^2_{\mathbb{H}}(B_{\mathbb{H}}(z,R)))}\leq
  e^{R}\|f\|_{L^2_{\mathbb{R}^2}(B_{\mathbb{H}}(i,R)))}.
  \]
Applying this to $f = U$ and $f = U
\mathbbm{1}_{\phi(\widehat{\omega})}$, we get  respectively
\begin{equation} \label{eq: density appears naturally}
  \|\mathfrak{u}\|_{L^2_\mathbb{H}(B_{\mathbb{H}}(z,R))}\leq
  e^{R}\|U \|_{L^2_{\mathbb{R}^2}(B_{\mathbb{H}}(i,R))}, \quad
  \|U\|_{L^2(F)} \leq e^{R}
  \| \mathfrak{u}\|_{L^2_\mathbb{H}(\widehat{\omega})}.\end{equation}
Thus, from \eqref{eq: burq--moyano trick} we obtain  \begin{equation}
    \label{eq: logunov malinnikova L2} \|\mathfrak{u}\|_{L^2_\mathbb{H}(B_{\mathbb{H}}(z,R))}^2 \lesssim \|\mathfrak{u}\|_{L^2_\mathbb{H}(\widehat{\omega})}^{2\alpha} \sup_{\Omega}|\nabla_{\mathbb{R}^3} W|^{2-2\alpha}.
\end{equation}

To conclude, we now adapt the Sobolev injection process of \cite[Section
2.2]{rouveyrol2023spectralH2}, itself inspired by
\cite{burqmoyano2023propagationheat}, to bound
$\sup_{\Omega}|\nabla_{\mathbb{R}^3} W|^2$. Recall that $\Omega
= (-T_2, T_2) \times \mathcal{B}_{\mathbb{H}}(i,e^{\eta}R)$. Let $T_3>T_2$ and let $\Omega^{(1)} = (-T_3,T_3) \times
\mathcal{B}_{\mathbb{H}}(i,e^{2\eta}R)$. Let $\chi\in C^{\infty}_c(\Omega^{(1)},\mathbb{R})$ be
equal to 1 on $\Omega$. By the Sobolev injection in
dimension 3, we have $$\sup_\Omega | \nabla_{\mathbb{R}^3}
W|^2 \leq \|\chi \nabla_{\mathbb{R}^3} W
\|_{H^2_{\mathbb{R}^3}(\Omega^{(1)})}^2.$$ Now, by elliptic regularity (see \cite[Section
6.3]{evans2022book}) applied to the operator $-(\partial_t^2 + Y^2
\Delta_{\mathbb{R}^2})$, which is uniformly elliptic over $\Omega^{(1)}$ as
remarked earlier, we obtain \begin{equation}
\label{eq: elliptic regularity}
\|\chi\nabla_{\mathbb{R}^3} W \|_{H^2_{\mathbb{R}^3}(\Omega^{(1)})}^2 \lesssim \|(1 - \partial_t^2 - Y^2 \Delta_{\mathbb{R}^2}) \chi\nabla_{\mathbb{R}^3}W \|_{L^2_{\mathbb{R}^3}(\Omega^{(1)})}^2.
\end{equation}
Since the weight $Y$ and its inverse are bounded on $\Omega^{(1)}$
along with their successive derivatives, we obtain
\begin{align*}
  \|(1 - \partial_t^2 - Y^2 \Delta_{\mathbb{R}^2})
  \chi\nabla_{\mathbb{R}^3}W \|_{L^2_{\mathbb{R}^3}(\Omega^{(1)})}^2&\lesssim
                                                              \sum_{p+q=1}^3\|\partial_t^p\nabla_{\mathbb{R}^2}^qW\|_{L^2_{\mathbb{R}^3}(\Omega^{(1)})}^2\\
  &\lesssim
    \sum_{p+q=1}^3\|\partial_t^p\nabla_{\mathbb{H}}^qW\|^2_{L^2_{\mathbb{R}\times
    \mathbb{H}}(\Omega^{(1)})}.
\end{align*}
Together with \eqref{eq: logunov malinnikova L2}, this concludes the
proof of the control of
$\|\mathfrak{u}\|^2_{L^2(\mathcal{R}_{\kappa})}$ in \eqref{eq:
  propagation of smallness, ball}.

To bound $\|\partial_tw\|^2_{L^2_{\mathbb{R}\times\mathbb{H}}((-\frac T2,\frac
  T2)\times
  B_{\mathbb{H}}(z,R))}=\|\partial_tW\|^2_{L^2_{\mathbb{R}\times
    \mathbb{H}}(\mathcal{K})}$, we replace the first line of
  \eqref{eq: burq--moyano trick} with the volume inequality
  \[
    \|\partial_tW\|^2_{L^2_{\mathbb{R}\times
        \mathbb{H}}(\mathcal{K})}\lesssim {\rm
      vol}(B(i,R))\sup_{\mathcal{K}}|\nabla_{\mathbb{R}^3}W|^2
  \]
  and continue from there. This concludes the proof.
\end{proof}

Another variant of ``propagation of smallness'' will be used on the
compact core of the manifold; here we are free to relax the
hyperbolicity assumption on the metric -- at the cost of a dependence
of the geometry on the constants.

\begin{lemma}\label{lem:prop_smallness_compact}
  Let $N$ be a connected, compact, smooth Riemannian surface with
  non-empty boundary $\partial N$. Let $V$ be an open neighbourhood of
  $\partial N$. Let $T>0$. There exist $C>0$ and $\alpha\in (0,1)$ such that, for any
  $\mathfrak{u}:N\to \mathbb{C}$ and any $w:\mathbb{R}\times N\to
  \mathbb{C}$ satisfying
  \[
    (w(0,x),\partial_tw(0,x))=(0,\mathfrak{u}(x))\qquad \qquad
    (\partial_t^2+\Delta)w=0\]
  one has 
  \begin{equation}
    \label{eq: propagation of smallness from boundary}
    \|\mathfrak{u}\|_{L^2(N\setminus V)} +\|\partial_t
    w\|_{L^2((-\frac{T}{2},\frac{T}{2}) \times (N\setminus V))}\\
    \leq C \|\mathfrak{u} \|_{L^2(V)}^{\alpha}\|w\|_{H^2((-T,T)\times
      N)}^{1-\alpha}. 
  \end{equation}
\end{lemma}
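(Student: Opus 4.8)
The plan is to reduce this "propagation of smallness from a neighbourhood of the boundary" statement to a quantitative unique continuation / three-ball-type estimate on the cylinder $(-T,T)\times N$, exactly in the spirit of the Carleman-based machinery of Lebeau--Robbiano and Jerison--Lebeau. First I would record that $w$ solves the elliptic equation $(\partial_t^2+\Delta_g)w=0$ on $(-T,T)\times N$ with the Cauchy data $(w,\partial_tw)|_{t=0}=(0,\mathfrak u)$; by odd reflection in $t$ (using $w(0,\cdot)=0$), $w$ extends to a solution on a two-sided cylinder, so there is no boundary-in-$t$ issue, only the lateral boundary $(-T,T)\times\partial N$. Since $V$ is an open neighbourhood of $\partial N$, choose a smooth interior "collar" hypersurface $\Sigma\subset V$ separating $\partial N$ from $N\setminus V$, so that $N\setminus V$ is contained in a compact subset $N'\Subset \mathrm{Int}(N)$ whose boundary pieces can be reached from $V$ by a finite chain of balls staying inside $\mathrm{Int}(N)$.

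The core step is a Carleman/interpolation inequality: for the Laplace-type operator $\partial_t^2+\Delta_g$ on the interior of the cylinder, one has, for any fixed compact $\mathcal K\Subset \mathcal O\Subset (-T,T)\times\mathrm{Int}(N)$ and any ball $\mathcal B\subset \mathcal O$,
\begin{equation*}
\|w\|_{H^1(\mathcal K)}\le C\,\|w\|_{H^1(\mathcal B)}^{\beta}\,\|w\|_{H^1(\mathcal O)}^{1-\beta}
\end{equation*}
with $\beta\in(0,1)$ depending only on the geometry; this is the interior propagation-of-smallness statement, which I would quote from \cite{lerousseaulebeaurobbiano2022carleman2} (or \cite[Lemmas 14.4, 14.5]{jerisonlebeau1999nodal}) rather than reprove. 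Chaining finitely many such inequalities along an overlapping cover of $N\setminus V$ by balls contained in $(-T,T)\times\mathrm{Int}(N)$, starting from a ball inside $(-T,T)\times V$, and absorbing the intermediate norms by Young's inequality, yields
\begin{equation*}
\|w\|_{H^1((-\tfrac{T}{2},\tfrac{T}{2})\times (N\setminus V))}\le C\,\|w\|_{H^1((-T,T)\times V)}^{\alpha'}\,\|w\|_{H^1((-T,T)\times N)}^{1-\alpha'}.
\end{equation*}
Here I should be slightly careful that the cover of $N\setminus V$ stays a uniform distance from $\partial N$; since $N\setminus V$ is compact and contained in the open set $\mathrm{Int}(N)\setminus\Sigma$, such a finite cover exists, with a finite number of balls depending only on $N$ and $V$.

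To finish, I must convert the two $H^1$-on-slab norms appearing above into the target quantities. For the left-hand side, restriction to $t=0$ and interior elliptic regularity for $\partial_t^2+\Delta_g$ on a slightly larger slab give $\|\mathfrak u\|_{L^2(N\setminus V)}+\|\partial_tw\|_{L^2((-T/2,T/2)\times(N\setminus V))}\lesssim \|w\|_{H^1((-\tfrac{3T}{4},\tfrac{3T}{4})\times(N\setminus V'))}$ for a slightly smaller neighbourhood $V'$, which is controlled by the previous display (after shrinking $V$ to $V'$, harmless since a smaller neighbourhood only strengthens the hypothesis... in fact one runs the chaining from $V'$ and uses $\|w\|_{H^1((-T,T)\times V')}\le\|w\|_{H^1((-T,T)\times V)}$). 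For the first factor on the right-hand side I want $\|w\|_{H^1((-T,T)\times V)}\lesssim \|\mathfrak u\|_{L^2(V)}$, which is \emph{not} true pointwise in $V$ but becomes true after enlarging $V$: using $w(0,\cdot)=0$ one has $w(t,x)=\int_0^t\partial_sw(s,x)\,ds$ and similarly for $\nabla_g w$, while $\partial_tw$ itself solves the same elliptic equation with data $(\partial_tw,\partial_t^2w)|_{t=0}=(\mathfrak u,-\Delta_g\mathfrak u)$; a Caccioppoli/energy estimate on the cylinder over $V$, combined with the wave/elliptic energy identity $\partial_t(\|\partial_tw\|^2-\|\nabla w\|^2)=0$-type bookkeeping (or simply interior elliptic estimates relating $\|w\|_{H^1}$ on a slab over $V$ to $\|\partial_t w\|_{L^2}$ on a slab over a slightly larger $V$, then to $\|\mathfrak u\|_{L^2}$ over $V$ via the Cauchy data), yields $\|w\|_{H^1((-T,T)\times V)}\lesssim \|\mathfrak u\|_{L^2(2V)}$ after one more enlargement. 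Since one is free to choose the collar and all the intermediate neighbourhoods at the outset, this chain of enlargements is finite and the final constants depend only on $N$, $V$, $T$. The right-hand global factor $\|w\|_{H^1((-T,T)\times N)}$ is then bounded by $\|w\|_{H^2((-T,T)\times N)}$, giving \eqref{eq: propagation of smallness from boundary} with $\alpha=\alpha'$.

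The main obstacle I expect is the bookkeeping at the lateral boundary: the interior Carleman estimates apply only away from $\partial N$, so one must genuinely arrange the chain of balls and the sequence of shrinking/enlarging neighbourhoods of $\partial N$ so that (i) smallness is propagated from a slab over $V$ into every component of $N\setminus V$ through balls uniformly interior to $N$, and (ii) the conversion of the "$H^1$ over a slab over $V$" quantity back to "$\|\mathfrak u\|_{L^2(V)}$" does not itself require touching $\partial N$. Both are handled by fixing, once and for all, a finite nested family $N\setminus V\Subset \Omega_1\Subset\cdots\Subset\Omega_m\Subset\mathrm{Int}(N)$ and running the interpolation inequalities along this family; the only price is that all constants and the exponent $\alpha$ depend on the geometry of $N$ and on $V$, which the statement explicitly allows.
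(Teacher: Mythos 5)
Your plan for propagating smallness in the \emph{interior} of the cylinder (chaining interior Carleman interpolation estimates over a finite cover of $(-T/2,T/2)\times(N\setminus V)$ staying away from $\partial N$) is the same kind of ingredient the paper uses (Theorem~\ref{theo: Carleman-based propagation of smallness}). The difficulty is in your final conversion step, and there the argument has a genuine gap.

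You claim $\|w\|_{H^1((-T,T)\times V)}\lesssim\|\mathfrak u\|_{L^2(2V)}$ by a ``Caccioppoli/energy estimate'' or a ``wave/elliptic energy identity $\partial_t(\|\partial_tw\|^2-\|\nabla w\|^2)=0$''. This cannot work: $(\partial_t^2+\Delta_g)w=0$ is \emph{elliptic}, not hyperbolic. The Cauchy problem for it is ill-posed (Hadamard), there is no finite speed of propagation, and the conserved quantity in your identity is the indefinite difference $\int(|\partial_tw|^2-|\nabla w|^2)$, which does not bound either term. Concretely, for $w=\int_0^\Lambda\frac{\sinh(\lambda t)}{\lambda}\,dm_\lambda u$ the $H^1$ norm over a slab grows like $e^{\Lambda T}$ while the conserved quantity stays fixed; and even over the full manifold $N$, $\|w\|_{H^1((-T,T)\times N)}$ is controlled by $\|\mathfrak u\|_{L^2(N)}$ (with an $e^{\Lambda T}$ loss), never by $\|\mathfrak u\|_{L^2}$ on a proper subdomain. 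So the quantity you feed into the chaining argument -- $\|w\|_{H^1}$ over a slab over $V$ -- is simply not a local function of $\mathfrak u|_V$, and no enlargement of $V$ short of all of $N$ repairs this.

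The correct ingredient, which the paper uses and you are missing, is a \emph{boundary} Carleman estimate (quantitative Cauchy uniqueness), stated here as Theorem~\ref{theo: Carleman boundary}. Applied on the domain $\pm[0,\tfrac T2]\times(V_{-2}\setminus V_3)$ with observation face $\mathcal X'=\{0\}\times(V_0\setminus V_1)$, it exploits that $w|_{t=0}=0$ (Dirichlet trace vanishes) and $\partial_n w|_{t=0}=\pm\mathfrak u$ (Neumann trace is the data), yielding
\[
\|w\|_{H^1(\mathcal X)}\le C\,\|\mathfrak u\|_{L^2(V_0\setminus V_1)}^{\beta}\,\|w\|_{H^1((-\tfrac T3,\tfrac T3)\times(V_{-2}\setminus V_3))}^{1-\beta}.
\]
Note that this is an interpolation inequality with a \emph{global} $H^1$ factor to the power $1-\beta$; that factor cannot be removed, and it combines multiplicatively with your interior chaining to give the final two-term product in \eqref{eq: propagation of smallness from boundary}. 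Your proposal tries to replace this Cauchy-data estimate by a non-existent a priori bound, which is where the argument fails.
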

\begin{proof}
  Let $V_0=V$; introduce a family $V_3\subset \subset V_2 \subset \subset V_1\subset
  \subset V_0\subset \subset V_{-1}\subset \subset V_{-2}$ of open
  neighbourhoods of $\partial N$; without loss of generality
  $N\setminus V_j$ is connected. 
  On the manifold $\mathcal{M}=[-T,T]\times N$, consider
  \begin{align*}
    \mathcal{U}&=(-\tfrac T2, \tfrac T2)\times (N\setminus V_2)\\
    \mathcal{V}&=(-T,T)\times N\\
    \mathcal{X}&=(-\tfrac T4, \tfrac T4)\times (V_0\setminus V_1).
  \end{align*}
  
  We are in position to apply Theorem \ref{theo: Carleman-based
    propagation of smallness} to $\nabla_{\mathcal{M}} w$, which is
  also harmonic for $\partial_t^2+\Delta_N$. This yields, for some $\beta\in (0,1)$,
  \[
    \|w\|_{H^2(\mathcal{U})}\leq
    C\|w\|_{H^1(\mathcal{X})}^{\beta}\|w\|_{H^2(\mathcal{V})}^{1-\beta}.
  \]
  Here, the left-hand side controls $\|\partial_t
    w\|^2_{L^2((-\frac{T}{2},\frac{T}{2}) \times (N\setminus V))}$
    since $H^1\subset H^2$ and $N\setminus V\subset N\setminus V_2$; it also controls
    $\|\mathfrak{u}\|_{L^2(N\setminus V)}$ by continuity of the trace.

    It remains to control $\|w\|^{\beta}_{H^1(\mathcal{X})}$ and to
    this end we apply Theorem \ref{theo: Carleman boundary} to $w$ on
    the connected components of 
    \begin{align*}
      \mathcal{M}'&=\pm [0,\tfrac T2]\times (V_{-2}\setminus V_3)\\
      \mathcal{U}'&=\pm [0,\tfrac T4)\times (V_0\setminus V_1)\\
      \mathcal{V}'&=\pm [0,\tfrac T3)\times (V_{-1}\setminus V_2)\\
      \mathcal{X}'&=\{0\}\times (V_0\setminus V_1).
    \end{align*}
    By summing the estimates on positive and negative times, we obtain
    precisely, for some $\beta\in (0,1)$ and $C>0$,
    \[
      \|w\|_{H^1(\mathcal{X})}\leq
      C\|\mathfrak{u}\|^{\beta}_{L^2(V_0 \setminus V_1)}\|w\|_{H^1((-\frac T3, \frac
        T3)\times (V_{-2}\setminus V_3))}^{1-\beta}.
    \]
    Altogether
    \[
      \|\partial_tw\|_{L^2((-\frac T2, \frac T2)\times (N\setminus V)}+\|\mathfrak{u}\|_{L^2
        (N\setminus V)}\leq C\|\mathfrak{u}\|_{L^2(V)}^{\beta}\|w\|_{H^1((-T,T)\times
        N)}^{1-\beta}.
    \]
    This concludes the proof.
  \end{proof}

  \begin{remark}\label{rem:change_exponents}
    By the usual properties of the trace operator, in the statement of
    Lemma \ref{lemma: propagation of smallness ball}, one has
    \[
      \|\mathfrak{u} \|_{L^2(\omega)}\leq \| w\|_{H^3((-T,T) \times
        B(z,e^{\eta}R))};
    \]
    likewise, in the statement of Lemma
    \ref{lem:prop_smallness_compact}, one has
    \[
    \|\mathfrak{u} \|_{L^2(V)}\leq \|w\|_{H^2((-T,T)\times
      N)}.
  \]
  Thus, in both lemmas, one can reduce the value of $\alpha$. This
  fact will be used in Section \ref{sec: compact core and general
    case}, where we will glue different estimates together, requiring
  the two values of $\alpha$ to be identical.
  \end{remark}

\subsection{Propagation of smallness in funnels and cusps}
\label{sec: funnel}

This subsection is dedicated to proving that the
propagation of smallness result of Lemma \ref{lemma: propagation of
  smallness ball} extends to funnels and cusps as long as
$\omega$ satisfies the thickness condition \eqref{ass: renormalized
  thickness condition}. In the simpler case of a funnel, the proof
consists in covering the funnel (up to a compact remainder) with balls
of a fixed radius. The balls will spill on a finite number of images
of the funnel by elements of its corresponding Fuchsian group, as
depicted in Figure \ref{fig:cover_balls}.

We recall here the principles behind ball covering in a metric
space. Let $E\subset \mathbb{H}$ be an arbitrary set. To cover $E$
with balls of radius $R$, pick a \emph{saturated $R$-separated set}
$(x_i)_{i\in \mathcal{I}}$, that is, among the subsets of $E$ for
which any two distinct points $x_i\neq x_j$ satisfy
$\dist_{\mathbb{H}}(x_i,x_j)\geq R$, one that is maximal for the
inclusion. By design, the family $(x_i)_{i\in \mathcal{I}}$ satisfies
the following properties:
\begin{itemize}
\item This family is locally finite; in particular (because $\mathbb{H}$ is paracompact) $\mathcal{I}$ is either
  finite or countably infinite.
\item $E$ is contained in $\cup_{i\in
    \mathcal{I}}B_{\mathbb{H}}(x_i,R)$, which itself is contained in
  the $R$-neighborhood of $E$.
\item Given $i\in \mathcal{I}$, the intersection number
  \[
    \#\{j\in \mathcal{I},B_{\mathbb{H}}(x_i,R)\cap
    B_{\mathbb{H}}(x_j,R)\neq \emptyset\}=\#\{j\in
    \mathcal{I},\dist_{\mathbb{H}}(x_i,x_j)<2R\}\]
  is bounded from above by
  \[
    \frac{\sinh^2(\frac{5R}{4})}{\sinh^2(\frac
      R4)}=(1+e^{-\frac
      R2}+e^{-R}+e^{-\frac{3R}{2}}+e^{-2R})^2e^{2R}\leq 25e^{2R}.
  \]
  Indeed, the balls of radius $R/2$ centered at the $x_j$'s are
  disjoint, and they are all contained in a ball centered at $x_i$ of
  radius $5R/2$; the volume of a ball of radius $r$ is
  $4\pi\sinh^2(\tfrac r2)$.
\item More generally, for every fixed $r>0$ one has a uniform bound of
  the type
  \begin{equation}\label{eq:ctrl_number_balls}
    \#\{j\in
    \mathcal{I},\dist_{\mathbb{H}}(x_i,x_j)<r\}<C(r,R)
  \end{equation}
\end{itemize} as soon as a volume doubling estimate like \eqref{eq: volume doubling intro} holds.

We are now ready to adapt Lemma \ref{lemma: propagation of smallness ball} to the case of funnels. 
Recall now that a \emph{funnel} $F$ is a ``left half'' of
$\mathbb{H}/\{z\sim e^{\ell}z\}$: a fundamental domain is of the form
\[
  D_F=\{x+iy,x<0<y,1\leq x^2+y^2<e^{\ell}\}
\]
where $\ell$ is the length of the geodesic delimiting $F$.
\begin{lemma}
  \label{lem:propag_smallness_funnels}
  Let $F$ be a hyperbolic funnel and let $R,\delta,T,\eta$ be positive constants. There exists
  a compact set $K\subset F$ and constants $C>0,\alpha\in (0,1)$ such
  that the following is true.
  
  Let $\omega\subset F$ satisfy
  \[
    \forall x\in F, \quad \vol(B(x,R)\cap \omega)\geq \delta
  \]
  and let $\mathfrak{u}:F\to \mathbb{C}$, as well as
  $w:\mathbb{R}\times F \to \mathbb{C}$ satisfy
  \[
    (\partial_t^2 +
    \Delta_\mathbb{H}) w = 0, \quad \partial_t w(t=0) = \mathfrak{u},
    \quad \nabla_\mathbb{H} w(t=0) = 0.\]
  Then
  \begin{equation}
    \label{eq: propagation of smallness, funnel}
    \|\mathfrak{u}\|_{L^2(F\setminus K)} +\|\partial_t
    w\|_{L^2((-\frac{T}{2},\frac{T}{2}) \times (F\setminus K)))} \leq C \|\mathfrak{u} \|_{L^2(\omega)}^{\alpha}\| w\|_{H^3((-T,T) \times F)}^{1-\alpha}.
  \end{equation}
\end{lemma}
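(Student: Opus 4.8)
The plan is to deduce Lemma~\ref{lem:propag_smallness_funnels} from Lemma~\ref{lemma: propagation of smallness ball} by covering the funnel with hyperbolic balls of fixed radius and summing the local estimates. First I would work in the fundamental domain $D_F=\{x+iy : x<0<y,\ 1\le x^2+y^2<e^\ell\}$ and lift $\mathfrak{u}$, $w$ to $\mathbb{H}$ periodically under the cyclic group $\langle z\mapsto e^\ell z\rangle$; these liftings $\mathfrak u^{\sharp}$, $w^{\sharp}$ still solve $(\partial_t^2+\Delta_{\mathbb H})w^{\sharp}=0$ with the same Cauchy data, and the $L^2$/$H^3$ norms of the liftings over a set $E$ that meets only boundedly many translates of $D_F$ are comparable to the corresponding norms of $\mathfrak u$, $w$ over the projection of $E$ to $F$. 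Then I would choose $K\subset F$ to be a large enough compact ``truncation'' of the funnel so that $F\setminus K$ lies in the region where $D_F$ is ``thin'' (i.e.\ a bounded hyperbolic distance from the geodesic bounding $F$), and build a saturated $R$-separated family $(x_i)_{i\in\mathcal I}$ in $F\setminus K$, with the bounded-overlap property \eqref{eq:ctrl_number_balls} guaranteed by the hyperbolic volume doubling estimate.

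Next, for each $i$ I would apply Lemma~\ref{lemma: propagation of smallness ball} with $z=x_i$, $\widehat\omega=\omega\cap B(x_i,R)$ (which has hyperbolic volume $\ge\delta$ by the thickness hypothesis on $\omega$), $\mathfrak u=\mathfrak u^{\sharp}$ and $w=w^{\sharp}$, provided the enlarged ball $B(x_i,e^\eta R)$ still sits inside the region of $\mathbb H$ over which the liftings are defined and meet boundedly many translates of $D_F$ — this is exactly why $K$ must be chosen so that $F\setminus K$ is a definite distance from the boundary geodesic. This gives, for each $i$,
\[
  \|\mathfrak u^{\sharp}\|_{L^2(B(x_i,R))}+\|\partial_t w^{\sharp}\|_{L^2((-\tfrac T2,\tfrac T2)\times B(x_i,R))}\le C\,\|\mathfrak u^{\sharp}\|_{L^2(\omega\cap B(x_i,R))}^{\alpha}\,\|w^{\sharp}\|_{H^3((-T,T)\times B(x_i,e^\eta R))}^{1-\alpha}
\]
with $C,\alpha$ uniform in $i$. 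To sum, I would square, use that $F\setminus K\subset\bigcup_i B(x_i,R)$, apply Hölder's inequality in the form $\sum_i a_i^{\alpha}b_i^{1-\alpha}\le(\sum_i a_i)^{\alpha}(\sum_i b_i)^{1-\alpha}$ (valid since $\alpha\in(0,1)$), and then invoke the bounded-overlap of both the balls $B(x_i,R)$ and the enlarged balls $B(x_i,e^\eta R)$ — which controls $\sum_i\|\mathfrak u^{\sharp}\|_{L^2(\omega\cap B(x_i,R))}^2\lesssim\|\mathfrak u^{\sharp}\|_{L^2(\omega^{\sharp})}^2$ and $\sum_i\|w^{\sharp}\|_{H^3(\cdots)}^2\lesssim\|w^{\sharp}\|_{H^3((-T,T)\times(\text{nbhd of }F\setminus K))}^2$. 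Translating back from the liftings to $F$ via the norm-comparison and using that $\omega^{\sharp}$ projects into $\omega$ then yields \eqref{eq: propagation of smallness, funnel}, possibly after lowering $\alpha$ as in Remark~\ref{rem:change_exponents} to absorb the trace inequality $\|\mathfrak u\|_{L^2(\omega)}\le\|w\|_{H^3}$.

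The main obstacle is the geometric bookkeeping at the ``thin'' end of the funnel: one must check that the enlarged balls $B(x_i,e^\eta R)$ used in Lemma~\ref{lemma: propagation of smallness ball} still map injectively (or with bounded multiplicity) down to $F$ and stay away from the part of $F$ near the bounding geodesic where the fundamental domain becomes comparable in size to $R$ — this is precisely what dictates the choice of the compact set $K$ and why one cannot take $K=\varnothing$. A secondary technical point is ensuring the overlap bound \eqref{eq:ctrl_number_balls} for the enlarged radius $e^\eta R$ (not just $R$), which again follows from volume doubling but must be stated with a constant $C(e^\eta R,R)$ independent of the point; and one must track that all constants produced — from the covering, from Hölder, from the lifting norm-comparisons — depend only on $R,\delta,T,\eta$ and the funnel $F$, not on $\omega$, $\mathfrak u$ or $w$.
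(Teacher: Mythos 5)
Your proposal matches the paper's proof step for step: periodic lift of $\mathfrak{u}$, $w$, $\omega$ to the half-plane $\{\operatorname{Re}z<0\}$, an $R$-separated ball covering of $D_F$ away from a compact neighbourhood $K$ of the boundary geodesic, application of Lemma~\ref{lemma: propagation of smallness ball} in each ball with constants uniform over the centres, and summation via the bounded-overlap property, the two-sided $L^2$-norm comparison \eqref{eq:comparison_L2_lifts_funnel} between lifted and original functions, and H\"older's inequality. The reason you give for removing $K$ (keeping the enlarged balls inside the domain of the lift and ensuring they meet only boundedly many translates of $D_F$) is exactly the one used in the paper.
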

\begin{proof}
  To begin with, let us ``unroll'' the functions $\mathfrak{u}$ and
  $w$, from being defined on $F$ and $\mathbb{R}\times F$
  respectively, to $\{z\in \mathbb{H},{\rm Re}(z)<0\}$ and
  $\mathbb{R}\times\{z\in \mathbb{H},{\rm Re}(z)<0\}$, such that they
  are invariant under $z\mapsto e^{\ell}z$. We
  denote these extensions $\widetilde{\mathfrak{u}}$ and
  $\widetilde{w}$. We also extend $\omega$ periodically into $\widetilde{\omega}$.

  Now we cover $D_F$ with balls of radius $R$ as described
  previously. Let $x\in D_F$ and let $L>0$ small enough so that
  $B_{\mathbb{H}}(x,2R)\subset \{z\in \mathbb{H},{\rm
    Im}(z)<L^{-1}\}$. Then, by Lemma \ref{lemma: propagation of smallness ball},
  \begin{equation}
  \|\widetilde{\mathfrak{u}}\|_{L^2(B(x,R))} +\|\partial_t
    \widetilde{w}\|_{L^2((-\frac{T}{2},\frac{T}{2}) \times B(x,R))}
    \leq C \|\widetilde{\mathfrak{u}}
    \|_{L^2(\widetilde{\omega}\cap B(x,R))}^{\alpha}\| \widetilde{w}\|_{H^3((-T,T) \times B(x,2R))}^{1-\alpha}.
  \end{equation}
  We insist on the fact that $C$ and $\alpha$ do not depend on $x$.

  Our goal is now to sum the last estimate over balls which cover $D_F$
  minus a compact set. Letting $(x_j)_{j\in \mathcal{I}}$ be a
  maximal $R$-separated set of $D_F$, there exists a finite subset
  $\check{\mathcal{I}}$ of indices and some small $L > 0$ such that
    \[B_{\mathbb{H}}(x_j,2R) \nsubseteq \{z\in \mathbb{H},{\rm
    Im}(z)<L^{-1}\}, \, i \in \check{\mathcal{I}}.\] Denoting
$\mathcal{J}=\mathcal{I}\setminus \check{\mathcal{I}}$, the balls $\{B(x_j,R),i\in \mathcal{J}\}$ cover $D_F$ except for a bounded neighborhood
$K$ of the boundary $\{{\rm Im}(z)=L^{-1}\}$. One then has,
by the covering property
\[
  \|\widetilde{\mathfrak{u}}\|^2_{L^2(D_F\setminus K)}\leq \sum_{j\in
    \mathcal{J}}\|\widetilde{\mathfrak{u}}\|^2_{L^2(B(x_j,R))}
\]
and, by the bounded intersection number property,
\[
  \sum_{j\in
    \mathcal{J}}\|\widetilde{\mathfrak{u}}\|^2_{L^2(B(x_j,R)\cap
    \widetilde{\omega})}\leq C(R)\|\widetilde{\mathfrak{u}}\|^2_{L^2(V_R\cap
    \widetilde{\omega})}
\]
where $V_R$ is the $R$-neighbourhood of $D_F\setminus K$. In turn,
since $V_R$ intersects a finite number of copies of $D_F$, for every
periodic function $\tilde{v}$ which lifts a square-integrable function $v:D_F\to \mathbb{C}$,
\begin{equation}\label{eq:comparison_L2_lifts_funnel}
  \|v\|^2_{L^2(B(x_j,R))}\leq \|\Tilde{v}\|^2_{L^2(B_\mathbb{H}(x_j,R))}\leq
  C(R)\|v\|^2_{L^2(B(x_j,R))}.
\end{equation}
Thus
  \[
  \sum_{j\in
    \mathcal{J}}\|\widetilde{\mathfrak{u}}\|^2_{L^2(B(x_j,R)\cap
    \widetilde{\omega})}\leq C\|\widetilde{\mathfrak{u}}\|^2_{L^2(D_F\cap
    \widetilde{\omega})}
\]
as well as
\begin{equation*}
  \sum_{j\in \mathcal{J}}\|\widetilde{w}\|_{H^3((-T,T) \times B(x_j,2R))}
  \leq C(R)\|\widetilde{w}\|_{H^3((-T,T) \times D_F)}^2.
\end{equation*}
Identifying $D_F$ with $F$ and $D_F\cap \widetilde{\omega}$ with
$\omega$, by Hölder's inequality, the proof is complete.
\end{proof}
\begin{figure}
  	\centering
	\includegraphics[width=0.9\textwidth]{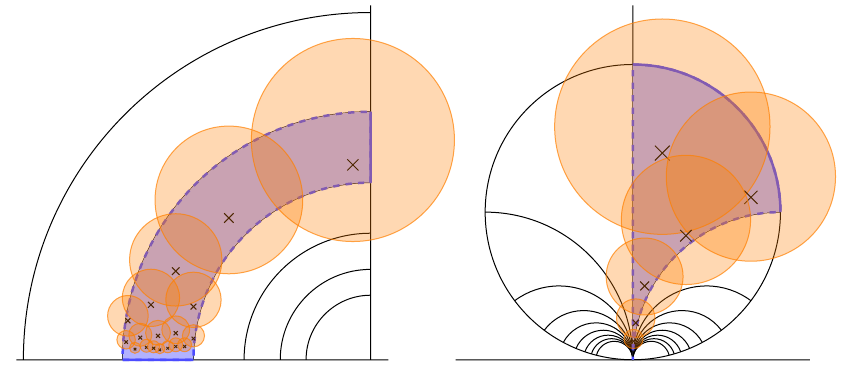}
	\caption{Covering a fundamental domain (blue) of a funnel (left) and
    a cusp (right) with balls of
    fixed radius (orange). The centres of the balls are separated: none
    of the
    centres belong to another ball.\\In the case of a funnel, the
    balls spill over at most four copies of the fundamental domain. In
    the case of a cusp, as the centres of the balls move towards the
    tip, the balls cover more and more copies.}
  \label{fig:cover_balls}
\end{figure}

Our next task is to treat \emph{cusps} in the same way; a cusp is an
``upper half'' of $\mathbb{H}/\{z\sim z+1\}$; a fundamental domain is
\[
  D_C=[-\tfrac 12, \tfrac 12)\times (\ell^{-1},+\infty)
\]
where $\ell$ is the length of the limit horocycle.

With respect
to the proof of Lemma \ref{lem:propag_smallness_funnels}, cusps
present an apparent difficulty: a covering by balls of the fundamental domain of a
cusp must spill over a neighborhood of size $R$ of the domain,
and this neighborhood intersects an arbitrarily large number of
copies of $D_C$. This difficulty is non-essential:
one can bound $L^2$ norms, from above and below, in a way which grows
linearly with the number of intersected domains.

\begin{lemma}
  \label{lem:propag_smallness_cusps}
  Let $\mathcal{C}$ be a hyperbolic cusp and let $R,\delta,T,\eta$ be positive constants. There exists
  a compact set $K\subset \mathcal{C}$ and constants $C>0,\alpha\in (0,1)$ such
  that the following is true.
  
  Let $\omega\subset \mathcal{C}$ satisfy
  \[
    \forall x\in \mathcal{C}, \quad \vol(B(x,R)\cap \omega)\geq \delta
    \vol(B(x,R))
  \]
  and let $\mathfrak{u}:\mathcal{C}\to \mathbb{C}$, as well as
  $w:\mathbb{R}\times \mathcal{C} \to \mathbb{C}$ satisfy
  \[
    (\partial_t^2 +
    \Delta_\mathbb{H}) w = 0, \quad \partial_t w(t=0) = \mathfrak{u},
    \quad \nabla_\mathbb{H} w(t=0) = 0.\]
  Then
  \begin{equation}
    \label{eq: propagation of smallness, cusp}
    \|\mathfrak{u}\|_{L^2(\mathcal{C}\setminus K)} +\|\partial_t
    w\|_{L^2((-\frac{T}{2},\frac{T}{2}) \times (\mathcal{C}\setminus K)))}\leq C \|\mathfrak{u} \|_{L^2(\omega)}^{\alpha}\| w\|_{H^3((-T,T) \times \mathcal{C})}^{1-\alpha}.
  \end{equation}
\end{lemma}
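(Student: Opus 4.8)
The strategy is the one used for Lemma~\ref{lem:propag_smallness_funnels}, with one extra layer of bookkeeping. I would first periodically unroll $\mathfrak{u}$, $w$ and $\omega$ from $\mathcal{C}\cong(\mathbb{R}/\mathbb{Z})\times(\ell^{-1},+\infty)$ to functions $\widetilde{\mathfrak{u}},\widetilde{w}$ and a set $\widetilde{\omega}$ on the strip $\{z\in\mathbb{H}:{\rm Im}(z)>\ell^{-1}\}$, invariant under $z\mapsto z+1$; here $\widetilde{w}$ still solves $(\partial_t^2+\Delta_{\mathbb{H}})\widetilde{w}=0$ with $\partial_t\widetilde{w}(t{=}0)=\widetilde{\mathfrak{u}}$, $\nabla_{\mathbb{H}}\widetilde{w}(t{=}0)=0$. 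Then I would cover the fundamental domain $D_{\mathcal{C}}$, minus a compact remainder, by hyperbolic balls of a \emph{fixed} radius $\rho$ chosen below, apply Lemma~\ref{lemma: propagation of smallness ball} to $\widetilde{\mathfrak{u}},\widetilde{w}$ in each of them, and sum via Hölder's inequality. Unlike for funnels, the injectivity radius of $\mathcal{C}$ vanishes at the tip, so a ball $B_{\mathbb{H}}(x,\rho)$ centred at large height $y$ wraps over a number $N(x)$ of copies of $D_{\mathcal{C}}$ that grows linearly in $y$; this is the one genuinely new point and it forces two adjustments.

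First, the radius must be taken \emph{larger} than the thickness radius $R$: since $R$-thickness implies $\rho$-thickness for every $\rho\ge R$ (by the doubling estimate \eqref{eq: volume doubling intro}), I am free to fix $\kappa\in(\tanh R,1)$ and then $\rho$ so large that $\tfrac{1+\kappa\tanh\rho}{1-\kappa\tanh\rho}>e^{2R}$ and $\tanh\rho\ge\tfrac12$. Second, one must check that the volume hypothesis of Lemma~\ref{lemma: propagation of smallness ball} holds \emph{uniformly}, i.e.\ that $\vol_{\mathbb{H}}(\widetilde\omega\cap B_{\mathbb{H}}(x,\rho))\ge\delta'$ for some $\delta'$ independent of $x$. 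This is where the unbounded multiplicity actually \emph{helps}. Writing $B_{\mathbb{H}}(x,\rho)$ in the Euclidean model, the covering map $\pi$ restricted to it has multiplicity depending only on the height of the target, and on the equatorial subband $S_x^\kappa$ (heights within $\kappa\,y\sinh\rho$ of $y\cosh\rho$) this multiplicity is at least $\tfrac12\sqrt{1-\kappa^2}\,y\sinh\rho$. By the choice of $\kappa,\rho$, the band $S_x^\kappa$ contains an honest cusp ball $B_{\mathcal{C}}(q,R)$ with $q$ at height comparable to $y\cosh\rho$; applying $R$-thickness in $B_{\mathcal{C}}(q,R)$ together with $\vol_{\mathcal{C}}(B_{\mathcal{C}}(q,R))\gtrsim_R 1/\mathrm{height}(q)$ gives
\[
  \vol_{\mathbb{H}}\bigl(\widetilde\omega\cap B_{\mathbb{H}}(x,\rho)\bigr)
  =\int_{\omega\cap B_{\mathcal{C}}(\pi(x),\rho)} m\,\mathrm{d}\vol_{\mathcal{C}}
  \;\gtrsim_R\; y\sinh\rho\cdot\delta\cdot\frac{1}{y\cosh\rho}\;\gtrsim_R\;\delta\tanh\rho\;\ge\;\delta',
\]
the two factors $y$ cancelling. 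Lemma~\ref{lemma: propagation of smallness ball} (with radius $\rho$, threshold $\delta'$, enlargement $\eta$) then applies to $\widetilde{\mathfrak{u}},\widetilde{w},\widehat\omega=\widetilde\omega\cap B_{\mathbb{H}}(x,\rho)$, provided $B_{\mathbb{H}}(x,e^\eta\rho)\subset\{{\rm Im}>\ell^{-1}\}$; this holds once the height of $x$ exceeds a threshold $Y^\ast=Y^\ast(\rho,\eta,\ell)$, and one takes $K=\mathcal{C}\cap\{y\le Y^\ast\}$, slightly enlarged so that the balls one keeps cover $\mathcal{C}\setminus K$.

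It remains to sum. Since one ball $B_{\mathbb{H}}(x,\rho)$ with $x$ high in $D_{\mathcal{C}}$ already projects onto a whole collar of $\mathcal{C}$, it is enough to keep one ball per geometrically spaced height level $Y^\ast e^{k\sigma}$ ($\sigma<2\rho$, $k\ge 0$): their projections cover $\mathcal{C}\setminus K$ with bounded overlap, and — crucially — each point of $\mathbb{H}$ lies in at most $C_\rho$ of the lifted balls (two of them meet only when their height ratio is $<e^{2\rho}$). Applying Lemma~\ref{lemma: propagation of smallness ball} in each ball, squaring, and converting the lifted $L^2$ and $H^3$ norms to quotient norms as in \eqref{eq:comparison_L2_lifts_funnel} — where the comparison constant is no longer bounded but is controlled \emph{linearly in the number $N_j$ of intersected copies}, the covering multiplicity being between $1$ and $N_j$ — one obtains one inequality per level. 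Summing these, the left-hand side recovers $\|\mathfrak{u}\|_{L^2(\mathcal{C}\setminus K)}+\|\partial_t w\|_{L^2((-\frac T2,\frac T2)\times(\mathcal{C}\setminus K))}$ by the covering property, and the right-hand side is treated by Hölder's inequality, the linear growth of the comparison constants being reabsorbed against the bounded overlap of the lifted balls. Identifying $D_{\mathcal{C}}$ with $\mathcal{C}$ gives \eqref{eq: propagation of smallness, cusp}; as in Remark~\ref{rem:change_exponents} one may then freely lower the resulting $\alpha$.

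I expect the delicate step to be precisely the bookkeeping of the multiplicity $N_j\simeq y_j$: it must simultaneously be exploited to keep the volume hypothesis of Lemma~\ref{lemma: propagation of smallness ball} alive (the equatorial multiplicity $\simeq y$ compensating the $\simeq 1/y$ decay of the volume of a fixed-radius cusp ball, as above) and be controlled tightly enough on summation not to degrade the final estimate — which is why one insists on a single, linear dependence of the lift-to-quotient comparison on the number of intersected copies, and on a covering whose lifted balls have uniformly bounded overlap in $\mathbb{H}$. Everything else is a verbatim repetition of the funnel argument.
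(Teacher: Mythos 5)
Your proposal has the right structure — periodic lift to the strip $\{\mathrm{Im}\,z>\ell^{-1}\}$, ball covering of the fundamental domain, application of Lemma~\ref{lemma: propagation of smallness ball} in each lifted ball, and the key observation that the volume hypothesis can be rescued because the covering multiplicity $N\simeq y$ of a ball centred at height $y$ compensates exactly the $\simeq 1/y$ decay of $\vol(B_{\mathcal{C}}(\cdot,R))$. That cancellation argument, via the equatorial subband $S_x^\kappa$, is correct and is essentially the same idea as the paper's (the ball-inclusion estimate $B_{\mathbb{H}}(z+k,\tfrac R2)\subset B_{\mathbb{H}}(z,R)$ for $|k|<(N-2)c(R)$ in \eqref{eq:ctrl_utilde_cusp}).

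However, the summation step has a genuine gap. You claim that after converting each per-ball inequality to quotient norms, the per-ball constant grows like $N_j$ ("the covering multiplicity being between $1$ and $N_j$"), and that this linear growth "is reabsorbed against the bounded overlap of the lifted balls." It is not. With the one-sided comparison you invoke, the converted per-ball inequality reads
\[
  \|\mathfrak{u}\|^2_{L^2(B_{\mathcal{C}}(\pi(x_j),\rho))}\leq C\,N_j\,\|\mathfrak{u}\|^{2\alpha}_{L^2(\omega\cap B_{\mathcal{C}}(\pi(x_j),\rho))}\|w\|^{2(1-\alpha)}_{H^3((-T,T)\times B_{\mathcal{C}}(\pi(x_j),e^\eta\rho))},
\]
and no application of Hölder's inequality together with bounded overlap removes the factor $N_j$: a weighted Hölder produces $\bigl(\sum_j N_j\|\mathfrak{u}\|^2_{L^2(\omega\cap B_{\mathcal{C}}(\pi(x_j),\rho))}\bigr)^\alpha(\cdots)^{1-\alpha}$, and $\sum_j N_j\|\mathfrak{u}\|^2_{L^2(\omega\cap B_{\mathcal{C}}(\pi(x_j),\rho))}\simeq\int_{\omega}\mathrm{height}(q)\,|\mathfrak{u}(q)|^2\,d\vol_{\mathcal{C}}(q)$, which is a \emph{weighted} $L^2$ norm with an unbounded weight near the tip, not $\|\mathfrak{u}\|^2_{L^2(\omega)}$. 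The bounded overlap of the lifted balls in $\mathbb{H}$ only controls $\sum_j$ of the \emph{lifted} norms, and those carry the same growing multiplicity; trying to sum entirely in $\mathbb{H}$ lands you on $\|\widetilde{\mathfrak{u}}\|^2_{L^2(\widetilde{\omega}\cap\bigcup_j B_{\mathbb{H}}(x_j,\rho))}$, which is the same weighted quantity.

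What is missing is the sharp \emph{lower} bound on the lift-to-quotient comparison, which the paper proves as \eqref{eq:ctrl_utilde_cusp}: for $N\geq 3$ one has $\|\widetilde{v}\|^2_{L^2(B_{\mathbb{H}}(\widetilde z,R))}\geq 2(N-2)c(R)\|v\|^2_{L^2(B_{\mathcal{C}}(z,\tfrac R2))}$, to be paired with the trivial upper bound $\|\widetilde{v}\|^2_{L^2(B_{\mathbb{H}}(\widetilde z,R))}\leq N\|v\|^2_{L^2(B_{\mathcal{C}}(z,R))}$ from \eqref{eq:ctrl_u_cusp}. Applying the lower bound to the left-hand side of the per-ball Logunov--Malinnikova inequality and the upper bound to the right-hand side produces a factor $\tfrac{N}{2(N-2)c(R)}\lesssim 1$, i.e.\ the $N$'s cancel \emph{before} the sum over $j$, yielding per-ball inequalities with constants uniform in $j$. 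Your equatorial subband estimate is exactly this lower bound in disguise, but you exploit it only to verify the hypothesis $\vol_{\mathbb{H}}(\widehat\omega)\geq\delta'$ of Lemma~\ref{lemma: propagation of smallness ball}, not for the conversion of $\|\widetilde{\mathfrak{u}}\|_{L^2(B_{\mathbb{H}}(x_j,\rho))}$ back to a cusp-ball norm. Once you use it there as well (bounding from below on the slightly smaller cusp collar $B_{\mathcal{C}}(\pi(x_j),\tfrac\rho2)$ rather than just using multiplicity $\geq 1$), the argument closes exactly as in the paper.
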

\begin{proof}This proof will repeatedly use balls in both the
  hyperbolic space $\mathcal{H}$ and the funnel $\mathcal{C}$; we use
  subscripts accordingly.
  
  Let $\widetilde{\mathfrak{u}}:\{{\rm Im}(z)>\ell^{-1}\}\to \mathbb{C}$ be a
  horizontally periodic lift of the function $\mathfrak{u}$. Let $z\in
  \mathcal{C}$ and $\widetilde{z}$ be the corresponding point in $D_{\mathcal{C}}$, such that $B_{\mathbb{H}}(z,R)\subset \{{\rm
    Im}(z)>\ell^{-1}\}$. Let $N$ be the number of copies of
  $D_{\mathcal{C}}$ (obtained by horizontal translations by relative
  integers) which intersect $B_{\mathbb{H}}(z,R)\subset \{{\rm
    Im}(z)>\ell^{-1}\}$. Obviously there holds, for every periodic
  function $\widetilde{v}$ lifting a square-integrable function
  $v:D_{\mathcal{C}}\to \mathbb{C}$:
  \begin{equation}\label{eq:ctrl_u_cusp}
    \|\widetilde{v}\|_{L^2(B_\mathbb{H}(\Tilde{z},R))}^2\leq
    N\|v\|^2_{L^2(B_\mathcal{C}(z,R))}.
  \end{equation}
  The trivial bound in the other direction
  \[
    \|v\|^2_{L^2(B_\mathcal{C}(z,R))}\leq \|\widetilde{v}\|_{L^2(B_\mathbb{H}(\Tilde{z},R))}^2
  \]
  is unbalanced, as far as (\ref{eq:comparison_L2_lifts_funnel}) is concerned; let us prove a better bound if $N\geq 3$.
  
  We claim that there exists $c(R)>0$ such that, for $|k|<(N-2)c(R)$ one has \[B_{\mathbb{H}}(z+k,\tfrac
  R2)\subset B_{\mathbb{H}}(z,R),\] that is,
  $\dist_{\mathbb{H}}(z,z+k)<\frac R2$. To this end we recall that
  \[
    B_{\mathbb{H}}(x+iy,R)=B_{\mathbb{R}^2}(x+iy\cosh(R),y\sinh(R));
  \]
  in particular, the definition of $N$ implies that
  \[
    N-2\leq 2{\rm Im}(\widetilde{z})\sinh(R),
  \]
  and now, given $k\in \mathbb{Z}$, one has
  \[
    \cosh(\dist_{\mathbb{H}}(\widetilde{z},\widetilde{z}+k))=1+\frac{k^2}{2{\rm Im}(\widetilde{z})^2}<\cosh(\tfrac{R}{2})
  \]
  if and only if
  \[
    |k|<\sqrt{2}{\rm Im}(\widetilde{z})\sqrt{\cosh(\tfrac R2)-1};
  \]
  in particular if \[|k|<(N-2)\frac{\sqrt{\cosh(\frac
        R2)-1}}{\sqrt{2}\sinh(R)},\] one has the required bound.

  The ball inclusion above implies that
  \begin{equation}\label{eq:ctrl_utilde_cusp}
    2(N-2)c(R)\|v\|^2_{L^2(B_{\mathcal{C}}(z,\frac R2))}\leq
    \|\widetilde{v}\|^2_{L^2(B_{\mathbb{H}}(\widetilde{z},R))},
  \end{equation}
  which is now comparable with the inequality in the other direction
  if $N\geq 3$.

  We can now complete the proof of Lemma
  \ref{lem:propag_smallness_cusps} using
  (\ref{eq:ctrl_u_cusp}-\ref{eq:ctrl_utilde_cusp}) instead of
  (\ref{eq:comparison_L2_lifts_funnel}). Let
    $\widetilde{\omega}$ be the lift of $\omega$ to $\mathbb{H}$. Let us prove first that
    there exists $\delta'>0$ satisfying
    \begin{equation}\label{eq:control_mass_omegatilde}
      \forall x\in \widetilde{D_{\mathcal{C}}}, {\rm
        vol}(B_{\mathbb{H}}(x,2R)\cap \widetilde{\omega})>\delta'.
    \end{equation}
    We let $N(2R,x)$ denote the number of copies of $D_{\mathcal{C}}$
    that are intersected by $B_{\mathbb{H}}(x,2R)$. If $N(2R,x)\leq 2$
    then 
    \[
      \|\mathbbm{1}_{\widetilde{\omega}}\|_{L^2(B_{\mathbb{H}}(x,2R))}^2\geq
      \|\mathbbm{1}_{\omega}\|_{L^2(B_{\mathbb{H}}(x,2R)\cap
        D_{\mathcal{C}})}^2=\vol(B_{\mathcal{C}}(x,2R)\cap \omega);
    \]
    on the other hand 
    \[
      C(R)=\|1\|_{L^2(B_{\mathbb{H}}(x,2R))}^2\leq
      N(2R,x)\|1\|_{L^2(B_{\mathcal{C}}(x,2R))}^2=N(2R,x)\vol(B_{\mathcal{C}}(x,2R))
    \]
    so that, by hypothesis, $\vol(B_{\mathcal{C}}(x,2R)\cap
    \omega)>\delta'=\tfrac{C(R)\delta}{2}$. If now $N(2R,x)\geq 3$ then we use
    (\ref{eq:ctrl_u_cusp}-\ref{eq:ctrl_utilde_cusp}) to obtain
    \begin{align*}
      \|1_{\widetilde{\omega}}\|_{L^2(B_{\mathbb{H}}(x,2R))}^2&\geq
                                                                2(N(2R,x)-2)c(R)\|1_{\omega}\|_{L^2(B_{\mathcal{C}}(x,R))}^2\\
      &\geq 2(N(2R,x)-2)c(R)\delta
        \|1\|_{L^2(B_{\mathcal{C}}(x,R))}^2\\
      &\geq \frac{2(N(2R,x)-2)c(R)\delta}{N(R,x)}
        \|1\|_{L^2(B_{\mathbb{H}}(x,R))}.
    \end{align*}
    Since $N(2R,x)\geq N(R,x)$, we obtain finally a uniform lower
    bound.
	
    We now apply Lemma \ref{lemma: propagation of smallness
      ball} to balls of radius $2R$ centered on points of a
    $\frac{R}{2}$-separated covering $(x_i)_{i\in \mathcal{J}}$ of $D_{\mathcal{C}}$. By
    \eqref{eq:control_mass_omegatilde} one has, uniformly in $j\in
    \mathcal{J}$ except for a finite number of them (those whose balls
    intersect the boundary)
    \[
      \|\widetilde{\mathfrak{u}}\|^2_{L^2(B_{\mathbb{H}}(x_j,R))}\leq
      \|\widetilde{\mathfrak{u}}\|^2_{L^2(B_{\mathbb{H}}(x_j,2R))}\leq
      C\|\widetilde{\mathfrak{u}}\|_{L^2(B_{\mathbb{H}}(x_j,2R)\cap \widetilde{\omega})}^{2\alpha}\|\widetilde{w}\|^{2(1-\alpha)}_{H^3([-T,T]\times
        B_{\mathbb{H}}(x_j,2\eta
        R))}
    \]
    so that, by (\ref{eq:ctrl_u_cusp}-\ref{eq:ctrl_utilde_cusp}),
    \[
      \|\mathfrak{u}\|^2_{L^2(B_{\mathcal{C}}(x_j,R/2))}\leq
      C\|\mathfrak{u}\|_{L^2(B_{\mathcal{C}}(x_j,2R)\cap \omega)}^{2\alpha}\|w\|^{2(1-\alpha)}_{H^3([-T,T]\times
        B_{\mathcal{C}}(x_j,2\eta
        R))};
    \]
    finally, summing over $j$ and applying
    \eqref{eq:ctrl_number_balls} and Hölder's inequality, we obtain
    \[
      \|\mathfrak{u}\|^2_{L^2(\mathcal{C}\setminus \mathcal{K})}\leq
      C\|\mathfrak{u}\|_{L^2(\mathcal{C}\cap \omega)}^{2\alpha}\|w\|^{2(1-\alpha)}_{H^3([-T,T]\times
       \mathcal{C})}.
    \]
  
\end{proof}

\subsection{General surface with hyperbolic ends}
\label{sec: compact core and general case}
To prove Theorem \ref{theo: main result, sufficient part}, it remains
to patch together Lemmas \ref{lem:prop_smallness_compact},
\ref{lem:propag_smallness_funnels}, and
\ref{lem:propag_smallness_cusps}, and to apply them to spectral
cutoffs.

On a smooth Riemannian surface $M$ with a connected compact core and a finite number of
hyperbolic ends (either cusps or funnels), if $\omega$ satisfies
\[
  \forall x\in M, \quad \vol(B(x,R)\cap \omega)\geq \delta
\]
we obtain, by the previous lemmas, the existence of $C>0$
and $\alpha\in (0,1)$ such that for every $\mathfrak{u}:M\to
\mathbb{R}$ and $w:(-T,T)\times M\to \mathbb{R}$ satisfying
\[
  (w,\partial_tw)(0,x)=(0,\mathfrak{u}(x)) \qquad \qquad
  (\partial_t^2+\Delta)w=0
\]
one has
\begin{equation}\label{eq:propag_smallness_global}
  \|\mathfrak{u}\|_{L^2(M)}\leq
  C\|\mathfrak{u}\|^{\alpha}_{L^2(\omega)}\|w\|^{1-\alpha}_{H^3(M\times (-T,T))}.
\end{equation}

Indeed, after using Remark \ref{rem:change_exponents} to fix the same value
of $\alpha$ for Lemmas \ref{lemma: propagation of smallness ball}
and \ref{lem:prop_smallness_compact}, we can decompose $M$ into a
finite set of hyperbolic ends and a compact core, apply Lemmas
\ref{lem:prop_smallness_compact}, \ref{lem:propag_smallness_funnels}, and
\ref{lem:propag_smallness_cusps} to each piece, and then use Hölder's
inequality to control the sum over $M$ as in the end of the proof of
Lemmas \ref{lem:prop_smallness_compact} and
\ref{lem:propag_smallness_funnels}.

It remains to apply this inequality to the case where $\mathfrak{u}$
lies in the range of $\Pi_{\Lambda}$. In this case, by spectral decomposition,
\[
  \|w\|_{H^3(M\times (-T,T))}\leq
  C_0\Lambda^3Te^{\Lambda T}\|\mathfrak{u}\|_{L^2(M)}
\]
and, re-injecting in the left-hand-side, we obtain \eqref{eq: spec}.

\section{Spectral estimates imply thickness}
\label{sec: proof necessary condition}

In this section, we prove that thickness is necessary to spectral estimates on manifolds with Ricci curvature bounded from below. Theorem \ref{theo: main result, necessary part} generalizes the necessity of Logvinenko and Sereda's thickness condition \cite{logvinenkosereda1974equivalent} (see for example \cite[Chapter 3.4]{havinjoricke1995book_uncertaintyprinciple}). The proof is a generalization of those of \cite[Lemma 2.5]{wangwangzhangzhang2019spectralineq_heat} and \cite[Section 2.3]{rouveyrol2023spectralH2}. It is based on the fact that spectral estimates imply an observability inequality for solutions of the heat equation. The idea is then to test this observability inequality with heat kernels. The Ricci curvature lower bound implies Gaussian estimates from above and below for the heat kernel as well as estimates on the volume of balls, both of which are used in the proof.

We start with a few key inequalities on manifolds with Ricci curvature bounded from below. The first one is a volume doubling property which is a consequence of \cite{cheegergromovtaylor1982kernelestimates} (see also \cite[Theorem 1.5]{hebey1996sobolev}, \cite[Theorem 4.19]{gallothulinlafontaine2004book}): \begin{lemma}[Volume doubling]
    There exists a positive constant $C_D(n,K) > 0$ such that for any $z \in M$ and every $r > 0$, one has \begin{equation}
        \label{eq: volume doubling}
        \vol(B_z(2r)) \leq C_D e^{C_D r} \vol(B_z(r)).
    \end{equation} Iterating \eqref{eq: volume doubling} gives for any $n \in \mathbb{N}$: \begin{equation}
        \label{eq: volume dilation}
        \vol(B_z(2^n r)) \leq (C_D)^n \exp\left(C_D (2^n - 1) r\right) \vol(B_z(r)).
    \end{equation}
\end{lemma}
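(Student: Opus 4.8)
The plan is to recall the classical Bishop--Gromov volume comparison argument, which is precisely how the cited references \cite{cheegergromovtaylor1982kernelestimates,gallothulinlafontaine2004book} produce such doubling bounds. Write $n=\dim M$; for $n=1$ the statement is trivial since geodesic balls are intervals, so assume $n\geq 2$ and set $\kappa=\sqrt{K/(n-1)}$, so that the hypothesis $\Ric\geq -Kg$ reads $\Ric\geq -(n-1)\kappa^2 g$. Let
\[
  V(r)=\omega_{n-1}\int_0^r\left(\frac{\sinh(\kappa s)}{\kappa}\right)^{n-1}\,ds
\]
denote the volume of a geodesic ball of radius $r$ in the simply connected space form of constant sectional curvature $-\kappa^2$, where $\omega_{n-1}=\vol(\mathbb{S}^{n-1})$.

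First I would invoke the Bishop--Gromov inequality: since $M$ is complete with $\Ric\geq -(n-1)\kappa^2 g$, the ratio $r\mapsto \vol(B_z(r))/V(r)$ is non-increasing on $(0,\infty)$ for every fixed $z\in M$ (see \cite[Theorem 4.19]{gallothulinlafontaine2004book} or \cite{hebey1996sobolev}). In particular $\vol(B_z(2r))/\vol(B_z(r))\leq V(2r)/V(r)$ for all $r>0$, so it suffices to bound the model-space ratio.

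Next I would bound $V(2r)/V(r)$ uniformly in $r>0$. Substituting $s=2\sigma$ in the numerator,
\[
  \frac{V(2r)}{V(r)}=2\,\frac{\int_0^r\left(\sinh(2\kappa\sigma)/\kappa\right)^{n-1}\,d\sigma}{\int_0^r\left(\sinh(\kappa\sigma)/\kappa\right)^{n-1}\,d\sigma}\leq 2\Big(\sup_{0<\sigma\leq r}\tfrac{\sinh(2\kappa\sigma)}{\sinh(\kappa\sigma)}\Big)^{n-1}=2\big(2\cosh(\kappa r)\big)^{n-1}\leq 2^n e^{(n-1)\kappa r},
\]
where we used that $\sinh(2x)/\sinh(x)=2\cosh(x)$ is increasing on $(0,\infty)$, and $2\cosh(\kappa r)\leq 2e^{\kappa r}$. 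Choosing $C_D=2^n+(n-1)\kappa$, which depends only on $n$ and $K$, gives $V(2r)/V(r)\leq C_D e^{C_D r}$, hence \eqref{eq: volume doubling}.

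Finally, the iterated estimate \eqref{eq: volume dilation} follows by applying \eqref{eq: volume doubling} successively at scales $r,2r,\dots,2^{n-1}r$:
\[
  \vol(B_z(2^nr))\leq C_De^{C_D 2^{n-1}r}\vol(B_z(2^{n-1}r))\leq\cdots\leq (C_D)^n\exp\Big(C_D r\sum_{k=0}^{n-1}2^k\Big)\vol(B_z(r)),
\]
together with $\sum_{k=0}^{n-1}2^k=2^n-1$. There is no genuine obstacle here — this is a textbook computation; the only points deserving a little care are the normalization $\kappa=\sqrt{K/(n-1)}$ relating the Ricci bound to a space-form curvature, the fact that the bound on $V(2r)/V(r)$ must hold uniformly down to $r\to 0$ (which the monotonicity of $x\mapsto 2\cosh x$ handles in one stroke, with the correct $2^n$ behaviour as $r\to0$), and the harmless notational clash in \eqref{eq: volume dilation} between the dimension and the iteration index.
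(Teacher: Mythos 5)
Your proof is correct, and it is precisely the standard Bishop--Gromov derivation that the paper's cited references \cite{cheegergromovtaylor1982kernelestimates,hebey1996sobolev,gallothulinlafontaine2004book} provide; the paper itself offers no proof beyond the citations, so there is no alternative argument to compare against. The key points — the monotonicity of $r\mapsto \vol(B_z(r))/V(r)$, reducing to the model-space ratio $V(2r)/V(r)$, the identity $\sinh(2x)=2\sinh(x)\cosh(x)$ giving the uniform bound $2(2\cosh(\kappa r))^{n-1}\leq 2^n e^{(n-1)\kappa r}$ with the correct Euclidean limit $2^n$ as $r\to 0$, and the geometric-series iteration — are all handled correctly.
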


The following is a consequence of the volume doubling estimate: \begin{lemma}[Quotient of Gaussian integrals]
    \label{lemma: quotient of gaussians}
    Consider two positive numbers $\alpha, \beta$. Then \begin{equation}
        \label{eq: quotient of Gaussians lower bound}
        \inf_{z_0 \in M} \frac{\int_M e^{-\alpha d(z, z_0)^2} d\vol(z)}{\int_M e^{-\beta d(z, z_0)^2} d\vol(z)} > 0.
    \end{equation}
\end{lemma}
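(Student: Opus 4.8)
The plan is to compare both integrals dyadically against the volume of balls around $z_0$, using the volume doubling estimate \eqref{eq: volume doubling}. Write each integral as a sum over dyadic annuli: for the denominator,
\[
  \int_M e^{-\beta d(z,z_0)^2}\,d\vol(z) = \sum_{k\geq 0} \int_{B_{z_0}(2^{k+1})\setminus B_{z_0}(2^k)} e^{-\beta d(z,z_0)^2}\,d\vol(z) + \int_{B_{z_0}(1)} e^{-\beta d(z,z_0)^2}\,d\vol(z),
\]
and bound the $k$-th term from above by $e^{-\beta 4^k}\vol(B_{z_0}(2^{k+1}))$, then invoke \eqref{eq: volume dilation} to get $\vol(B_{z_0}(2^{k+1})) \leq C_D^{k+1} \exp(C_D(2^{k+1}-1))\,\vol(B_{z_0}(1))$. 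The crucial point is that $e^{-\beta 4^k}$ decays faster than any product of the form $C_D^{k}\exp(C_D 2^{k+1})$ grows, so the series $\sum_k e^{-\beta 4^k} C_D^{k+1}\exp(C_D(2^{k+1}-1))$ converges to a constant $A(\beta, C_D)$ depending only on $\beta$, $n$, $K$. Hence $\int_M e^{-\beta d(z,z_0)^2}\,d\vol(z) \leq A'(\beta,n,K)\,\vol(B_{z_0}(1))$.

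For the numerator I would simply bound from below by restricting the integral to the unit ball:
\[
  \int_M e^{-\alpha d(z,z_0)^2}\,d\vol(z) \geq \int_{B_{z_0}(1)} e^{-\alpha d(z,z_0)^2}\,d\vol(z) \geq e^{-\alpha}\vol(B_{z_0}(1)).
\]
Dividing the two bounds, the factor $\vol(B_{z_0}(1))$ cancels, leaving a quotient bounded below by $e^{-\alpha}/A'(\beta,n,K) > 0$, uniformly in $z_0$. This gives \eqref{eq: quotient of Gaussians lower bound}.

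The only real subtlety is making sure the upper bound on the denominator is genuinely uniform in $z_0$: this is exactly what \eqref{eq: volume dilation} provides, since $C_D$ there depends only on $n$ and $K$ and not on the base point. There is no serious obstacle; the one thing to be careful about is the convergence of the dyadic series, which holds because the Gaussian weight $e^{-\beta 4^k}$ beats the doubly-exponential-looking growth $\exp(C_D 2^{k+1})$ — indeed $4^k = (2^k)^2$ dominates $2^{k+1}$ for large $k$. One should also handle the trivial inner contribution from $B_{z_0}(1)$ separately (it is at most $\vol(B_{z_0}(1))$), which is harmless.
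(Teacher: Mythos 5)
Your proposal is correct and is essentially identical to the paper's own argument: both decompose the denominator over dyadic annuli, bound each annulus via \eqref{eq: volume dilation}, observe that $e^{-\beta 4^k}$ dominates $C_D^{k+1}e^{C_D(2^{k+1}-1)}$ so the series converges, and lower-bound the numerator by its restriction to $B_{z_0}(1)$, with the factor $\vol(B_{z_0}(1))$ cancelling in the ratio. No gaps.
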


\begin{proof}[Proof of Lemma \ref{lemma: quotient of gaussians}]
The idea of the proof is to compare the integral of a Gaussian centered around a point $z_0$ with the volume of the ball $B_{z_0}(1)$. We start by bounding the numerator from below:

$$\int_M e^{-\alpha d(z, z_0)^2} d\vol(z) \geq e^{-\alpha} \vol(B_{z_0}(1)).$$

We now bound the denominator from above by decomposing the integral on a sum of integrals over annuli and using the volume dilation estimate \eqref{eq: volume dilation}:

$$\begin{aligned}
    &\int_M e^{-\beta d(z,z_0)^2} d\vol(z) \\
    &\leq \int_{B_{z_0}(1)} e^{-\beta d(z,z_0)^2} d\vol(z) + \sum_{n=0}^{+\infty} \int_{B_{z_0}(2^{n+1}) \setminus B_{z_0}(2^n)} e^{-\beta d(z,z_0)^2} d\vol(z)\\
    &\leq \vol(B_{z_0}(1)) + \sum_{n=0}^{+\infty} e^{-\beta 4^n} \vol\left( B_{z_0}(2^{n+1}) \right)\\
    &\leq \vol(B_{z_0}(1)) + \left(\sum_{n=0}^{+\infty} \exp\left(2^{n+1} C_D - 4^n \beta\right) (C_D)^{n+1}\right) \vol(B_{z_0}(1)).
\end{aligned}$$ The sum converges, so that we get the $z_0$-independent lower bound: 
$$\frac{\int_M e^{-\alpha d(z, z_0)^2} d\vol(z)}{\int_M e^{-\beta d(z, z_0)^2} d\vol(z)} \geq \frac{e^{-\alpha}}{1 + \left(\sum_{n=0}^{+\infty} \exp\left(2^{n+1} C_D - 4^n \beta\right) (C_D)^{n+1}\right)}.$$ This proves Lemma \ref{lemma: quotient of gaussians}. Note that the numerator only needs to be an integral of a non-negative, radially decaying function, rather than of a Gaussian.
\end{proof}

Lower bounds on Ricci curvature also ensure the following Gaussian (in terms of the geodesic distance) upper and lower bounds on the heat kernel \cite[Theorem 2.35]{baudoin2022geometricinequalities}: \begin{lemma}[Gaussian bounds for the heat kernel]
    \label{eq: heat kernel bounds}
    Denote $p(t,z,z_0)$ the heat kernel of $M$ at time $t$ between two points $z$, $z_0$ in $M$. There exist constants $C_1, C_2$ depending only on $(n, K)$ such that the following estimates hold:
    \begin{equation}
        \label{eq: heat kernel upper bound}
        p(z, z_0, t) \leq \frac{C_1}{\vol(B_{z_0}(\sqrt{t}))} \exp\left(-\frac{d(z,z_0)^2}{5t} + C_2(t + d(z, z_0)^2) \right),
    \end{equation}
    \begin{equation}
        \label{eq: heat kernel lower bound}
        p(z, z_0, t) \geq \frac{C_1^{-1}}{\vol(B_{z_0}(\sqrt{t}))} \exp\left(-\frac{d(z,z_0)^2}{3t} - C_2(t + d(z, z_0)^2) \right).
    \end{equation}
\end{lemma}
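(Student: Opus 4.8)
The plan is to deduce both estimates from the Li--Yau parabolic gradient estimate together with the Bishop--Gromov volume comparison already recorded in \eqref{eq: volume doubling}. Under $\Ric\geq -Kg$, Li and Yau's estimate gives, for any $\alpha>1$, a bound of the form $|\nabla\log p|^2-\alpha\,\partial_t\log p\leq C(n,\alpha)(t^{-1}+K)$ for $p=p(\cdot,z_0,\cdot)$, and integrating it along space-time geodesics yields the Harnack inequality $p(z_1,z_0,t_1)\leq p(z_2,z_0,t_2)(t_2/t_1)^{n\alpha/2}\exp\!\big(\tfrac{\alpha\,d(z_1,z_2)^2}{4(t_2-t_1)}+C(t_2-t_1)\big)$ for $0<t_1<t_2$; here and below $C$ denotes a constant depending only on $(n,K)$ which may change from line to line.

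\emph{Step 1 (on-diagonal bounds).} I would first establish $c\,e^{-Ct}\leq \vol(B_{z_0}(\sqrt t))\,p(z_0,z_0,t)\leq C\,e^{Ct}$. For the upper bound, the volume doubling property \eqref{eq: volume doubling} together with Buser's Neumann--Poincar\'e inequality on geodesic balls (a consequence of $\Ric\geq -Kg$) give, by Grigor'yan's relative Faber--Krahn argument (equivalently Nash--Moser iteration), the volume-normalized on-diagonal estimate $p(z_0,z_0,t)\leq C\,e^{Ct}/\vol(B_{z_0}(\sqrt t))$ for all $t>0$. For the lower bound, I would use the conservativeness of the heat semigroup ($\int_M p(\cdot,z_0,t)\,d\vol=1$, valid under a Ricci lower bound), Cauchy--Schwarz in the form $p(z_0,z_0,2t)\geq \vol(B_{z_0}(A\sqrt t))^{-1}\big(\int_{B_{z_0}(A\sqrt t)}p(z,z_0,t)\,d\vol(z)\big)^2$, and the already-established Gaussian upper bound to control the escape mass $\int_{M\setminus B_{z_0}(A\sqrt t)}p(z,z_0,t)\,d\vol(z)\leq 1/2$ for $A$ large enough (growing at worst like $\sqrt t$), which gives the matching lower bound after reabsorbing $\vol(B_{z_0}(A\sqrt t))$ into $e^{Ct}\vol(B_{z_0}(\sqrt t))$ via \eqref{eq: volume doubling}.

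\emph{Step 2 (off-diagonal bounds).} For \eqref{eq: heat kernel upper bound}, I would run Grigor'yan's integrated-maximum-principle argument: starting from $p(z,z_0,t)=\int_M p(z,\zeta,t/2)\,p(\zeta,z_0,t/2)\,d\vol(\zeta)$, the Davies--Gaffney weighted $L^2$ bound $\|e^{\rho}e^{s\Delta}f\|_{L^2}\leq e^{s\|\nabla\rho\|_\infty^2}\|e^{\rho}f\|_{L^2}$, applied with $\rho$ a Lipschitz regularization of $\tfrac12 d(\cdot,z_0)$, combined with the on-diagonal upper bound of Step 1, produces $p(z,z_0,t)\leq C\,e^{Ct}\,(\vol(B_z(\sqrt t))\vol(B_{z_0}(\sqrt t)))^{-1/2}\exp\!\big(-\tfrac{d(z,z_0)^2}{(4+\epsilon)t}\big)$ for any $\epsilon>0$; since $B_z(\sqrt t)\subset B_{z_0}(\sqrt t+d(z,z_0))$, iterating \eqref{eq: volume doubling} replaces the symmetric volume factor by $\vol(B_{z_0}(\sqrt t))^{-1}$ up to a polynomial in $d/\sqrt t$ and a factor $e^{C(\sqrt t+d)}$, both absorbed into $\exp(C_2(t+d^2))$ at the cost of weakening the Gaussian exponent to $1/5$. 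For \eqref{eq: heat kernel lower bound}, I would apply the Harnack inequality of the preamble with $z_1=z_0$, $z_2=z$, $t_1=\theta t$, $t_2=t$, and $\theta<1/4$, $\alpha$ close to $1$, so that Step 1 yields $p(z,z_0,t)\geq c\,\theta^{n\alpha/2}\,e^{-Ct}\,\vol(B_{z_0}(\sqrt t))^{-1}\exp\!\big(-\tfrac{\alpha\,d(z,z_0)^2}{4(1-\theta)t}\big)$, whose Gaussian exponent can be pushed below $1/3$; absorbing the volume corrections exactly as in the upper bound gives \eqref{eq: heat kernel lower bound}. Alternatively the lower bound follows by chaining the on-diagonal lower bound through $N\simeq\max(1,d(z,z_0)^2/t)$ balls of radius $\sqrt t$ along a geodesic from $z_0$ to $z$, giving a factor $c^{N}$.

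\emph{Main obstacle.} The delicate point is Step 1: obtaining the \emph{volume-normalized} on-diagonal bounds, with $\vol(B_{z_0}(\sqrt t))^{-1}$ in place of the dimensional factor $t^{-n/2}$, and controlling how the non-uniformity of the doubling and Poincar\'e constants under $\Ric\geq -Kg$ (scale-invariant only up to an $e^{CR}$ correction) propagates into a purely multiplicative error of the clean form $e^{\pm C(t+d^2)}$ rather than something worse. Once Step 1 holds, Step 2 is the now-standard Li--Yau/Grigor'yan/Saloff-Coste machinery; in the write-up one may instead simply invoke the version of these bounds in \cite{baudoin2022geometricinequalities}, as is done here.
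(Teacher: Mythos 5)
The paper does not prove this lemma: it is stated and used as a black box, with the authors explicitly deferring to \cite[Theorem 2.35]{baudoin2022geometricinequalities} and the references therein (Li--Yau, Grigor'yan). Your proposal, by contrast, reconstructs a full proof sketch following the standard Li--Yau/Grigor'yan/Saloff-Coste road map: on-diagonal upper bound from doubling plus a local Poincar\'e inequality via Faber--Krahn (or equivalently from the integrated Li--Yau gradient estimate), on-diagonal lower bound from stochastic completeness and Cauchy--Schwarz, off-diagonal upper bound from Davies--Gaffney, and off-diagonal lower bound by Harnack chaining. This is essentially the content of the cited textbook theorem, and the sketch is correct, including your observation that the key technical point is converting the $e^{CR}$-corrected (rather than scale-invariant) doubling and Poincar\'e constants coming from $\Ric\geq -Kg$ into a clean multiplicative $e^{\pm C(t+d^2)}$ error and replacing $t^{-n/2}$ by $\vol(B_{z_0}(\sqrt t))^{-1}$. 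One small organizational caveat: as written, your Step 1 invokes ``the already-established Gaussian upper bound'' to control the escape mass in the on-diagonal lower bound, but the off-diagonal Gaussian upper bound is only produced in your Step 2; the standard fix is to interleave the steps (on-diagonal upper $\to$ off-diagonal upper $\to$ on-diagonal lower $\to$ off-diagonal lower), or to use the integrated Davies--Gaffney $L^2$ bound directly for the escape-mass estimate, so the gap is cosmetic rather than substantive. You also correctly note at the end that the paper's actual move is simply to cite \cite{baudoin2022geometricinequalities}; given that the lemma is a known result used as input, the citation is the appropriate choice in the paper, while your sketch is what one would write if the result had to be reproved.
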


For proof of these geometric inequalities, we refer to \cite{baudoin2022geometricinequalities} and the references therein, in particular \cite{liyau1986parabolickernel,grigor'yan2009book}.

We now consider a Borel subset $\omega \subset M$ satisfying the spectral estimate, which we recall: \begin{equation}
    \label{eq: spectral estimate-necessary}
    \forall u \in L^2, \forall \Lambda > 0, \quad \|\Pi_\Lambda u\|_{L^2(M)} \leq Ce^{C\Lambda} \|\Pi_\Lambda u\|_{L^2(\omega)}.
\end{equation}

The spectral estimate \eqref{eq: spectral estimate-necessary} implies the following final-time observability estimate from $(0,T) \times \omega$ (see \cite{miller2010directlebeaurobbiano,phungwang2013observability,wangwangzhangzhang2019spectralineq_heat,burqmoyano2023propagationheat} for proof and refined statements, in particular concerning the time set): \begin{lemma}[Observability]
        \label{lemma: observability}
        Assume $\omega$ satisfies the spectral estimate property \eqref{eq: spectral estimate-necessary} and consider $u(t, z)$ a solution of the heat equation \begin{equation}
            \label{eq: heat equation}
            \begin{aligned}
                \partial_t u - \Delta_g u &= 0 \text{ in } \mathbb{R}^+_t \times M,\\
                u|_{t=0} &= u_0 \text{ in } M.
            \end{aligned}
        \end{equation} for some $u_0 \in L^2(M)$. For any $T > 0$, there exists some constant $C_{obs}(T) > 0$ such that \begin{equation}
            \label{eq: observability}
            \int_M |u(T, z)|^2 d\vol(z) \leq C_{obs}(T) \int_0^T \int_\omega |u(s, z)|^2 d\vol(z) ds.
        \end{equation}
\end{lemma}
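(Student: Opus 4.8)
The plan is to derive the final-time observability inequality \eqref{eq: observability} from the spectral estimate \eqref{eq: spectral estimate-necessary} by the now-standard Lebeau--Robbiano / telescoping-series argument, following \cite{miller2010directlebeaurobbiano,phungwang2013observability}. First I would decompose the initial datum along the spectral measure of $\sqrt{-\Delta_g}$ into dyadic frequency shells: write $u_0 = \sum_{k \ge 0} (\Pi_{2^{k+1}} - \Pi_{2^k}) u_0$ (with the convention $\Pi_1$ absorbing the lowest block), and note that the heat flow $e^{t\Delta_g}$ commutes with every $\Pi_\Lambda$, so $\Pi_\Lambda u(t,\cdot)$ solves the same heat equation with initial datum $\Pi_\Lambda u_0$. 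The two ingredients are: (i) the spectral estimate \eqref{eq: spectral estimate-necessary}, giving $\|\Pi_\Lambda f\|_{L^2(M)} \le Ce^{C\Lambda}\|\Pi_\Lambda f\|_{L^2(\omega)}$ for any $f$; and (ii) the dissipation estimate for the high-frequency part, namely that for $f$ with $\sqrt{-\Delta_g} f \perp \Ran(\Pi_\Lambda)$ one has $\|e^{t\Delta_g} f\|_{L^2(M)} \le e^{-\Lambda^2 t}\|f\|_{L^2(M)}$, which is immediate from the functional calculus \eqref{eq: spectral measure}.

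Next I would set up the telescoping scheme on a partition of $(0,T)$. Choose a sequence of times $T = \tau_0 > \tau_1 > \tau_2 > \cdots \to T/2$ (for instance $\tau_k - \tau_{k+1}$ comparable to a geometric sequence summing to $T/2$) and, on each subinterval $(\tau_{k+1},\tau_k)$, apply the spectral estimate at threshold $\Lambda_k = 2^k$ to $u(\cdot,z)$ together with a mean-value/observability-in-time step: integrating \eqref{eq: spectral estimate-necessary} for $u(s,\cdot)$ in $s$ over a short subinterval and using the $L^2$-nonincrease of the heat semigroup on the left gives
\[
\|\Pi_{\Lambda_k} u(\tau_k,\cdot)\|_{L^2(M)}^2 \le \frac{C e^{C\Lambda_k}}{\tau_k-\tau_{k+1}} \int_{\tau_{k+1}}^{\tau_k}\int_\omega |u(s,z)|^2\,d\vol(z)\,ds.
\]
Then one estimates $\|u(\tau_k,\cdot)\|^2 \le \|\Pi_{\Lambda_k} u(\tau_k,\cdot)\|^2 + \|(\mathrm{Id}-\Pi_{\Lambda_k})u(\tau_k,\cdot)\|^2$ and propagates the high-frequency remainder forward from $\tau_{k+1}$ to $\tau_k$ using the dissipation bound $e^{-\Lambda_k^2(\tau_k-\tau_{k+1})}$, absorbing it against the next term of the sequence. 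The key algebraic point is that the gain $e^{-\Lambda_k^2(\tau_k-\tau_{k+1})}$ in the recursion beats the loss $e^{C\Lambda_k}$ from the spectral estimate precisely because $\Lambda_k^2 \gg \Lambda_k$ as $k\to\infty$; choosing $\tau_k - \tau_{k+1} \sim 2^{-k}$ (so that $\Lambda_k^2(\tau_k-\tau_{k+1}) \sim 2^k \to \infty$ while $\sum_k(\tau_k-\tau_{k+1}) = T/2$) makes the telescoping sum converge, and summing the resulting geometric series yields
\[
\|u(T/2,\cdot)\|_{L^2(M)}^2 \le C_{obs}'(T)\int_0^{T/2}\int_\omega |u(s,z)|^2\,d\vol(z)\,ds.
\]
Finally, replacing $T$ by $2T$ (equivalently, running the same argument on $(T/2,T)$ after noting $\|u(T,\cdot)\| \le \|u(T/2,\cdot)\|$, or directly rescaling the partition to land at $\tau_0 = T$) gives the stated inequality \eqref{eq: observability}.

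The main obstacle is purely bookkeeping: one must choose the time steps $\tau_k - \tau_{k+1}$ and the frequency thresholds $\Lambda_k$ so that, in the recursion $a_k := \|(\mathrm{Id}-\Pi_{\Lambda_{k-1}})u(\tau_k,\cdot)\|^2$, the high-frequency tail contracts geometrically after accounting for the $e^{C\Lambda_k}$ blowup, and so that the constants $C e^{C\Lambda_k}/(\tau_k-\tau_{k+1})$ multiplying the observation terms remain summable (after Cauchy--Schwarz in $k$ against the contraction factors). This is the classical trade-off in the Lebeau--Robbiano method and is handled exactly as in \cite[Section 3]{phungwang2013observability} or \cite{miller2010directlebeaurobbiano}; no feature of the manifold $M$ beyond self-adjointness of $-\Delta_g$ and the spectral estimate is used in this lemma, the geometric hypotheses (Ricci lower bound, heat kernel bounds) entering only in the subsequent step where \eqref{eq: observability} is tested against heat kernels.
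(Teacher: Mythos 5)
Your proposal is correct and follows exactly the Lebeau--Robbiano / Phung--Wang telescoping-series argument that the paper itself invokes: the paper does not give a self-contained proof of this lemma but refers to \cite[Lemmas 2.3 and 2.4]{wangwangzhangzhang2019spectralineq_heat} and \cite[Appendix B]{rouveyrol2023spectralH2}, which implement precisely the dyadic-frequency decomposition, spectral-estimate-plus-dissipation interplay, and summable choice of time steps you describe. Your observation that the lemma uses only self-adjointness of $-\Delta_g$ and the spectral estimate, with the geometric hypotheses entering only later, also matches the paper's framing.
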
 A proof of (a stronger version of) Lemma \ref{lemma: observability} on the flat space can be found in \cite[Lemmas 2.3 and 2.4]{wangwangzhangzhang2019spectralineq_heat}. It adapts easily to the manifold case, as explained in \cite[Appendix B]{rouveyrol2023spectralH2}.

Fix $t_K > 0$ a time such that \[-\beta_K = -\tfrac{2}{5 t_K} + 2C_2 < -4 C_D,\] for example $t_K = \frac{1}{10(C_2 + 2C_D)}$. Note that $t_K$ depends only on the dimension and curvature bound of the manifold $M$. We fix some $z_0 \in M$ and apply the observability inequality \eqref{eq: observability} at time $T = \tfrac{t_K}{2}$ for the initial condition \begin{equation}
    \label{eq: heat initial condition}
    u_0(z) = p\left(z, z_0, \frac{t_K}{2}\right), \quad z \in M.
\end{equation} By the semi-group property for the heat equation, we get \begin{equation}
    \label{eq: heat solution} u(s, z) = p\left(z, z_0, s + \frac{t_K}{2}\right), \quad s \geq 0, z \in M,
\end{equation} thus \begin{equation}
    \label{eq: observability for heat kernel}
    \int_M |p\left(z, z_0, t_K\right)|^2 d\vol(z) \leq C_{obs} \int_0^{\frac{t_K}{2}} \int_\omega \left|p\left(z, z_0, \frac{t_K}{2} + s\right)\right|^2 d\vol(z) ds.
\end{equation}

We bound the left-hand-side from below using \eqref{eq: heat kernel lower bound}:
$$\begin{aligned}
    \int_M \left|p\left(z, z_0, t_K\right)\right|^2 d\vol(z) &\geq \frac{C_1^{-2}}{\vol(B_{z_0}(\sqrt{t_K}))^2} \int_M \exp^{-2C_2 t_K - \alpha_K d(z, z_0)^2} d\mu(z) \\
    &\geq \frac{m_K}{\vol(B_{z_0}(\sqrt{t_K}))^2} \int_M e^{-\alpha_K d(z, z_0)^2} d\mu(z)
\end{aligned}$$ with $m_K$ and $\alpha_K = 2\left(C_2 + \frac{1}{3t_K}\right)$ two constants depending on the dimension and the curvature lower bound. In the rest of the proof, constants indexed by a $K$ are only dependent on those geometric features. They do not depend on $z_0$ or the parameter $R$ that will be introduced later on.

We now bound the right-hand-side from above using the Gaussian upper bound \eqref{eq: heat kernel upper bound}. We then introduce some radius $R > 0$, which will be fixed independently of $z_0$ at the end of the proof. We split the integral over $\omega$ into its $B_{z_0}(R)$ and $B_{z_0}(R)^c$ parts:
$$\begin{aligned}
    &\int_0^{t_K/2} \int_\omega \left|p\left( z, z_0, t+s \right)\right|^2 d\vol(z) ds\\
    &\leq \int_0^{t_K/2} \int_\omega \frac{C_1^2}{\vol\left(B_{z_0}\left(\sqrt{\frac{t_K}{2} + s}\right)\right)^2} e^{C_2\left(t_K + 2s\right)} \exp^{-\beta_K d(z, z_0)^2} d\vol(z)\\
    &\leq \frac{C_ 1^2 e^{2C_2 t_K}}{\vol\left(B_{z_0}\left(\sqrt{\frac{t_K}{2}}\right)\right)^2} \left( \int_{\omega \cap B_{z_0}(R)} e^{-\beta_K d(z, z_0)^2} d\vol(z) + \int_{\omega \cap B_{z_0}(R)^c} e^{-\beta_K d(z, z_0)^2} d\vol(z)\right)\\
    &\leq \frac{M_K}{\vol\left(B_{z_0}\left(\sqrt{\frac{t_K}{2}}\right)\right)^2} \left(\vol(\omega \cap B_{z_0}(R)) + e^{-\frac{\beta_K}{2}R^2} \int_{\omega \cap B_{z_0}(R)^c} e^{-\frac{\beta_K}{2}d(z, z_0)^2} d\vol(z) \right) \\
    &\leq \frac{M_K}{\vol\left(B_{z_0}\left(\sqrt{\frac{t_K}{2}}\right)\right)^2} \left(\vol(\omega \cap B_{z_0}(R)) + e^{-\frac{\beta_K}{2}R^2} \int_M e^{-\frac{\beta_K}{2}d(z, z_0)^2} d\vol(z) \right).
\end{aligned}$$

Plugging the lower and upper bounds in \eqref{eq: observability for heat kernel} yields \begin{equation}
    \label{eq: volume and gaussian integrals}
    \int_M e^{-\alpha_K d(z, z_0)^2} d\mu(z) \leq M'_K \left(\vol(\omega \cap B_{z_0}(R)) + e^{-\frac{\beta_K}{2}R^2} \int_M e^{-\frac{\beta_K}{2}d(z, z_0)^2} d\vol(z) \right).
\end{equation} where $$M'_K = \frac{M_K}{m_k} \sup_{z_0 \in M} \frac{\vol(B_{z_0}(\sqrt{t_K}))^2}{\vol\left(B_{z_0}\left(\sqrt{\frac{t_K}{2}}\right)\right)^2}$$ can be chosen independently of $z_0$ thanks to the volume doubling estimate \eqref{eq: volume doubling}.

We conclude the proof of Theorem \ref{theo: main result, necessary part} using Lemma \ref{lemma: quotient of gaussians}. By the lemma, for $\alpha = \alpha_K$ and $\beta = \frac{\beta_K}{2}$, it is possible to find some $R = R_K > 0$ such that $$e^{-\frac{\beta_K}{2} R^2} < \frac{1}{2} \inf_{z_0 \in M} \frac{\int_M e^{-\alpha_K d(z, z_0)^2} d\vol(z)}{\int_M e^{-\frac{\beta_K}{2} d(z, z_0)^2} d\vol(z)}.$$ For such an $R$, \eqref{eq: quotient of Gaussians lower bound} yields \begin{equation}
    \int_M e^{-\alpha_K d(z, z_0)^2} d\mu(z) \leq 2 M'_K \left(\vol(\omega \cap B_{z_0}(R)) \right),
\end{equation} hence we recover the thickness condition \begin{equation}
    \delta \vol(B_{z_0}(R)) \leq \vol(\omega \cap B_{z_0}(R))
\end{equation} for $$\delta = \frac{1}{2M'_K} e^{-\alpha_K R^2}.$$ $\delta$ depends only on $R$, the dimension $n$ of the manifold, and the curvature lower bound.

We finish this section by noting that the proof that observability implies a thickness assumption may generalize to other semigroups $e^{-tH}$. The only assumptions needed are the volume doubling estimate, which depends on the metric rather than the operator considered, and an integral kernel of the propagator satisfying a Gaussian pointwise bound from above and a positive radially decaying bound from below.

\appendix

\section{Statement of propagation of smallness inequalities used in the paper}
\label{sec: appendix propagation of smallness results}

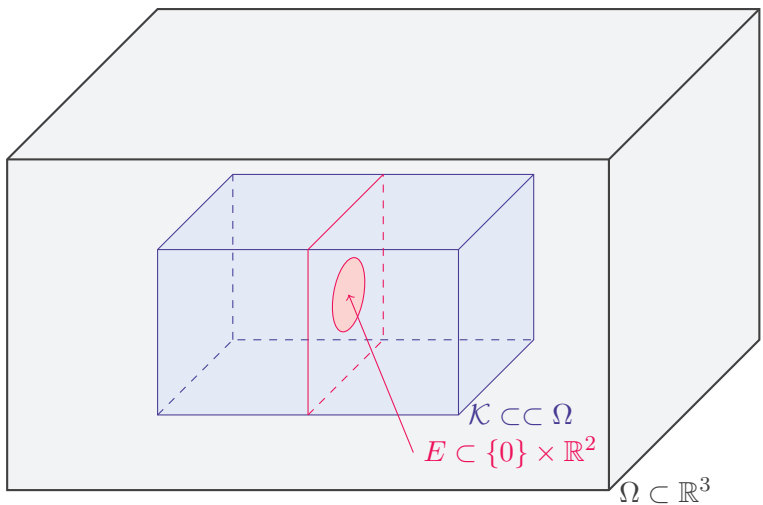
\begin{figure}[t]
    \centering
    \begin{tikzpicture}[scale=2.]
        \coordinate (A) at (0.,0.); 
        \coordinate (B) at (4.,0.);
        \coordinate (C) at (4.,2.2);
        \coordinate (D) at (0.,2.2);
        \coordinate (E) at (1.,1.); 
        \coordinate (F) at (5.,1.);
        \coordinate (G) at (5.,3.2);
        \coordinate (H) at (1.,3.2);
        
        \filldraw[color=darkgray,fill=Gray!10, thick] (A) -- (B) -- (F) -- (G) -- (H) -- (D) -- cycle;
        \draw (B) node[right] {\color{darkgray} $\Omega \subset \mathbb{R}^3$};
        \draw[color=darkgray,thick] (D) -- (C) -- (G);
        \draw[color=darkgray,thick] (C) -- (B);
    
        \coordinate (A1) at (1.,0.5); 
        \coordinate (B1) at (3.,0.5);
        \coordinate (C1) at (3.,1.6);
        \coordinate (D1) at (1.,1.6);
        \coordinate (E1) at (1.5,1.); 
        \coordinate (F1) at (3.5,1.);
        \coordinate (G1) at (3.5,2.1);
        \coordinate (H1) at (1.5,2.1);
        
        \filldraw[color=BlueViolet,fill=NavyBlue!10] (A1) -- (B1) -- (F1) -- (G1) -- (H1) -- (D1) -- cycle;
        \draw (B1) node[right] {\color{BlueViolet} $\mathcal{K} \subset\subset \Omega$};
        \draw[BlueViolet] (D1) -- (C1) -- (G1);
        \draw[BlueViolet] (C1) -- (B1);
        \draw[BlueViolet,dashed] (A1) -- (E1) -- (F1);
        \draw[BlueViolet,dashed] (E1) -- (H1);

        \coordinate (J) at (2.,0.5); 
        \coordinate (K) at (2.5,1.); 
        \coordinate (L) at (2.5,2.1); 
        \coordinate (M) at (2.,1.6); 
        \coordinate (O) at (2.27,1.3); 
        \coordinate (P) at (2.7,0.25);

        \draw[dashed,OrangeRed] (J) -- (K) -- (L);
        \draw[OrangeRed] (J) -- (M) -- (L);
        \filldraw[color=OrangeRed,fill=OrangeRed!20,rotate=-10] (O) ellipse (0.1 and 0.25);
        \draw[OrangeRed,->] (P) -- (O);
        \draw (P) node[right] {\color{OrangeRed} $E \subset \{0\} \times \mathbb{R}^2$};
    \end{tikzpicture}
    \label{fig: Logunov--Malinnikova illustrated}
    \caption{An example of setting where the Logunov--Malinnikova inequality allows to propagate the smallness of gradients from a $2-$dimensional set $E$ to a larger $3-$dimensional box $\mathcal{K}$. The auxiliary $t$ introduced in the paper can be interpreted as the horizontal variable.}
\end{figure}

The goal of this appendix is to recall the different propagation of smallness inequalities that are used throughout the paper. We first present the Logunov--Malinnikova theorem on propagation of smallness for gradients of harmonic functions \cite[Theorem 5.1]{logunovmalinnikova2018qtttve_propag_smallness}, which is used crucially in charts in Section \ref{sec: propagation of smallness}.

\begin{theorem}[\cite{logunovmalinnikova2018qtttve_propag_smallness}]
\label{theo: logunov-malinnikova}Let $d\in \mathbb{N}$. There exists $c\in
(0,1)$ such that the following is true.

Let $\Lambda_1,\Lambda_2,m,\rho,\delta$ be positive constants and let
$\Omega$ be a bounded domain in $\mathbb{R}^{d+1}$. There
exist $C>0$ and $\gamma\in (0,1)$ such that, for every solution of an elliptic equation in divergence form \begin{equation}
        \Div(A\nabla v) = 0 \text{ over } \Omega
    \end{equation} where $A = (a_{i,j}(x))_{i,j}$ is a symmetric,
    uniformly elliptic matrix with Lipschitz entries: $$\Lambda_1^{-1}
    |\zeta|^2 \leq \langle A \zeta, \zeta \rangle \leq \Lambda_1
    |\zeta|^2, \quad |a_{i,j}(x) - a_{i,j}(y) | \leq \Lambda_2
    |x-y|,$$ and for every sets
    $E, \mathcal{K} \subset \Omega$
    satisfying $$C_\mathcal{H}^{d-c+\delta}(E) > m, \quad \dist(E,
    \partial\Omega) > \rho, \quad \dist(\mathcal{K}, \partial\Omega) >
    \rho,$$ one has $$\sup_{\mathcal{K}} |\nabla v| \leq C \sup_{E}|\nabla v|^\gamma \sup_{\Omega}|\nabla v|^{1-\gamma}.$$ 
\end{theorem}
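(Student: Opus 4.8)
This statement is Theorem~5.1 of \cite{logunovmalinnikova2018qtttve_propag_smallness}, so the plan is not to reprove it but to cite it; for completeness I would nonetheless indicate the structure of the argument. The first move is to reduce, by an affine change of coordinates together with the Lipschitz change of variables normalising the metric, to the situation where $A$ is comparable to the identity on balls of unit size, so that solutions of $\Div(A\nabla v)=0$ enjoy the same local regularity and three-ball estimates as harmonic functions, with constants depending only on $\Lambda_1,\Lambda_2$. One then runs the Logunov--Malinnikova machinery on $\nabla v$, which is again a (vector-valued) solution of a divergence-form elliptic equation.

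The central quantitative object is the doubling index of $\nabla v$: for a ball $B(x,r)\subset\Omega$ one sets, schematically,
\[
  N(x,r)=\log_2\frac{\sup_{B(x,2r)}|\nabla v|}{\sup_{B(x,r)}|\nabla v|}.
\]
Two ingredients are combined. First, the almost-monotonicity and quasi-additivity of $N$ under scaling, coming from the classical frequency-function and Hadamard three-ball inequalities for Lipschitz-coefficient elliptic equations; along any chain of overlapping balls of comparable radius these yield a local propagation-of-smallness inequality $\sup_{B'}|\nabla v|\lesssim\big(\sup_{B}|\nabla v|\big)^{\theta}\big(\sup_{B''}|\nabla v|\big)^{1-\theta}$ with $\theta\in(0,1)$ controlled by the doubling indices along the chain. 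Second, the Cheeger--Naber--Valtorta bound \cite{cheegernabervaltorta2015criticalsetselliptic} on the $(d-1)$-dimensional Minkowski content of the critical set $\{\nabla v=0\}$ inside a ball, in terms of the doubling index there; this is exactly what forbids the set $E$ from being ``swallowed'' by the zero set of $\nabla v$, as soon as $E$ has positive Hausdorff content $C_{\mathcal H}^{d-c+\delta}(E)>m$ for the dimensional constant $c\in(0,1)$.

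The concluding step is combinatorial: one tiles a cube containing $\mathcal K$ and $E$ by $A^{d+1}$ equal subcubes and tracks how the doubling index can be distributed among them, iterating the subdivision; using the critical-set estimate one discards the (few) subcubes on which $|\nabla v|$ is anomalously small while retaining a fixed proportion of the Hausdorff content of $E$, and thereby builds a chain of balls of comparable size joining $\mathcal K$ to a subset of $E$ of controlled content, along which the local propagation-of-smallness inequality can be composed a bounded (dimensional) number of times. Optimising the resulting exponent produces $\gamma\in(0,1)$ and $C>0$ depending only on $d,\Lambda_1,\Lambda_2,m,\rho,\delta$ and $\diam\Omega$. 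The delicate point — and the heart of \cite{logunovmalinnikova2018qtttve_propag_smallness} — is precisely this bookkeeping of the doubling index across scales combined with the sharp critical-set bounds; the rest is standard elliptic theory.
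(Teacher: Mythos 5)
Your proposal is correct and matches the paper's treatment: the paper also simply cites Theorem~5.1 of Logunov--Malinnikova and does not reprove it, and your sketch of the underlying machinery (doubling index, Cheeger--Naber--Valtorta critical-set bounds, combinatorial subdivision) coincides with the paper's own bibliographical description of the argument in Section~1.2.2. Nothing further is needed.
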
 In particular, the result holds for $\delta = c$ \emph{ie} if $E$ is a set of Hausdorff dimension $d$. An illustration of this result is presented if Figure \ref{fig: Logunov--Malinnikova illustrated} in the case $d=2, \delta=c$.

In Section \ref{sec: compact core and general case}, which is dedicated to bounds on the compact core of a manifold, we use Carleman inequality-based propagation of smallness from open sets in the spirit of Lebeau and Robbiano \cite{lebeaurobbiano1995controlechaleur}. The first of these results can be found in the textbook \cite[Theorem~9.1]{lerousseaulebeaurobbiano2022carleman2}
(see also \cite[Lemma~5.3]{lerousseaulebeaurobbiano2022carleman1} for the underlying result applied in charts). The proof is based on interior Carleman estimates applied in ordered local charts along paths connecting a finite number of points in the compact core to a point in the observation set. We state it in the specific case $l = 0$:

\begin{theorem}[\cite{lerousseaulebeaurobbiano2022carleman2}, Theorem 9.1]
    \label{theo: Carleman-based propagation of smallness}
    Let $(\mathcal{M}, g)$ be a dimensional, possibly non-compact,
    possibly with boundary, smooth Riemannian manifold. Let $\mathcal{U}, \mathcal{V}$ be two open subsets of $\mathcal{M}$ such that $\mathcal{U}$ is connected, $\overline{\mathcal{U}} \cap \partial\mathcal{M} = \emptyset$ and $\mathcal{U} \subset\subset \mathcal{V} \subset\subset \mathcal{M}$.

    Let $\mathcal{X}$ be an open subset of $\mathcal{U}$. There exist
    $C > 0$ and $\beta \in (0,1)$ such that, for every $w\in H^1(\mathcal{M})$, \begin{equation}
        \label{eq: Carleman-based propagation of smallness}
        \|w\|_{H^1(\mathcal{U})} \leq C  \left( \|\Delta_gw\|_{L^2(\mathcal{V})} + \|w\|_{L^2(\mathcal{X})} \right)^\beta\|w\|_{H^1(\mathcal{V})}^{1-\beta}.
    \end{equation}
\end{theorem}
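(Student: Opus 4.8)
The plan is to derive this inequality --- a quantitative form of unique continuation for $-\Delta_g$ --- from a chain of \emph{local three-ball inequalities} obtained through interior Carleman estimates, propagating the smallness of $w$ from $\mathcal{X}$ to all of $\mathcal{U}$ along paths inside $\mathcal{V}$. Since $\overline{\mathcal{U}}\cap\partial\mathcal{M}=\emptyset$, only \emph{interior} Carleman estimates enter (no boundary ones), so after working in coordinate charts the problem reduces to the classical Euclidean setting for a second-order elliptic operator with smooth coefficients.

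\textbf{Local three-ball inequality.} Around each point of $\overline{\mathcal{U}}$ choose a chart compactly contained in $\mathcal{V}$, in which $-\Delta_g$ becomes an elliptic operator $P=-\sum\partial_i(a_{ij}\partial_j)$ with smooth coefficients, and fix concentric balls $B_1\subset\subset B_2\subset\subset B_3$ inside it. With the weight $e^{\tau\varphi}$, $\varphi=e^{\lambda\psi}$ for $\psi$ radial with non-vanishing gradient on the annulus and $\lambda$ large, H\"ormander's sub-ellipticity condition for $P$ holds and one has
\[
\tau^{3}\|e^{\tau\varphi}v\|_{L^{2}}^{2}+\tau\|e^{\tau\varphi}\nabla v\|_{L^{2}}^{2}\leq C\|e^{\tau\varphi}Pv\|_{L^{2}}^{2}
\]
for all $v\in C^{\infty}_{c}(B_3)$ and $\tau\geq\tau_0$. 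Applying this to $v=\chi w$ with $\chi\equiv1$ near $B_2$ and supported in $B_3$, commuting $P$ past $\chi$, separating sub-level sets of $\varphi$ and optimising in $\tau$ gives, in the standard way,
\[
\|w\|_{L^{2}(B_2)}\leq C\big(\|Pw\|_{L^{2}(B_3)}+\|w\|_{L^{2}(B_1)}\big)^{\theta}\|w\|_{L^{2}(B_3)}^{1-\theta}
\]
for some $\theta\in(0,1)$ depending only on the balls and the coefficients of $P$; an interior elliptic (Caccioppoli) estimate $\|w\|_{H^{1}(B)}\lesssim\|w\|_{L^{2}(2B)}+\|\Delta_g w\|_{L^{2}(2B)}$ then upgrades both norms to $H^1$ after mildly shrinking $B_2$ and enlarging $B_3$.

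\textbf{Chaining and globalisation.} Both sides of \eqref{eq: Carleman-based propagation of smallness} are homogeneous of degree one in $w$, so we may assume $\|w\|_{H^{1}(\mathcal{V})}=1$; with $q:=\|\Delta_g w\|_{L^{2}(\mathcal{V})}+\|w\|_{L^{2}(\mathcal{X})}$, the case $q\geq1$ is immediate since then $\|w\|_{H^{1}(\mathcal{U})}\leq1\leq q^{\beta}$, so assume $q\leq1$. Fix a small ball $B_{\star}\subset\mathcal{X}$. Given $x\in\mathcal{U}$, the connectedness of $\mathcal{U}$ and the compact inclusion $\overline{\mathcal{U}}\subset\subset\mathcal{V}$ let us join $B_{\star}$ to $x$ by a path at fixed positive distance from $\partial\mathcal{V}$ and cover it by a finite chain of balls $B_{\star}=B^{(0)},\dots,B^{(N)}\ni x$, each consecutive dilated triple lying in $\mathcal{V}$ and admissible for the previous step, with $N\leq N_{\max}$ uniformly over $\overline{\mathcal{U}}$ by compactness (and, by compactness again, with a uniform exponent $\theta$). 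Iterating the three-ball inequality, bounding $\|Pw\|_{L^{2}}$ by $q$ and using $q\leq q^{\theta^{i}}$, one obtains $\|w\|_{H^{1}(B^{(i)})}\leq C_i q^{\theta^{i}}$ and hence $\|w\|_{H^{1}(B^{(N)})}\leq Cq^{\beta}$ with $\beta:=\theta^{N_{\max}}\in(0,1)$. Covering $\overline{\mathcal{U}}$ by finitely many such terminal balls and summing gives $\|w\|_{H^{1}(\mathcal{U})}\leq Cq^{\beta}=Cq^{\beta}\|w\|_{H^{1}(\mathcal{V})}^{1-\beta}$.

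\textbf{Main obstacle.} The only substantive ingredient is the local Carleman estimate of the second paragraph with an admissible weight; for $-\Delta_g$ the classical choice $\varphi=e^{\lambda\psi}$ with $|\nabla\psi|\neq0$ and $\lambda$ large does the job, so this step is standard. The rest --- the $L^2$-to-$H^1$ upgrade via elliptic regularity, the tracking of how $\theta$ degrades along the chain, and the final finite covering of $\overline{\mathcal{U}}$ --- is routine bookkeeping. A complete and more general treatment is \cite[Chapter~9]{lerousseaulebeaurobbiano2022carleman2}.
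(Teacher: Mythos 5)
The paper does not prove this theorem; it is quoted from \cite[Theorem~9.1]{lerousseaulebeaurobbiano2022carleman2}, with a one-line description of the cited argument as ``interior Carleman estimates applied in ordered local charts along paths connecting a finite number of points in the compact core to a point in the observation set.'' Your sketch reproduces exactly that strategy (interior Carleman estimate in charts, local three-ball inequality, chaining along paths with a compactness argument for uniform exponent, normalization to handle the H\"older splitting), and the bookkeeping is sound, so it is correct and consistent with the cited reference.
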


We will also need a result of propagation of smallness from the
boundary in the Dirichlet situation. We present here a specialisation
of a more general result in this spirit.
\begin{theorem}[\cite{lerousseaulebeaurobbiano2022carleman2}, Theorem 9.3]\label{theo: Carleman boundary}
  Let $(\mathcal{M},g)$ be a possibly non-compact Riemannian manifold,
  with smooth boundary $\partial \mathcal{M}$. Let
  $\mathcal{U}\subset\subset \mathcal{V}\subset \subset \mathcal{M}$
  be open, and suppose that
  $\mathcal{U}$ is connected. Let $\mathcal{X}$ be a non-empty open subset of
  $\mathcal{U}\cap \partial \mathcal{M}$.

  There exist $C>0$ and $\beta\in (0,1)$ such
  that, for all $w\in H^1_0(\mathcal{M})$,
  \begin{equation}
    \label{eq:Carleman_boundary}
    \|w\|_{H^1(\mathcal{U})}\leq C(\|\Delta_gw\|_{L^2(\mathcal{V})}+\|\partial_nw\|_{L^2(\mathcal{X})})^{\beta}\|w\|^{1-\beta}_{H^1(\mathcal{V})}.
  \end{equation}
\end{theorem}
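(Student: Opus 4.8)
The statement is the boundary–observation counterpart of the interior propagation-of-smallness inequality of Theorem \ref{theo: Carleman-based propagation of smallness}. The plan is to first propagate the smallness of the normal trace $\partial_n w$ from the boundary piece $\mathcal{X}$ into a fixed interior ball, and then to fall back on Theorem \ref{theo: Carleman-based propagation of smallness} (together with a finite chain of local estimates) to reach all of $\mathcal{U}$. The only genuinely new ingredient, compared with the interior case, is a \emph{Dirichlet boundary Carleman estimate}.

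First I would fix a point $x_0\in\mathcal{X}$ and work in boundary normal coordinates flattening $\partial\mathcal{M}$ to $\{x_n=0\}$, so that the hypothesis $w\in H^1_0(\mathcal{M})$ reads $w|_{x_n=0}=0$. Choosing a Carleman weight $\varphi$ satisfying Hörmander's sub-ellipticity condition for $-\Delta_g$, with $\partial_n\varphi|_{x_n=0}$ of a fixed sign and level sets crossing $\mathcal{X}$ transversally, the standard Dirichlet Carleman estimate reads: there exist $C,\tau_0>0$ such that for $\tau\geq\tau_0$ and $v\in H^2$ with $v|_{x_n=0}=0$,
\[
\tau^3\|e^{\tau\varphi}v\|_{L^2}^2+\tau\|e^{\tau\varphi}\nabla v\|_{L^2}^2\leq C\|e^{\tau\varphi}\Delta_g v\|_{L^2}^2+C\tau\|e^{\tau\varphi}\partial_n v\|_{L^2(\{x_n=0\})}^2 ,
\]
the key being that the Dirichlet condition annihilates the tangential boundary terms, leaving only the normal derivative. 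Applying this to $v=\chi w$ with $\chi$ a cutoff equal to $1$ on a small half-ball $B_0^+=B(x_0,r_0)\cap\mathcal{M}$ and supported in a slightly larger half-ball on which $\sup_{\operatorname{supp}\nabla\chi}\varphi<\inf_{B_0^+}\varphi$, and controlling the first-order commutator $[\Delta_g,\chi]w$ supported in $\operatorname{supp}\nabla\chi$, one gets after dividing by $e^{2\tau\inf_{B_0^+}\varphi}$
\[
\|w\|_{H^1(B_0^+)}\leq Ce^{C\tau}\big(\|\Delta_g w\|_{L^2(\mathcal{V})}+\|\partial_n w\|_{L^2(\mathcal{X})}\big)+Ce^{-\tau/C}\|w\|_{H^1(\mathcal{V})} .
\]
Optimizing over $\tau\geq\tau_0$ (using the trivial bound $\|w\|_{H^1(B_0^+)}\leq\|w\|_{H^1(\mathcal{V})}$ when the first term dominates) yields a local interpolation inequality $\|w\|_{H^1(B_0^+)}\leq C(\,\cdots\,)^{\theta_0}\|w\|_{H^1(\mathcal{V})}^{1-\theta_0}$; in particular $w$ is controlled on some interior ball $B_0\subset\subset\mathcal{M}$ inside $B_0^+$.

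It then remains to spread this control to all of $\mathcal{U}$. For the part of $\mathcal{U}$ staying away from $\partial\mathcal{M}$ I would apply Theorem \ref{theo: Carleman-based propagation of smallness} with observation set $B_0$ on a connected open set exhausting $\mathcal{U}$ up to a thin collar of the boundary; for the part of $\mathcal{U}$ touching $\partial\mathcal{M}$ outside $\mathcal{X}$ I would chain finitely many boundary half-balls along $\mathcal{U}\cap\partial\mathcal{M}$ (possible since $\mathcal{U}$ is connected and $\overline{\mathcal{U}}\subset\subset\mathcal{V}$), using at each step the Dirichlet Carleman estimate above — again the trace condition killing the unwanted term — exactly as in the proof of Theorem \ref{theo: Carleman-based propagation of smallness}. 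Composing the finitely many local inequalities of the form $\|w\|_{H^1}\leq C\,D^{\theta}\|w\|_{H^1(\mathcal{V})}^{1-\theta}$, where $D:=\|\Delta_g w\|_{L^2(\mathcal{V})}+\|\partial_n w\|_{L^2(\mathcal{X})}$, via the elementary observation that $D^{\theta_1}(D^{\theta_2}\|w\|_{H^1(\mathcal{V})}^{1-\theta_2})^{1-\theta_1}$ is again of the form $D^{\theta'}\|w\|_{H^1(\mathcal{V})}^{1-\theta'}$, produces the claimed estimate for some $\beta\in(0,1)$.

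The main obstacle is the first step: setting up the Dirichlet boundary Carleman estimate with a weight whose sign conventions are such that $(a)$ the normal trace of $w$ on $\mathcal{X}$ ends up on the observable side with a controlled power of $\tau$, $(b)$ the tangential boundary terms are genuinely absorbed by $w|_{\partial\mathcal{M}}=0$, and $(c)$ the weight decreases across $\operatorname{supp}\nabla\chi$ so that the localization commutator is swallowed by the $e^{-\tau/C}$-small remainder. Once the interior three-ball inequality (already available through Theorem \ref{theo: Carleman-based propagation of smallness}) and this boundary estimate are both in hand, the chaining argument in the last step is routine bookkeeping.
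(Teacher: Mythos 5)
The paper does not prove this theorem; it imports it verbatim from \cite{lerousseaulebeaurobbiano2022carleman2} (Theorem 9.3), so there is no ``paper's proof'' to compare against — I am evaluating your proposal against the standard textbook argument.

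Your overall plan is the right one: observe from $\mathcal{X}$ via a Dirichlet boundary Carleman estimate, then chain local interpolation inequalities through $\mathcal{U}$. The first step (observation at $\mathcal{X}$) and the interior chaining are set up correctly. There is, however, a genuine gap in the boundary chaining. You write that for ``the part of $\mathcal{U}$ touching $\partial\mathcal{M}$ outside $\mathcal{X}$'' you would ``chain finitely many boundary half-balls \dots using at each step the Dirichlet Carleman estimate above --- again the trace condition killing the unwanted term.'' But the Dirichlet trace condition only annihilates the \emph{tangential} boundary terms; it does \emph{not} kill $\partial_n w$, and the estimate you wrote down has $\|e^{\tau\varphi}\partial_n v\|_{L^2(\{x_n=0\})}$ on the right-hand side. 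On those half-balls the support of the cutoff meets $\partial\mathcal{M}$ outside $\mathcal{X}$, where $\partial_n w$ is not observed, so that term cannot be placed on the observable side.

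What is actually needed there is a second, distinct boundary Carleman estimate: one whose weight $\varphi$ satisfies the opposite sign condition $\partial_n\varphi\le 0$ on the relevant boundary piece (in the appropriate convention), so that the $\partial_n w$ boundary contribution has a \emph{good} sign and is absorbed into the left-hand side with no boundary observation at all. The chain from $\mathcal{X}$ to a point of $\mathcal{U}\cap\partial\mathcal{M}\setminus\overline{\mathcal{X}}$ then alternates between interior estimates, this good-sign boundary estimate, and --- only at $\mathcal{X}$ itself --- the bad-sign estimate carrying the $\partial_n w$ observation. Your closing remark that ``the chaining argument in the last step is routine bookkeeping'' glosses over precisely this point: the good-sign estimate is a different local inequality, with its own weight construction and sign verification, and it is not subsumed by the one you set up at $\mathcal{X}$ nor by Theorem \ref{theo: Carleman-based propagation of smallness}, which is purely interior. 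Once that estimate is stated and proved separately, the rest of your argument goes through.
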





\bibliographystyle{alpha}
\bibliography{bibli}

\end{document}